\documentclass{article}
\pdfoutput=1


\usepackage[preprint]{neurips_2023}




\usepackage[utf8]{inputenc} 
\usepackage[T1]{fontenc}    
\usepackage[pagebackref=true]{hyperref}
\renewcommand*\backref[1]{\ifx#1\relax \else Cited on #1 \fi}
\usepackage{url}            
\usepackage{booktabs}       
\usepackage{amsmath,amsfonts,amssymb,amsthm}
\usepackage{nicefrac}       
\usepackage{microtype}      
\usepackage{graphicx}       
\usepackage{xcolor}
\usepackage[skip=0pt]{subfig}
\usepackage{alphalph}

\usepackage[ruled,vlined,linesnumbered]{algorithm2e}
\usepackage{diagbox}

\usepackage{enumitem}
\usepackage[skip=0pt]{caption}
\setlist[itemize]{topsep=0pt,itemsep=2pt,parsep=0pt,partopsep=0pt,leftmargin=3mm }
\setlist[enumerate]{topsep=0pt,itemsep=0pt,parsep=0pt,partopsep=0pt,leftmargin=0.8\labelwidth}
\setlength{\belowcaptionskip}{0pt}
\setlength{\abovecaptionskip}{1pt}
\setlength{\textfloatsep}{11pt}
\graphicspath{{supplement/}} 

\title{Don't be so Monotone: Relaxing Stochastic\\Line Search in Over-Parameterized Models
}

\makeatother
\DeclareMathOperator*{\argmin}{argmin}

\DeclareMathOperator*{\E}{\mathbb{E}}

\DeclareMathOperator*{\Eik}{\mathbb{E}_{\mathit{i_k}}}

\DeclareMathOperator*{\grad}{\mathit{\nabla \!f}}

\DeclareMathOperator*{\gradik}{\mathit{\nabla\!\fik}}
\DeclareMathOperator*{\Lmax}{\mathit{L_{max}}}
\newcommand{\R}{\mathbb{R}}

\newcommand{\fik}{f_{i_k}}
\newcommand{\fikofwstar}{\fik(w^*)}
\newcommand{\fofwstar}{f(w^*)}

\newcommand{\Lik}{L_{i_k}}

\newcommand{\etamax}{\eta^{\text{max}}}

\newcommand{\etamin}{\eta^{\text{min}}}
\newcommand{\etaminn}{\bar{\eta}^{\text{min}}}
\newcommand{\minimum}[2]{\min \left\{ #1, #2 \right \} }
\newcommand{\maximum}[2]{\max \left\{ #1, #2 \right \} }

\makeatletter

\newtheorem{lemma}{Lemma}

\newtheorem{theorem}{Theorem}

\newcommand{\imgS}{.26}
\newcommand{\dir}{exp1/}
\newcommand{\model}{mlp}
\newcommand{\modelname}{mlp}

\newcommand{\mlp}{{\texttt{mnist|mlp}}}
\newcommand{\res}{{\texttt{cifar10|resnet34}}}
\newcommand{\dense}{{\texttt{cifar10|densenet121}}}
\newcommand{\ress}{{\texttt{cifar100|resnet34}}}
\newcommand{\denses}{{\texttt{cifar100|densenet121}}}
\newcommand{\fashion}{{\texttt{fashion|effb1}}}
\newcommand{\svhn}{{\texttt{svhn|wrn}}}
\newcommand{\wiki}{{\texttt{wiki2|encoder}}}

\newcommand{\mushrooms}{{\texttt{mushrooms}}}
\newcommand{\rcvone}{{\texttt{rcv1}}}
\newcommand{\ijcnn}{{\texttt{ijcnn}}}

%

\author{
	Leonardo Galli, Holger Rauhut \\
	RWTH Aachen University \\
	Aachen\\
	\texttt{\{galli, rauhut\}@mathc.rwth-aachen.de} \\
	\And
	Mark Schmidt \\
	University of British Columbia\\ Canada CIFAR AI Chair (Amii)\\
	\texttt{schmidtm@cs.ubc.ca} \\
}

\begin{document}

	\maketitle

	\begin{abstract}
		
Recent works have shown that line search methods can speed up Stochastic Gradient Descent (SGD) and Adam in modern over-parameterized settings. However, existing line searches may take steps that are smaller than necessary since they require a monotone decrease of the (mini-)batch objective function. We explore nonmonotone line search methods to relax this condition and possibly accept larger step sizes. Despite the lack of a monotonic decrease, we prove the same fast rates of convergence as in the monotone case. Our experiments show that nonmonotone methods improve the speed of convergence and generalization properties of SGD/Adam even beyond the previous monotone line searches. We propose a POlyak NOnmonotone Stochastic (PoNoS) method, obtained by combining a nonmonotone line search with a Polyak initial step size. Furthermore, we develop a new resetting technique that in the majority of the iterations reduces the amount of backtracks to zero while still maintaining a large initial step size. To the best of our knowledge, a first runtime comparison shows that the epoch-wise advantage of line-search-based methods gets reflected in the overall computational time.
	
	\end{abstract}
	
	\section{Introduction}
	
Stochastic Gradient Descent (SGD) \citep{robbins51a} is the workhorse for the whole Deep Learning (DL) activity today. Even though its simplicity and low memory requirements seem crucial for dealing with these huge models, the success of SGD is strongly connected to the choice of the learning rate. In the field of deterministic optimization \citep{nocedal06a}, this problem is addressed \citep{armijo66a} by employing a line search technique to select the step size. Following this approach, a recent branch of research has started to re-introduce the use of line search techniques for training DL models \citep{mahsereci17a,paquette18a,bollapragada18a,truong18a,vaswani19a}. These methods have been shown to speed up SGD \citep{vaswani19a} and Adam \citep{duchi11a,vaswani20a} in the over-parameterized regime. However, the existing stochastic line searches may take step sizes that are smaller than necessary since they require a monotone decrease in the (mini-)batch objective function. In particular, the highly non-linear non-convex landscapes of DL training losses suggest that it may be too restrictive to impose such a condition \citep{grippo86a}. In addition, the stochastic nature of SGD discourages imposing a monotone requirement on a function that is not directly the one we are minimizing \citep{ferris94a} (mini- vs full-batch function). Furthermore, the recent research on the edge of stability phenomenon \citep{cohen20a,nacson22a} shows that the best performance for training DL models with Gradient Descent (GD) is obtained when large step sizes yield a nonmonotone decrease of the loss function. In this paper, we propose the use of nonmonotone line search methods \citep{grippo86a,zhang04a} to possibly accept increases in the objective function and thus larger step sizes.  
\par In parallel with line search techniques, many other methods \citep{baydin17a,luo19a,mutschler20a,truong21a} have been recently developed to automatically select the step size. In \citet{asi19a,berrada20a,loizou21a,gower21a,gower22a,li22a}, the Polyak step size from \citet{polyak69a} has been adapted to SGD and successfully employed to train DL models. However, in this setting, the Polyak step size may become very large and it may lead to divergence or, in more favorable cases, induce the (mini-batch) function to decrease nonmonotonically. Starting from \citet{berrada20a}, the Polyak step size has been upper-bounded to avoid divergence and ``smoothed'' to prevent large fluctuations \citep{loizou21a}. However, this ``smoothing'' technique is a heuristic and, as we will clarify below, may reduce the Polyak step more than necessary. In this paper, we instead combine the Polyak initial step from \citet{loizou21a} with a nonmonotone line search. In fact, nonmonotone methods are well suited for accepting as often as possible a promising (nonmontone) step size, while still controlling its growth. In the deterministic setting, the same optimization recipe has been used in the seminal papers \citet{raydan97a} (for the \citet{barzilai88a} (BB) step size) and \citet{grippo86a} (for the unitary step in the case of Newton).
\par In the over-parameterized regime, the number of free parameters dominates the number of training samples. This provides modern DL models with the ability of exactly fitting the training data. In practice, over-parameterized models are able to reduce to zero the full-batch loss and consequently also all the individual losses. This is mathematically captured by the interpolation assumption, which allows SGD with non-diminishing step sizes to achieve GD-like convergence rates \citep{vaswani19a}. We develop the first rates of convergence for stochastic nonmonotone line search methods under interpolation. In the non-convex case, we prove a linear rate of convergence under the Polyak-Lojasiewicz (PL) condition. In fact in \citet{liu22a}, it has been shown that wide neural networks satisfy a local version of the PL condition, making this assumption more realistic for DL models than the Strong Growth Condition (SGC) \citep{schmidt13a}. 
\par When considering line search methods for training DL models one has to take into account that they require one additional forward step for each backtrack (internal iteration) and each mini-batch iteration requires an unknown amount (usually within 5) of backtracks. A single forward step may require up to one-third the runtime of SGD (see Section E.6 of the Appendix), thus, too many of them may become a computational burden. \citet{leroux12a} proposed a ``resetting'' technique that is able to (almost) always reduce the number of backtracking steps to zero by shrinking the initial step size \citep{vaswani19a}. In this paper, we will show that this technique may have a negative impact on the speed of convergence, even when only employed as a safeguard (as in the ``smoothing'' technique \citep{loizou21a}). As a third contribution, we develop a new resetting technique for the Polyak step size that reduces (almost) always the number of backtracks to zero, while still maintaining a large initial step size. To conclude, we will compare the runtime of different algorithms to show that line search methods equipped with our new technique outperform non-line-search-based algorithms.

	\section{Related Works}
	The first nonmonotone technique was proposed by \citet{grippo86a} to globalize the Newton method without enforcing a monotonic decrease of the objective function. After that, nonmonotone techniques have been employed to speed up various optimization methods by relaxing this monotone requirement. A few notable examples are the spectral gradient in \citet{raydan97a}, the spectral projected gradient in \citet{birgin00a}, sequential quadratic programming methods in \citet{zhou93a}, and the Polak-Ribière-Polyak conjugate gradient in \citet{zhou13a}.

In the pre-deep-learning era \citep{plagianakos02a}, shallow neural networks have been trained by combining the nonmonotone method \citep{grippo86a} with GD. In the context of speech recognition \citep{keskar15a}, one fully-connected neural network has been trained with a combination of a single-pass full-batch nonmonotone line search \citep{grippo86a} and SGD. The Fast Inertial Relaxation Engine (FIRE) algorithm is combined with the nonmonotone method \citep{zhang04a} in \citet{wang19a,wang21a} for solving sparse optimization methods and training logistic regression models, while a manually adapted learning rate is employed to train DL models. In \citet{krejic15a,krejic19,bellavia21a}, the nonmonotone line search by \citet{li99a} is used together with SGD to solve classical optimization problems. In concurrent work \citep{hafshejani23a}, the nonmonotone line search by \citet{grippo86a} has been adapted to SGD for training small-scale kernel models. This line search maintains a nonmonotone window $W$ (usually 10) of weights in memory and it computes the current mini-batch function value on all of them at every iteration. The method proposed in \citet{hafshejani23a} is computationally very expensive and impractical to train modern deep learning models due to the need to store previous weights. In this paper, we instead propose the first stochastic adaptation of the nonmonotone line search method by \citet{zhang04a}. Thanks to this line search, no computational overhead is introduced in computing the nonmonotone term which is in fact a linear combination of the previously computed mini-batch function values. To the best of our knowledge, we propose the first stochastic nonmonotone line search method to train modern convolutional neural networks and transformers \citep{vaswani17a}.

Our paper is the first to combine a line search technique with the Stochastic Polyak Step size (SPS) \citep{loizou21a}. In fact, the existing line search methods \citep{vaswani19a,paquette18a,mahsereci17a} do not address the problem of the selection of a suitable initial step size. In \citet{vaswani19a}, the authors focus on reducing the amount of backtracks and propose a ``resetting'' technique \eqref{eq:etamax_paper} that ends up also selecting the initial step size. In this paper, we consider these two problems separately and tackle them with two different solutions: a Polyak initial step size and a new resetting technique \eqref{eq:new_etak_paper} to reduce the amount of backtracks. Regarding the latter, a similar scheme was presented in \citet{grapiglia21a}, however we modify it to be combined with an independent initial step size (e.g., Polyak). This modification allows the original initial step size not to be altered by the resetting technique.

In this paper, we extend \citet{vaswani19a} by modifying the monotone line search proposed there with a nonmonotone line search. Our theoretical results extend the theorems presented in \citet{vaswani19a} to the case of stochastic nonmonotone line search methods. Previous results in a similar context were given by \citet{krejic15a,krejic19,bellavia21a} that assumed 1) the difference between the nonmonotone and monotone terms to be geometrically converging to 0 and 2) the batch-size to be geometrically increasing. In this paper, we replace both hypotheses with a weaker assumption (i.e., interpolation) \citep{meng20a} and we actually prove that the nonmonotone and monotone terms converge to the same value. In \citet{hafshejani23a}, related convergence theorems are proved under the interpolation assumption for the use of a different nonmonotone line search (i.e., \citet{grippo86a} instead of \citet{zhang04a}). However in \citet{hafshejani23a}, no explanation is given on how to use an asymptotic result (Lemma 4) for achieving their non-asymptotic convergence rates (Theorem 1 and 2). Addressing this problem represents one of the main challenges of adapting the existing nonmonotone theory \citep{grippo86a,dai02a,zhang04a} to the stochastic case.

In \citet{vaswani19a}, the rates provided in the non-convex case are developed under an SGC, while we here assume the more realistic PL condition \citep{liu22a}. In fact in \citet{liu22a}, not only wide networks are proven to satisfy PL but also networks with skip connections (e.g., ResNet \citep{he16a}, DenseNet \citep{huang17a}). For this result, we extend Theorem 3.6 of \citet{loizou21a} for stochastic nonmonotone line search methods by exploiting our proof technique. In \citet{loizou21a}, various rates are developed for SGD with a Polyak step size and it is possible to prove that they hold also for PoNoS. In fact, the step size yielded by PoNoS is by construction less or equal than Polyak's. However, the rates presented in this paper are more general since they are developed for nonmonotone line search methods and do not assume the use of any specific initial step size (e.g., they hold in combination with a BB step size).

	\section{Methods}\label{sec:method}
	Training machine learning models (e.g., neural networks) entails solving the finite sum problem $\min_{w\in \mathbb{R}^n} f(w) = \frac{1}{M}\sum_{i=1}^{M} f_i(w),$ where $w$ is the parameter vector and $f_i$ corresponds to a single instance of the $M$ points in the training set. We assume that $f$ is lower-bounded by some value $f^*$, that $f$ is $L$-smooth and that it either satisfies the PL condition, convexity or strong-convexity. Vanilla SGD can be described by the step $ w_{k+1} = w_k - \eta \gradik (w_k),$ where $i_k \in \{1,\dots, M\}$ is one instance randomly sampled at iteration $k$, $\gradik(w_k)$ is the gradient computed only w.r.t.\ the $i_k$-th instance and $\eta>0$ is the step size. The mini-batch version of this method modifies $i_k$ to be a randomly selected subset of instances, i.e., $i_k\subset \{1,\dots, M\}$, with $\gradik(w_k)$ being the averaged gradient on this subset and $|i_k| = b$ the mini-batch size. Through the whole paper we assume that each stochastic function $\fik$ and gradient $\gradik(w)$ evaluations are unbiased, i.e., $\Eik [\fik(w)] = f(w)$ and $\Eik [\gradik(w)] = \grad(w), \;\; \forall w \in \R^n$. Note that $\Eik$ represents the conditional expectation w.r.t. $w_k$, i.e., $\Eik [\cdot] = \E [\cdot| w_k]$. In other words, $\Eik$ is the expectation computed w.r.t. $\nu_k$, that is the random variable associated with the selection of the sample (or subset) at iteration $k$ (see \citet{bottou18a} for more details).
\par We interpret the over-parameterized setting in which we operate to imply the interpolation property, i.e., let $w^* \in \displaystyle\argmin_{w\in \R^n} f(w),$ then $w^* \in \displaystyle\argmin_{w\in \R^n} f_i(w)\;\forall 1\leq i\leq M$. This property is crucial for the convergence results of SGD-based methods because it can be combined either with the Lipschitz smoothness of $\fik$ or with a line search condition to achieve the bound $\Eik\| \gradik (w_k)\|^2 \leq a \left(f(w_k)-f(w^*)\right)$ with $a>0$ (see Lemma 4 of the Appendix for the proof). This bound on the variance of the gradient results in a r.h.s. which is independent of $i_k$ and, in the convex and strongly convex cases, it can be used to replace the SGC (see \citet{fan23a} for more details).

As previously stated, our algorithm does not employ a constant learning rate $\eta$, but the step size $\eta_k$ is instead determined at each iteration $k$ by a line search method. Given an initial step size $\eta_{k,0}$ and $\delta\in(0,1)$, the Stochastic Line Search (SLS) condition \citep{vaswani19a} select the smallest $l_k\in\mathbb{N}$ such that $\eta_k = \eta_{k,0} \delta^{l_k}$ satisfies the following condition 
\begin{equation}\label{eq:armijo_paper}
	\fik(w_k - \eta_k \gradik(w_k)) \leq \fik(w_k) -c \eta_k \| \gradik(w_k) \|^2,
\end{equation}
where $c\in(0,1)$ and $\|\cdot\|$ is the Euclidean norm. Each internal step of the line search is called backtrack since the procedure starts with the largest value $\eta_{k,0}$ and reduces (cuts) it until the condition is fulfilled. Note that \eqref{eq:armijo_paper} requires a monotone decrease of $\fik$. In this paper, we follow \citet{zhang04a} and propose to replace $\fik$ with a nonmonotone term $C_k$,
\begin{equation}\label{eq:zhang_paper}
	\begin{split}
		&\fik(w_k - \eta_k \gradik(w_k)) \leq C_k -c \eta_k \| \gradik(w_k) \|^2,\\
		C_k=\max &\left\{\tilde{C}_k; \fik(w_k) \right\}, \; \tilde{C}_k = \frac{\xi Q_k C_{k-1} + \fik(w_k)}{Q_{k+1}}, \; Q_{k+1} = \xi Q_k + 1,
	\end{split}
\end{equation}
where $\xi\in[0,1]$, $C_0=Q_0=0$ and $C_{-1} = f_{i_0}(w_0)$. The value $\tilde{C}_k$ is a linear combination of the previously computed function values $f_{i_0}(w_0), \dots, \fik(w_k)$ and it ranges between the strongly nonmonotone term $ \frac{1}{k}\sum_{j=0}^{k} f_{i_j}(w_j)$ (with $\xi=1$) and the monotone value $\fik(w_k)$ (with $\xi=0$). The maximum in $C_k$ is needed to ensure the use of a value that is always greater or equal than the monotone term $\fik(w_k)$. The activation of $\fik(w_k)$ instead of $\tilde{C}_k$ happens only in the initial iterations and it is rare in deterministic problems, but becomes more common in the stochastic case. The computation of $C_k$ does not introduce overhead since the linear combination is accumulated in $C_k$. 
\par Given $\gamma>1$, the initial step size employed in \citet{vaswani19a} is the following
\begin{equation}\label{eq:etamax_paper}
	\eta_{k,0} = \min \{\eta_{k-1}  \gamma^{b/M}, \etamax \}.
\end{equation}
In this paper, we propose to use the following modified version of SPS \citep{loizou21a}
\begin{equation}\label{eq:loizou_paper}
\eta_{k,0} = \min \left\{ \tilde{\eta}_{k,0}, \etamax \right\} \quad \text{ with }  \tilde{\eta}_{k,0}:=\frac{\fik(w_k) - \fik^*}{c_p||\gradik(w_k)||^2} \quad \text{ and } c_p\in(0,1).
\end{equation}
In \citet{loizou21a}, \eqref{eq:loizou_paper} is not directly employed, but equipped with the same resetting technique \eqref{eq:etamax_paper} to control the growth of $\tilde{\eta}_{k,0}$, i.e. $\eta_{k,0} = \min \left\{ \tilde{\eta}_{k,0}, \eta_{k-1} \gamma^{b/M}, \etamax \right\}.$ 
\par Given $l_{k-1}$ the amount of backtracks at iteration $k-1$, our newly introduced resetting technique stores this value for using it in the following iteration. In particular, to relieve the line search at iteration $k$ from the burden of cutting the new step size $l_{k-1}$ times, the new initial step size is pre-scaled by $\delta^{l_{k-1}}$. In fact, $l_{k-1}$ is a good estimate for $l_k$ (see Section E.3 of the Appendix). However, to allow the original $\eta_{k,0}$ to be eventually used non-scaled, we reduce $l_{k-1}$ by one. We thus redefine $\eta_k$ as
\begin{equation}\label{eq:new_etak_paper}
	\eta_k =   \eta_{k,0} \delta^{\bar{l}_k} \delta^{l_k},\quad \text { with } \; \bar{l}_{k}:= \max \{\bar{l}_{k-1} + l_{k-1} -1, 0\}.
\end{equation}
In Section \ref{sec:experiments}, our experiments show that thanks to \eqref{eq:new_etak_paper} we can save many backtracks and reduce them to zero in the majority of the iterations. At the same time, the original $\eta_{k,0}$ is not modified and the resulting step size $\eta_k$ is always a scaled version of $\eta_{k,0}$. Moreover, the presence of the $-1$ in \eqref{eq:new_etak_paper} keeps pushing $\delta^{\bar{l}_k}$ towards zero, so that the initial step size is not cut more than necessary. Note that \eqref{eq:new_etak_paper} can be combined with any initial step size and it is not limited to the case of \eqref{eq:loizou_paper}.

	\section{Rates of Convergence}\label{sec:theory}
	We present the first rates of convergence for nonmonotone stochastic line search methods under interpolation (see Section B of the Appendix for the proofs). The three main theorems are given under strong-convexity, convexity and the PL condition. Our results do not prove convergence only for PoNoS, but more generally for methods employing \eqref{eq:zhang_paper} and any bounded initial step size, i.e.
\begin{equation}\label{eq:generic_eta_short}
	\eta_{k,0} \in [\etaminn, \etamax] , \quad \text{ with } \etamax>\bar{\eta}^{\text{min}}> 0.
\end{equation}
Many step size rules in the literature fulfill the above requirement \citep{berrada20a,loizou21a,liang19a} since when applied to non-convex DL models the step size needs to be upper bounded to achieve convergence \citep{bottou18a,berrada20a}.

In Theorem \ref{thm:strongly_convex_short} below, we show a linear rate of convergence in the case of a strongly convex function $f$, with each $\fik$ being convex. We are able to recover the same speed of convergence of Theorem 1 in \citet{vaswani19a}, despite the presence of the nonmonotone term $C_k$ instead of $\fik(w_k)$. The challenges of the new theorem originate from studying the speed of convergence of $C_k$ to $f(w^*)$ and from combining the two interconnected sequences $\{w_k\}$ and $\{C_k\}$. It turns out that if $\xi$ is small enough (i.e., such that $b<(1-\etamin\mu)$), then the presence of the nonmonotone term does not alter the speed of convergence of $\{w_k\}$ to $w^*$, not even in terms of constants.

\begin{theorem}\label{thm:strongly_convex_short}
	Let $C_k$ and $\eta_k$ be defined in \eqref{eq:zhang_paper}, with $\eta_{k,0}$ defined in \eqref{eq:generic_eta_short}. We assume interpolation, $\fik$ convex, $f$ $\mu$-strongly convex and $\fik$ $\Lik$-Lipschitz smooth. Assuming $c> \frac{1}{2}$ and $\xi< \frac{1}{\left(1 + \frac{\etamax}{\etamin\left(2c - 1\right)}\right)}$, we have
	\begin{equation*}
		\begin{split}
			\E \left[\| w_{k+1} - w^* \|^2 + a ( C_k - \fofwstar) \right] & \leq d^{k} \left(\| w_0 - w^* \|^2 + a \left(f(w_0) - \fofwstar\right)\right),
		\end{split}
	\end{equation*}
	where $d:=\maximum{(1-\etamin\mu)}{b}\in(0,1)$, $b:=\left(1+\frac{\etamax}{ac}\right)\xi\in(0,1)$, $a:= \etamin\left(2-\frac{1}{c}\right)>0$ with $\etamin:=\min\{ \frac{2\delta(1-c)}{\Lmax},\etaminn \}$.
\end{theorem}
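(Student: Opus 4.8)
The plan is to track the Lyapunov potential $\Phi_k := \|w_{k+1}-w^*\|^2 + a\,(C_k - \fofwstar)$ and to establish a one-step contraction $\Eik[\Phi_k] \le d\,\Phi_{k-1}$ in conditional expectation, after which unrolling the recursion together with $\E[C_{-1}] = \E[\fii{0}(w_0)] = f(w_0)$ for the base term delivers the stated bound. The difficulty relative to the monotone analysis of \citet{vaswani19a} is that $\{w_k\}$ and $\{C_k\}$ are coupled: the line search \eqref{eq:zhang_paper} compares $\fik(w_{k+1})$ against $C_k$ rather than $\fik(w_k)$, so I must simultaneously control the distance $\|w_k-w^*\|^2$ and the nonmonotone gap $C_k-\fofwstar$, and show that the extra slack introduced by $C_k$ is damped geometrically once $\xi$ is small.

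First I would record the step-size lower bound $\eta_k\ge\etamin$. Since $C_k\ge\fik(w_k)$ by construction, condition \eqref{eq:zhang_paper} is implied by the monotone Armijo condition \eqref{eq:armijo_paper}, which $\Lik$-smoothness guarantees as soon as $\eta_k\le 2(1-c)/\Lik$; backtracking by $\delta$ together with \eqref{eq:generic_eta_short} then yields $\eta_k\ge\etamin$ (the nonmonotone analogue of the appendix lemma, the $\max$ making the condition only easier to satisfy). Next I expand $\|w_{k+1}-w^*\|^2 = \|w_k-w^*\|^2 - 2\eta_k\langle\gradik(w_k),w_k-w^*\rangle + \eta_k^2\|\gradik(w_k)\|^2$, bound the last term via \eqref{eq:zhang_paper} by $\tfrac{\eta_k}{c}(C_k-\fik(w_{k+1}))\le\tfrac{\eta_k}{c}(C_k-\fikofwstar)$ using interpolation ($\fik(w_{k+1})\ge\fik(w^*)$), split $C_k-\fikofwstar=(C_k-\fik(w_k))+(\fik(w_k)-\fikofwstar)$, and apply convexity of $\fik$ to the cross term. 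This produces
\[\|w_{k+1}-w^*\|^2 \le \|w_k-w^*\|^2 - \eta_k\big(2-\tfrac1c\big)\langle\gradik(w_k),w_k-w^*\rangle + \tfrac{\eta_k}{c}\big(C_k-\fik(w_k)\big),\]
after which $\eta_k\ge\etamin$, $\eta_k\le\etamax$ and $c>\tfrac12$ introduce the constants $a=\etamin(2-\tfrac1c)$ and $\etamax/c$.

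The heart of the argument is the recursion for $C_k$: a short computation gives $C_k=\fik(w_k)+\lambda_k[C_{k-1}-\fik(w_k)]_+$ with $\lambda_k=\tfrac{\xi Q_k}{\xi Q_k+1}\le\xi$ (the bound because $Q_k=\tfrac{1-\xi^k}{1-\xi}\le\tfrac{1}{1-\xi}$). Adding $a(C_k-\fofwstar)$ to the descent bound, taking $\Eik$, and using $\Eik[\fikofwstar]=\fofwstar$ and $\Eik[\fik(w_k)]=f(w_k)$, the function-gap terms $\pm a(f(w_k)-\fofwstar)$ cancel, while strong convexity of $f$ in the form $\langle\grad(w_k),w_k-w^*\rangle\ge(f(w_k)-\fofwstar)+\tfrac{\mu}{2}\|w_k-w^*\|^2$ leaves a surviving $-\tfrac{a\mu}{2}\|w_k-w^*\|^2$, furnishing the $\|w_k-w^*\|^2$-contraction recorded in $d$. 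The remaining nonmonotone excess is $\beta\lambda_k\,\Eik[[C_{k-1}-\fik(w_k)]_+]$ with $\beta=a+\tfrac{\etamax}{c}$. Here I would invoke the key estimate $\Eik[[C_{k-1}-\fik(w_k)]_+]\le C_{k-1}-\fofwstar$, which holds because interpolation gives $\fik(w_k)\ge\fik(w^*)=\fofwstar$ (individual minima coinciding with $\fofwstar$ in the over-parameterized regime) and hence $C_{k-1}\ge\fii{k-1}(w_{k-1})\ge\fofwstar$; since $\beta\xi=a\,b$, this contributes at most $a\,b\,(C_{k-1}-\fofwstar)$, so $\Eik[\Phi_k]\le\max\{1-\etamin\mu,\,b\}\,\Phi_{k-1}$.

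I expect the main obstacle to be precisely the control of the nonmonotone term, the point flagged in the introduction. Three ingredients must be handled with care: (i) establishing $C_{k-1}\ge\fofwstar$ and the positive-part estimate above, which is what converts an \emph{asymptotic} nonmonotone quantity into a \emph{geometric} one-step bound; (ii) fixing the potential weight to be exactly $a$, so that the function-gap terms cancel and only the strong-convexity distance term and the $\xi$-scaled excess survive; and (iii) verifying $b<1$, which is equivalent to the stated threshold $\xi<1/(1+\etamax/(\etamin(2c-1)))$ and is what makes $C_k\to\fofwstar$ at a geometric rate rather than merely keeping it bounded. The convexity and interpolation bookkeeping in the descent is routine; the delicate part is the joint contraction of the two coupled sequences through this cancellation.
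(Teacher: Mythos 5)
Your overall strategy coincides with the paper's: the same Lyapunov potential $\|w_{k+1}-w^*\|^2+a(C_k-\fofwstar)$, the same step-size lower bound $\eta_k\geq\etamin$, the same bounds $\xi Q_k/(\xi Q_k+1)\leq\xi$, and the same use of $C_{k-1}-\fik(w^*)\geq 0$ (the paper's induction lemma) to turn the nonmonotone excess into $a\,b\,(C_{k-1}-\fofwstar)$; your identity $C_k=\fik(w_k)+\lambda_k[C_{k-1}-\fik(w_k)]_+$ is a clean way to merge the paper's two-case analysis ($C_k=\fik(w_k)$ versus $C_k=\tilde{C}_k$) into one line, and your verification that $b<1$ is exactly the stated threshold on $\xi$ is correct.

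There is, however, one concrete gap: your bookkeeping does not deliver the stated constant $d=\maximum{(1-\etamin\mu)}{b}$. By invoking convexity of $\fik$ \emph{inside} the descent inequality to absorb $\frac{\eta_k}{c}(\fik(w_k)-\fikofwstar)$ into the inner product, you reduce its coefficient to $-\eta_k(2-\frac{1}{c})$, which after $\eta_k\geq\etamin$ becomes $-a$. The potential then contributes $+a(f(w_k)-\fofwstar)$, so strong convexity is applied with weight $a$ and the surviving term is $-\frac{a\mu}{2}\|w_k-w^*\|^2$, i.e., a contraction factor $1-\frac{a\mu}{2}=1-\etamin\mu\left(1-\frac{1}{2c}\right)$. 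Since $1-\frac{1}{2c}<1$ this is strictly worse than $1-\etamin\mu$, and it degenerates to $1$ as $c\downarrow\frac{1}{2}$, so you prove linear convergence but not the theorem as stated. The paper avoids this by keeping the inner-product coefficient at $-2\eta_k$, observing that the whole bracket $-2\langle\gradik(w_k),w_k-w^*\rangle+\frac{1}{c}(\fik(w_k)-\fikofwstar)$ is nonpositive (convexity plus $c>\frac{1}{2}$) so that $\eta_k\geq\etamin$ can be applied to it as a block, and then exploiting the exact identity $\frac{a}{\etamin}+\frac{1}{c}=2$ so that the function-gap coefficient also equals $2\etamin$; strong convexity with weight $2\etamin$ then yields $-\etamin\mu\|w_k-w^*\|^2$. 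Reordering your argument this way repairs the constant without touching anything else. A minor further point: your positive-part estimate should be routed as $[C_{k-1}-\fik(w_k)]_+\leq C_{k-1}-\fik(w^*)$ pointwise followed by $\Eik[\fik(w^*)]=\fofwstar$, rather than asserting $\fik(w^*)=\fofwstar$ for each $i_k$, which the stated interpolation assumption does not literally give (though the paper itself is equally casual on this point).
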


Comparing the above result with the corresponding deterministic rate (Theorem 3.1 from \citet{zhang04a}), we notice that the constants of the two rates are different. In particular, the proof technique employed in Theorem 3.1 from \citet{zhang04a} cannot be reused in Theorem \ref{thm:strongly_convex_short} since a few bounds are not valid in the stochastic case (e.g., $\|\grad (w_{k+1})\| \leq \| \grad (w_k) \|$ or $C_{k+1} \leq C_k - \| \gradik (w_k) \|$ ). The only common aspect in these proofs is the distinction between the two different cases defining $C_k$ (i.e., being $\tilde{C}_k$ or $\fik(w_k)$). Moreover, in both theorems $\xi$ needs to be small enough to allow the final constant to be less than 1.
\par In Theorem \ref{thm:convex_short} we show a $O(\frac{1}{k})$ rate of convergence in the case of a convex function $f$. Interestingly, in both Theorems \ref{thm:strongly_convex_short} and \ref{thm:convex_short} (and also in the corresponding monotone versions from \citet{vaswani19a}), $c$ is required to be larger than $\frac{1}{2}$. The same lower bound is also required for achieving Q-super-linear convergence for the Newton method in the deterministic case, but it is often considered too large in practice and the default value (also in the case of first-order methods) is $0.1$ or smaller \citep{nocedal06a}. In this paper, we numerically tried both values and found out that $c=0.1$ is too small since it may indeed lead to divergence (see Section E.1 of the Appendix).
\begin{theorem}\label{thm:convex_short}
	Let $C_k$ and $\eta_k$ be defined in \eqref{eq:zhang_paper}, with $\eta_{k,0}$ defined in \eqref{eq:generic_eta_short}. We assume interpolation, $f$ convex and $\fik$ $\Lik$-Lipschitz smooth. Given a constant $a_1$ such that $0 < a_1 < \left(2 - \frac{1}{c}\right)$ and assuming $c> \frac{1}{2}$ and $\xi< \frac{a_1}{2}$, we have
	\begin{equation*}
		\begin{split}
			\E \left[f(\bar{w}_k) - \fofwstar\right] \leq \frac{d_1}{k}\left(\frac{1}{\etamin}\| w_0 - w^* \|^2 + a_1\left(f(w_0) -f(w^*) \right)\right),\\
		\end{split}
	\end{equation*}
	where $\bar{w}_k\!=\!\frac{1}{k}\sum_{j=0}^{k} w_j$, $d_1\!:=\!\frac{c}{c(2 - a_1) -1}\!>\!0$, $b_1\!:=\!\left(1\!+\!\frac{1}{a_1c}\right) \xi \in(0,1], \etamin\!:=\!\min\{ \frac{2\delta(1-c)}{\Lmax},\etaminn \}$.
\end{theorem}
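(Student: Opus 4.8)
The plan is to expand the squared distance $\|w_k-w^*\|^2$ along one SGD step, turn the cross term into a function gap by convexity, absorb the squared-gradient term through the nonmonotone line search \eqref{eq:zhang_paper}, and finally pass from the per-iteration estimate to the average $\bar{w}_k$ by Jensen's inequality; the genuinely new work is the control of the nonmonotone term $C_k$. \emph{Step-size bounds.} First I would establish $\etamin\le\eta_k\le\etamax$: the upper bound is immediate from \eqref{eq:generic_eta_short} since backtracking only shrinks the step, and for the lower bound I use that $C_k\ge\fik(w_k)$ (the maximum in \eqref{eq:zhang_paper}) makes the nonmonotone test easier than the ordinary Armijo test, which $\Lik$-smoothness guarantees for every $\eta\le 2(1-c)/\Lik$. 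Hence backtracking stops with $\eta_k\ge 2\delta(1-c)/\Lmax$, and together with the un-backtracked case $\eta_k=\eta_{k,0}\ge\etaminn$ this gives $\eta_k\ge\etamin$.

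\emph{One-step inequality.} Expanding $w_{k+1}=w_k-\eta_k\gradik(w_k)$ and dividing the resulting identity by $\eta_k$ gives
\begin{equation*}
2\langle\gradik(w_k),w_k-w^*\rangle=\frac{\|w_k-w^*\|^2-\|w_{k+1}-w^*\|^2}{\eta_k}+\eta_k\|\gradik(w_k)\|^2.
\end{equation*}
On the left I use convexity together with interpolation (so that $w^*$ minimizes each $\fik$) to obtain $\langle\gradik(w_k),w_k-w^*\rangle\ge\fik(w_k)-\fikofwstar\ge0$; the nonnegativity is what lets me replace the random $\eta_k$ by its lower bound $\etamin$ and so neutralize the coupling between $\eta_k$ and the sample $i_k$. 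On the right, dividing by $\eta_k$ leaves the clean line-search bound $\eta_k\|\gradik(w_k)\|^2\le\frac{1}{c}(C_k-\fik(w_{k+1}))$ with no residual factor of $\eta_k$, which is precisely why the final constants will not carry the ratio $\etamax/\etamin$. Taking the conditional expectation $\Eik$ and using unbiasedness then produces a per-iteration estimate whose descent gain is $2\,\Eik[f(w_k)-\fofwstar]$ and whose error is $\frac1c\Eik[C_k-\fik(w_{k+1})]$.

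\emph{Controlling the nonmonotone term.} The crux is to dominate $\sum_k\E[C_k-\fik(w_{k+1})]$ by the descent sum plus a boundary term, and two mismatches must be resolved. First, the line search delivers $\fik(w_{k+1})$ (sample $i_k$), whereas the recursion for $C_{k+1}$ uses $\fikplus(w_{k+1})$ (sample $i_{k+1}$); I bridge this by the pointwise interpolation bound $\fik(w_{k+1})\ge\fik(w^*)$, which after expectation gives $\E[\fik(w_{k+1})]\ge\fofwstar$, hence $\E[C_k-\fik(w_{k+1})]\le\E[C_k-\fofwstar]$. Second, to bound $\sum_k\E[C_k-\fofwstar]$ I would unroll the recursion, writing $\tilde{C}_k-\fofwstar=\big(\sum_{j=0}^{k}\xi^{k-j}(\fii{j}(w_j)-\fofwstar)\big)\big/\sum_{j=0}^{k}\xi^{j}$ as a convex combination of past gaps, handle the alternative case $C_k=\fik(w_k)$ through the maximum, and swap the order of summation so the geometric weights collapse. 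Mirroring the potential $\|w_{k+1}-w^*\|^2+a(C_k-\fofwstar)$ used in the strongly convex case, I would carry a multiple $a_1$ of the nonmonotone gap alongside the squared distance, so that the boundary contribution (whose expectation equals $f(w_0)-\fofwstar$ via $C_{-1}=f_{i_0}(w_0)$ and interpolation) yields the $a_1(f(w_0)-\fofwstar)$ term and simultaneously sharpens the net descent coefficient. The hypothesis $\xi<a_1/2$, equivalently $b_1=(1+\frac{1}{a_1c})\xi\le 1$, is exactly what keeps the accumulated nonmonotone mass dominated by the descent.

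\emph{Assembling the rate.} Summing over $k=0,\dots,K-1$ and discarding $\E\|w_K-w^*\|^2\ge0$, the distance differences contribute $\frac{1}{\etamin}\|w_0-w^*\|^2$, and after the constant bookkeeping the surviving inequality takes the form $\frac{1}{d_1}\sum_{k=0}^{K-1}\E[f(w_k)-\fofwstar]\le\frac{1}{\etamin}\|w_0-w^*\|^2+a_1(f(w_0)-\fofwstar)$, where the net coefficient $1/d_1=(2-a_1)-\frac1c$ is positive precisely because $a_1<2-\frac1c$, fixing $d_1=c/(c(2-a_1)-1)$. Dividing by $k$ and applying Jensen to the convex $f$, $f(\bar{w}_k)\le\frac1k\sum_{j=0}^{k}f(w_j)$, then yields the claimed $O(1/k)$ bound. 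I expect the main obstacle to be the third step: converting the merely asymptotic fact that $C_k$ and the monotone term $\fik(w_k)$ approach the same value into a quantitative, telescoping estimate compatible with a finite-time rate, while simultaneously coping with the randomness of $\eta_k$, the sample mismatch bridged above, and the two-case definition of $C_k$ through the maximum. This is exactly the difficulty that the deterministic analysis of \citet{zhang04a} sidesteps (there one may use $\|\nabla f(w_{k+1})\|\le\|\nabla f(w_k)\|$ and $C_{k+1}\le C_k$, neither of which survives the stochastic setting), and it is where I would concentrate the care.
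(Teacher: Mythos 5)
Your overall route is the same as the paper's: expand $\|w_{k+1}-w^*\|^2$, absorb $\eta_k^2\|\gradik(w_k)\|^2$ via the line search into $\tfrac{\eta_k}{c}(C_k-\fik(w_{k+1}))\le\tfrac{\eta_k}{c}(C_k-\fikofwstar)$, carry the potential $\|w_{k+1}-w^*\|^2+\eta_k a_1(C_k-\fikofwstar)$, split on the two branches of the maximum defining $C_k$, use $\tfrac{\xi Q_k}{\xi Q_k+1}\le\xi$ and $C_{k-1}-\fikofwstar\ge 0$ to arrive in both branches at a common bound with coefficient $b_1=(1+\tfrac{1}{a_1c})\xi<1$, then telescope, divide by $k$, and apply Jensen. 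The constants $d_1$ and the roles of $a_1<2-\tfrac1c$ and $\xi<\tfrac{a_1}{2}$ come out as in the paper. Your alternative suggestion of unrolling $\tilde C_k$ into a geometric combination and swapping sums is not what the paper does, but you also propose the potential-function version, which is the one that actually closes.

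One step as written would fail under the stated hypotheses. You invoke ``convexity together with interpolation'' to get the pointwise bound $\langle\gradik(w_k),w_k-w^*\rangle\ge\fik(w_k)-\fikofwstar\ge 0$. That inequality needs each $\fik$ to be convex, which Theorem~\ref{thm:convex_short} does not assume (contrast Theorem~\ref{thm:strongly_convex_short}, which explicitly adds ``$\fik$ convex''); here only the aggregate $f$ is convex. The paper avoids this by first taking $\Eik$, so that $\Eik\langle\gradik(w_k),w_k-w^*\rangle=\langle\grad(w_k),w_k-w^*\rangle\ge f(w_k)-\fofwstar$ by convexity of $f$ alone. Since you hang the decoupling of the random $\eta_k$ from $i_k$ on that pointwise nonnegativity, you should replace this with the expectation-first argument. (Separately, note that nonnegativity of $\langle\gradik(w_k),w_k-w^*\rangle$ would not by itself give $\|w_k-w^*\|^2-\|w_{k+1}-w^*\|^2\ge 0$, which is what one formally needs to replace $1/\eta_j$ by $1/\etamin$ inside the telescoping sum; the paper's own proof passes over this with the same level of informality, so this part is a shared looseness rather than a defect specific to your write-up.) You also lean implicitly on the induction lemma $C_{k}\ge\fik(w^*)$, $C_{k-1}\ge\fik(w^*)$ both to insert the free $a_1b_1(C_{k-1}-\fikofwstar)$ term in the first branch and to discard the terminal $C_k$-gap; make that dependence explicit.
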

In Theorem \ref{thm:pl_short} below we prove a linear convergence rate in the case of $f$ satisfying a PL condition. We say that a function $f:\mathbb{R}^n\to \mathbb{R}$ satisfies the PL condition if there exists $\mu>0$ such that, $\forall w \in \mathbb{R}^n: \| \grad(w)\|^2 \geq 2\mu (f(w)-\fofwstar)$. The proof of this theorem extends Theorem 3.6 of \citet{loizou21a} to the use of a stochastic nonmonotone line search. Again here, the presence of a nonmonotone term does not modify the constants controlling the speed of convergence ($a_2$ below can be chosen arbitrarily small) as long as $\xi$ is small enough. The conditions on $c$ and on $\etamax$ are the same as those of Theorem 3.6 of \citet{loizou21a} (see the proof).

\begin{theorem}\label{thm:pl_short}
	Let $C_k$ and $\eta_k$ be defined in \eqref{eq:zhang_paper}, with $\eta_{k,0}$ defined in \eqref{eq:generic_eta_short}. We assume interpolation, the PL condition on $f$ and that $\fik$ are $\Lik$-Lipschitz smooth. Given $0<a_2:=\frac{4\mu c(1-c) -\Lmax}{4\delta c (1-c)}+\frac{1}{2\etamax}$ and assuming $\frac{2\delta(1-c)}{\Lmax}<\etaminn,\etamax<\frac{2\delta c(1-c)}{\Lmax -4\mu c(1-c)}$, $\frac{\Lmax}{4\mu}<c<1$ and $\xi<\frac{a_2c}{a_2c + \Lmax}$, we have
	\begin{equation*}
		\begin{split}
			\E \left[f(w_{k+1}) - \fofwstar + a_2\etamax (C_k - \fofwstar) \right] 
			&\leq d_2^k \left(1+a_2\etamax\right) \left(f(w_0) - \fofwstar\right)
		\end{split}
	\end{equation*}
	where $d_2\!:=\!\minimum{\nu}{b_2}\!\in\!(0,1),\,\nu\!:=\!\etamax (\frac{\Lmax -4\mu c(1-c)}{2\delta c(1-c)}+a_2)\!\in\!(0,1), \, b_2\!:=\!\left(1\!+\!\frac{\Lmax}{a_2c}\right) \xi\!\in\!(0,1)$.
\end{theorem}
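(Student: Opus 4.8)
The plan is to mirror the coupled-sequence analysis that drives Theorems \ref{thm:strongly_convex_short} and \ref{thm:convex_short}, but now replace strong convexity by the PL inequality and track the Lyapunov-type quantity $f(w_{k+1}) - \fofwstar + a_2\etamax(C_k - \fofwstar)$. First I would establish a per-iteration descent bound on $f(w_{k+1})$ in terms of $\|\gradik(w_k)\|^2$. Since the line search \eqref{eq:zhang_paper} guarantees $\fik(w_{k+1}) \leq C_k - c\eta_k\|\gradik(w_k)\|^2$ and interpolation together with $\Lik$-smoothness yields a lower bound $\eta_k \geq \frac{2\delta(1-c)}{\Lmax}$ on the accepted step (the standard backtracking argument, as in \citet{vaswani19a}), I would take conditional expectation $\Eik$ and use the unbiasedness $\Eik[\fik(w_k)] = f(w_k)$ to convert the mini-batch quantities into full-batch ones. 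The PL condition then enters as $\|\grad(w_k)\|^2 \geq 2\mu(f(w_k) - \fofwstar)$, and the bound $\Eik\|\gradik(w_k)\|^2 \leq a(f(w_k) - \fofwstar)$ from Lemma 4 of the Appendix controls the gradient variance; combining these produces a contraction of the form $\Eik[f(w_{k+1}) - \fofwstar] \leq \nu(f(w_k) - \fofwstar) + (\text{terms in } C_{k-1} - \fofwstar)$, where $\nu$ is the constant defined in the statement.

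Next I would derive the companion recursion for the nonmonotone term. Using the definition $\tilde{C}_k = \frac{\xi Q_k C_{k-1} + \fik(w_k)}{Q_{k+1}}$ with $Q_{k+1} = \xi Q_k + 1$, I would bound $C_k - \fofwstar$ above by a convex combination of $C_{k-1} - \fofwstar$ and $f(w_k) - \fofwstar$ after taking expectations, exactly as in the proofs of the earlier theorems, being careful to handle the $\max$ in $C_k = \max\{\tilde{C}_k; \fik(w_k)\}$ by splitting into the two cases. The factor $\frac{\xi Q_k}{Q_{k+1}} \leq \xi$ is what lets $\xi$ act as a tunable contraction rate on the $C$-sequence, and this is precisely where the hypothesis $\xi < \frac{a_2 c}{a_2 c + \Lmax}$ guarantees $b_2 \in (0,1)$.

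The crux is then to combine the two coupled recursions into a single contraction on the Lyapunov quantity with rate $d_2 = \min\{\nu, b_2\}$. The main obstacle, as the authors flag in the discussion preceding Theorem \ref{thm:strongly_convex_short}, is that the deterministic proof technique of \citet{zhang04a} relies on monotonicity-style bounds (such as $C_{k+1} \leq C_k - \|\gradik(w_k)\|$) that simply fail in the stochastic setting. Instead I would choose the weight $a_2\etamax$ on the $C$-term so that the cross terms produced when adding the descent inequality and the $C$-recursion cancel or telescope cleanly; verifying that the interval constraints $\frac{2\delta(1-c)}{\Lmax} < \etaminn$, $\etamax < \frac{2\delta c(1-c)}{\Lmax - 4\mu c(1-c)}$, and $\frac{\Lmax}{4\mu} < c < 1$ simultaneously force both $\nu \in (0,1)$ and $a_2 > 0$ is the delicate bookkeeping step. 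Once the one-step contraction $\E[\,\cdot\,] \leq d_2\,\E[\text{previous Lyapunov}]$ is in hand, I would unroll it over $k$ iterations and use $C_{-1} = f_{i_0}(w_0)$ together with $C_0 = 0$ (so the initial Lyapunov value reduces to $(1 + a_2\etamax)(f(w_0) - \fofwstar)$) to obtain the stated bound. I expect reconciling the $\etamax$ upper bound with positivity of $a_2$ and with $\nu < 1$ to be the genuinely tight part of the argument, since all three share the denominator $\Lmax - 4\mu c(1-c)$ and thus hinge on the same sign condition enforced by $c > \frac{\Lmax}{4\mu}$.
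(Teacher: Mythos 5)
Your overall architecture --- the Lyapunov quantity $f(w_{k+1}) - \fofwstar + a_2\etamax(C_k-\fofwstar)$, the case split on $C_k=\fik(w_k)$ versus $C_k=\tilde C_k$, the bound $\frac{\xi Q_k}{\xi Q_k+1}\le\xi$ that yields $b_2<1$ under $\xi<\frac{a_2c}{a_2c+\Lmax}$, the parameter bookkeeping forcing $a_2>0$ and $\nu\in(0,1)$, and the unrolling with $C_{-1}=f_{i_0}(w_0)$ --- matches the paper. The gap is in your first paragraph, in how you pass from the mini-batch line search inequality to a descent bound on the \emph{full-batch} $f(w_{k+1})$. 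The line search controls $\fik(w_{k+1})$, and you cannot ``take conditional expectation and use unbiasedness'' to turn this into a statement about $f(w_{k+1})$: unbiasedness $\Eik[\fik(w)]=f(w)$ holds for $w$ fixed, whereas $w_{k+1}$ depends on $i_k$, so $\Eik[\fik(w_{k+1})]$ is neither $f(w_{k+1})$ nor $\Eik[f(w_{k+1})]$. The paper instead applies $L$-smoothness of $f$ itself to the update $w_{k+1}=w_k-\eta_k\gradik(w_k)$, obtaining $f(w_{k+1}) \le f(w_k) - \eta_k\langle\grad(w_k),\gradik(w_k)\rangle + \frac{L\eta_k^2}{2}\|\gradik(w_k)\|^2$, and uses the line search condition together with interpolation only to bound the scalar quadratic term, $\eta_k\|\gradik(w_k)\|^2\le\frac{1}{c}\left(C_k-\fik(w_{k+1})\right)\le\frac{1}{c}\left(C_k-\fikofwstar\right)$; this is precisely what injects $C_k$ with coefficient $\frac{L}{2c}$ into the recursion and makes the coupling with the $C$-sequence close.

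A second, related issue: you propose to control the variance via Lemma 4 ($\Eik\|\gradik(w_k)\|^2\le\Lmax(f(w_k)-\fofwstar)$), but the paper's PL proof never invokes that lemma; substituting it for the line-search bound would replace the $C_k$ term by an $f(w_k)$ term and would not reproduce the stated $\nu$. Moreover, because $\eta_k$ is itself a function of $i_k$, you cannot take $\Eik$ of $-\eta_k\langle\grad(w_k),\gradik(w_k)\rangle$ and pull $\eta_k$ out to obtain $-\eta_k\|\grad(w_k)\|^2$. The paper divides the smoothness inequality by $\eta_k$ first, takes $\Eik$ of the now step-size-free inner product to get $-\|\grad(w_k)\|^2$, applies PL, and only then reintroduces the step size via $\etamin\le\eta_k\le\etamax$, which is exactly where $\nu=\etamax\left(\frac{1}{\etamin}+\frac{\Lmax}{2c}+a_2-2\mu\right)$ comes from. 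Without these two maneuvers your first recursion does not close, even though everything downstream of it is laid out correctly.
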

Theorem \ref{thm:pl_short} is particularly meaningful for modern neural networks because of the recent work of \citet{liu22a}. More precisely, their Theorem 8 and Theorem 13 prove that, respectively given a wide-enough fully-connected or convolutional/skip-connected neural network, the corresponding function satisfies a local version of the PL condition. As a consequence, nonmonotone line search methods achieve linear convergence for training of non-convex neural networks if the assumptions of both Theorem 3 above and Theorem 8 from \citet{liu22a} hold and if the initial point $w_0$ is close enough to $w^*$. 
\setcounter{lemma}{0}
\setcounter{theorem}{0}

	\section{Experiments}\label{sec:experiments}
	In this section, we benchmark PoNoS against state-of-the-art methods for training different over-parametrized models. We focus on various deep learning architectures for multi-class image classification problems, while in Section \ref{sec:convex_and_trans} we consider transformers for language modelling and kernel models for binary classification. In the absence of a better estimate, we assume $\fik(w^*) =0$ for all the experiments. Indeed the final loss is in many problems very close to $0$.
\par Concerning datasets and architectures for the image classification task, we follow the setup by \citet{luo19a}, later employed also in \citet{vaswani19a,loizou21a}. In particular, we focus on the datasets MNIST, FashionMNIST, CIFAR10, CIFAR100 and SVHN, addressed with the architectures MLP \citep{luo19a}, EfficientNet-b1 \citep{tan19a}, ResNet-34 \citep{he16a}, DenseNet-121 \citep{huang17a} and WideResNet \citep{zagoruyko16a}. Because of space limitation, we will not show all the experiments here, but report on the first three datasets combined respectively with the first three architectures. We refer to the Appendix for the complete benchmark (see Figures 5-16) and the implementation details (Section C). In Section C of the Appendix, we also discuss the sensitivity of the results to the selection of hyper-parameters.
\par In Figure \ref{fig:exp1_short} we compare SGD \citep{robbins51a}, Adam \citep{kingma15a}, SLS \citep{vaswani19a}, SPS \citep{loizou21a} and PoNoS. We present train loss (log scale), test accuracy and average step size within the epoch (log scale). Note that the learning rate of SGD and Adam has been chosen through a grid-search as the one achieving the best performance on each problem. Based on Figure \ref{fig:exp1_short}, we can make the following observations:
\begin{itemize}
\item PoNoS achieves the best performances both in terms of train loss and test accuracy. In particular, it often terminates by reaching a loss of $10^{-6}$ and it always achieves the highest test accuracy. 
\item SLS does not always achieve the same best accuracy as PoNoS. In terms of training loss, SLS is particularly slow on \mlp$ $ and \fashion, while competitive on the others. On these two problems, both SLS and SPS employ a step size that is remarkably smaller than that of PoNoS. On the same problems, SPS behaves similarly as SLS because the original Polyak step size is larger than the ``smoothing/resetting'' value \eqref{eq:etamax_paper}, i.e., $\tilde{\eta}_{k,0}> \eta_{k-1}\gamma^{b/M}$ and thus the step size is controlled by \eqref{eq:etamax_paper}. On the other hand, the proposed nonmonotone line search selects a larger step size, achieving faster convergence and better generalization \citep{nacson22a}.
\item SLS encounters some cancellation errors (e.g., around epoch 120 in \res) that lead the step size to be reduced drastically, sometimes reaching values below $10^-7$. These events are caused by the fact that $\fik(w_k)$ and $\fik(w_{k+1})$ became numerically identical. Thanks to the nonmonotone term $C_k$ replacing $\fik(w_k)$, PoNoS avoids cancellations errors by design.
\item SGD, Adam and SPS never achieve the best test accuracy nor the best training loss. In accordance with the results in \citet{vaswani19a}, the line-search-based algorithms outperform the others.
\end{itemize}
Because of space limitation, we defer the comparison between PoNoS and its monotone counterpart to the Appendix (Section E.2). There we show that PoNoS outperforms the other line search techniques, including a tractable stochastic adaptation of the method by \citet{grippo86a}.

\renewcommand{\dir}{exp1/}
\renewcommand{\imgS}{0.35}
\begin{figure}[!ht] 
	\begin{minipage}{0.95\textwidth}
		\subfloat{\includegraphics[width=\imgS\linewidth]{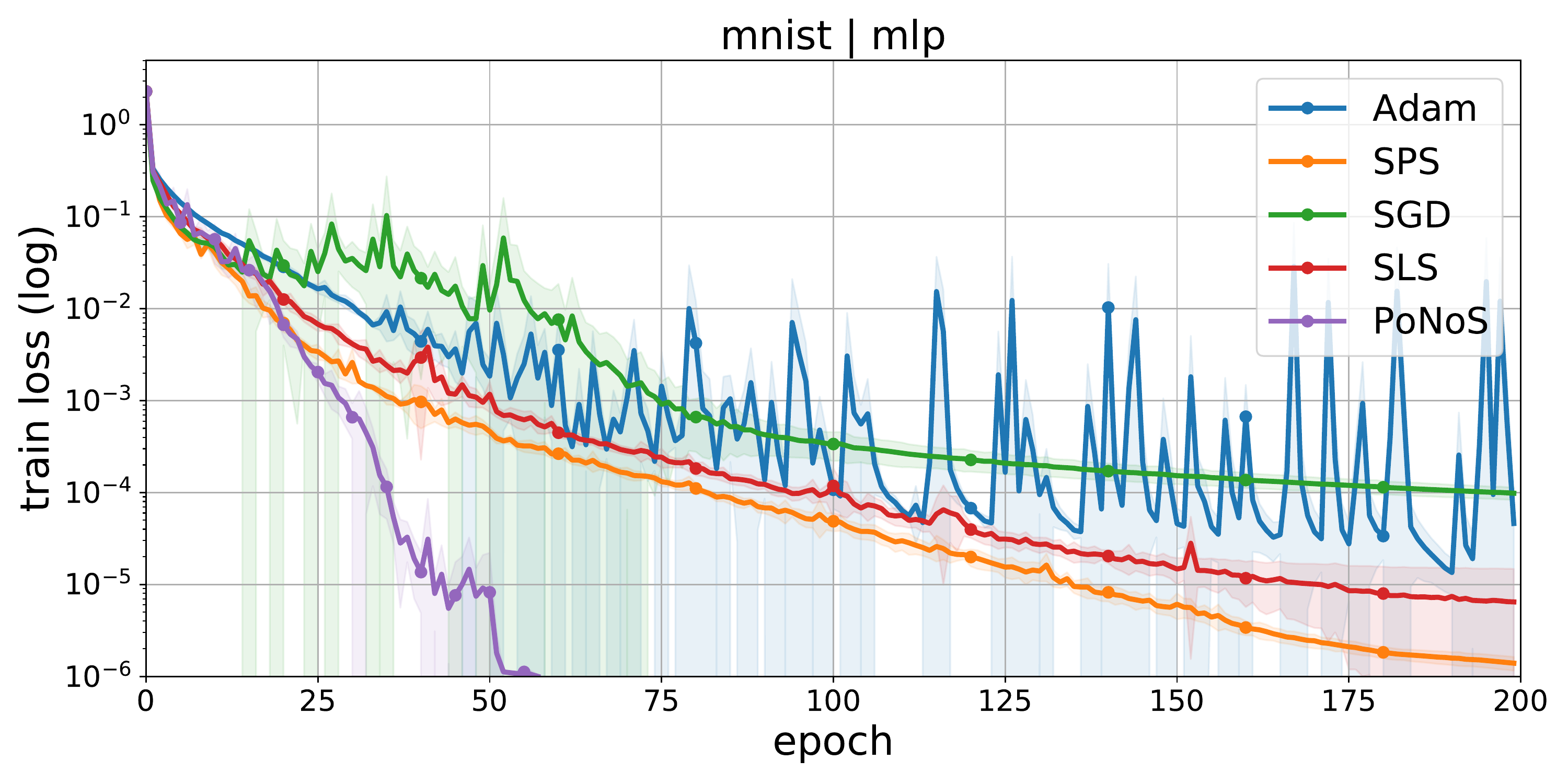}}
		\subfloat{\includegraphics[width=\imgS\linewidth]{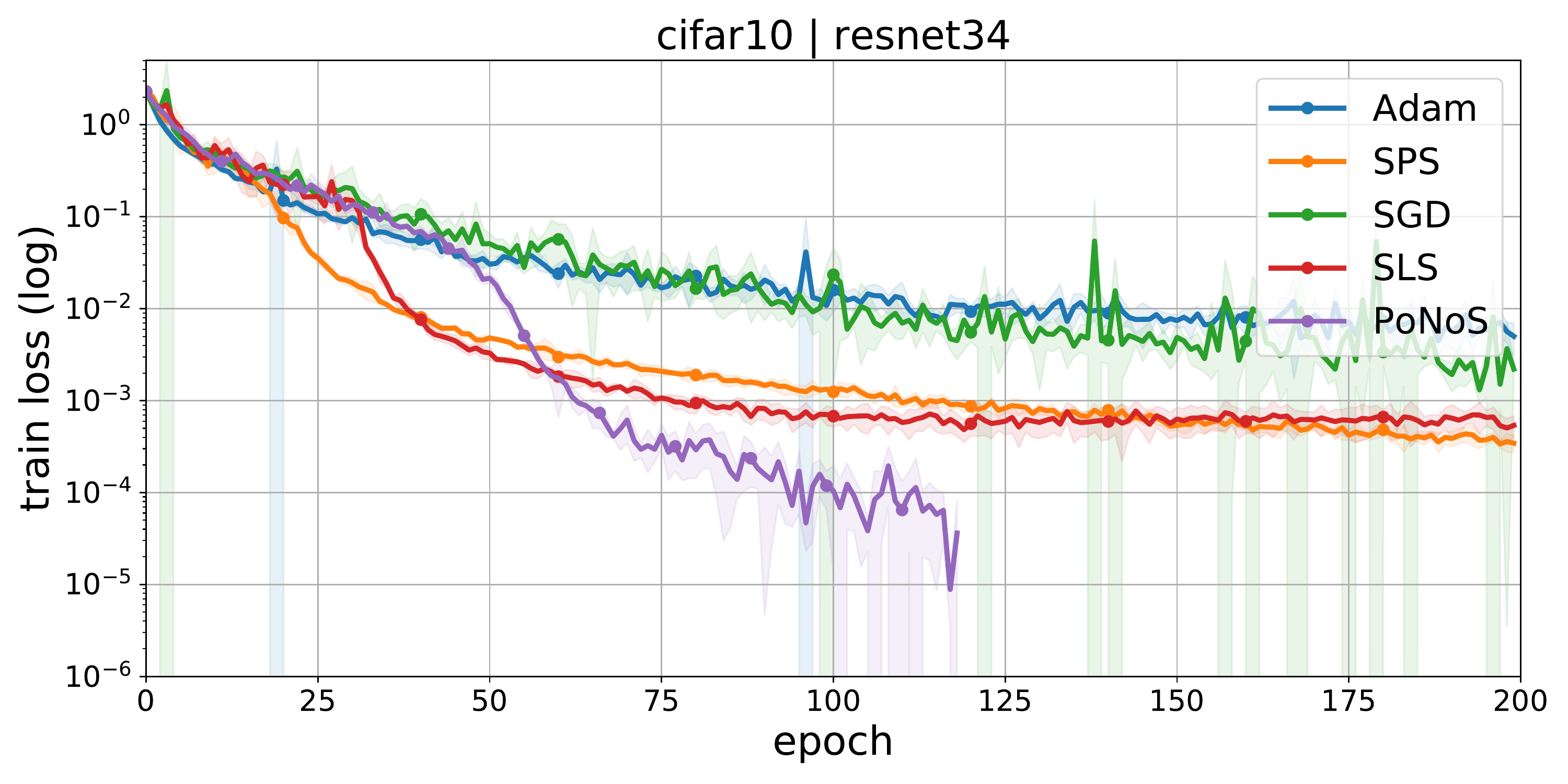}}
		\subfloat{\includegraphics[width=\imgS\linewidth]{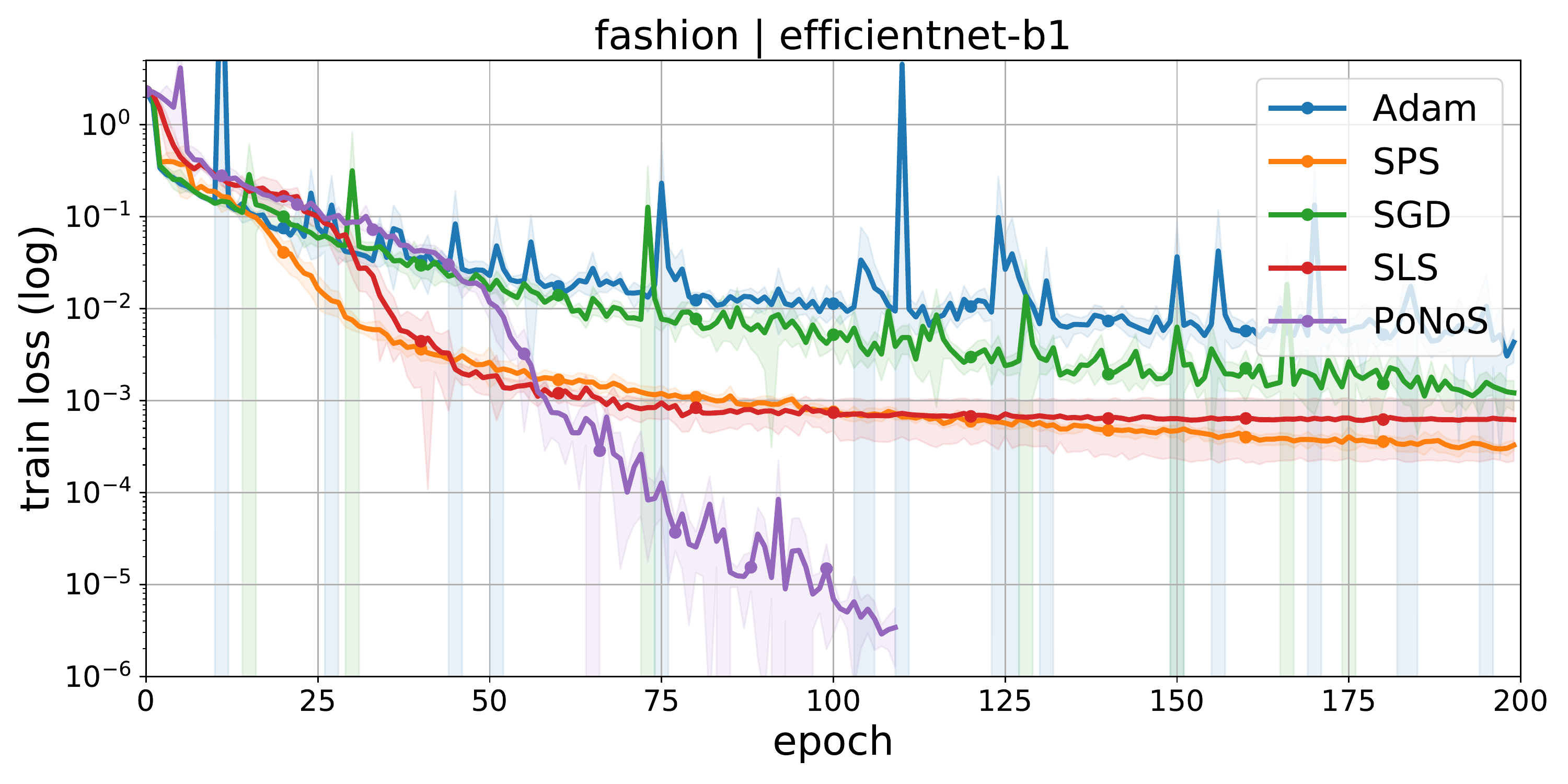}}\\
		\subfloat{\includegraphics[width=\imgS\linewidth]{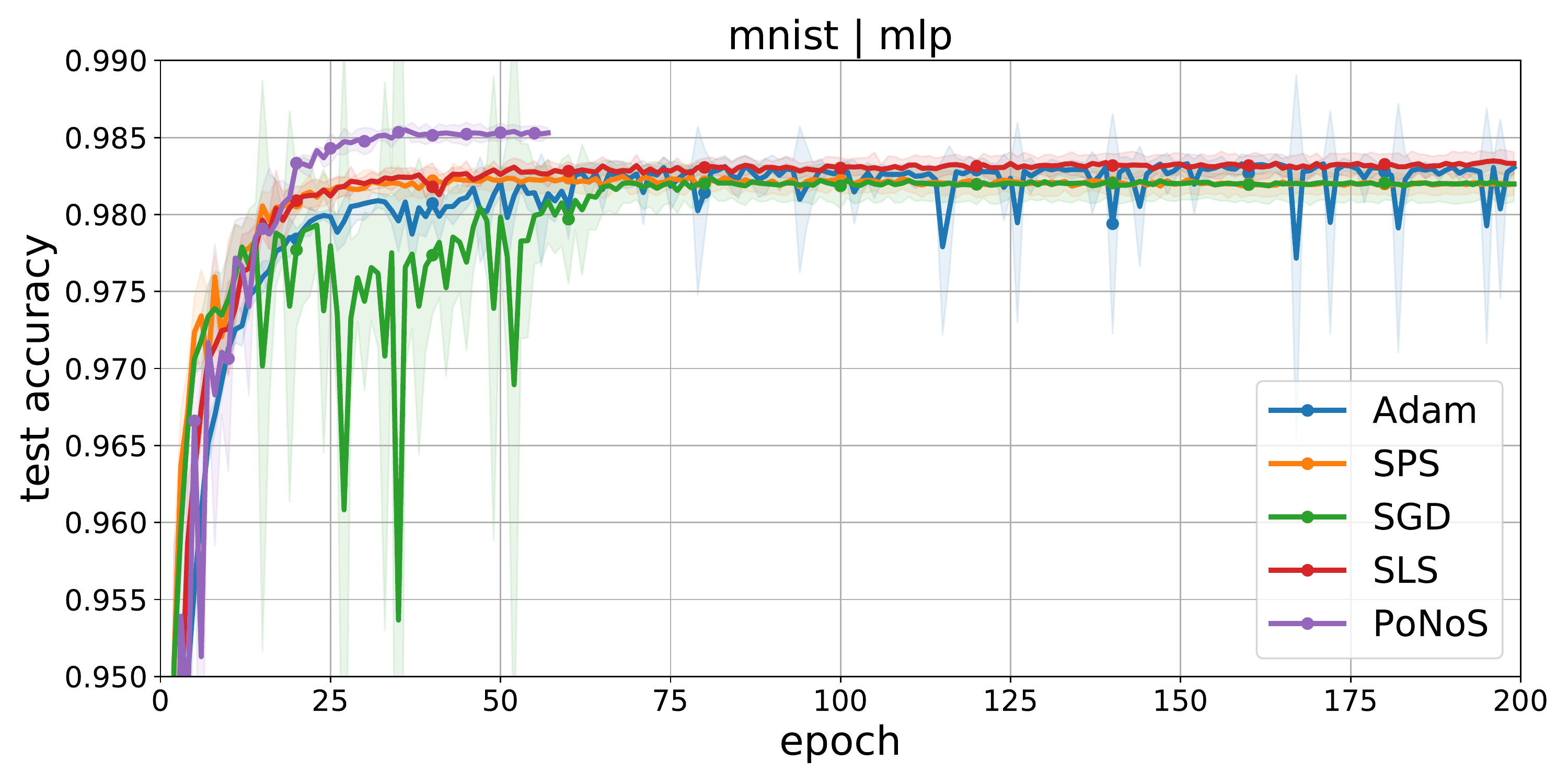}}
		\subfloat{\includegraphics[width=\imgS\linewidth]{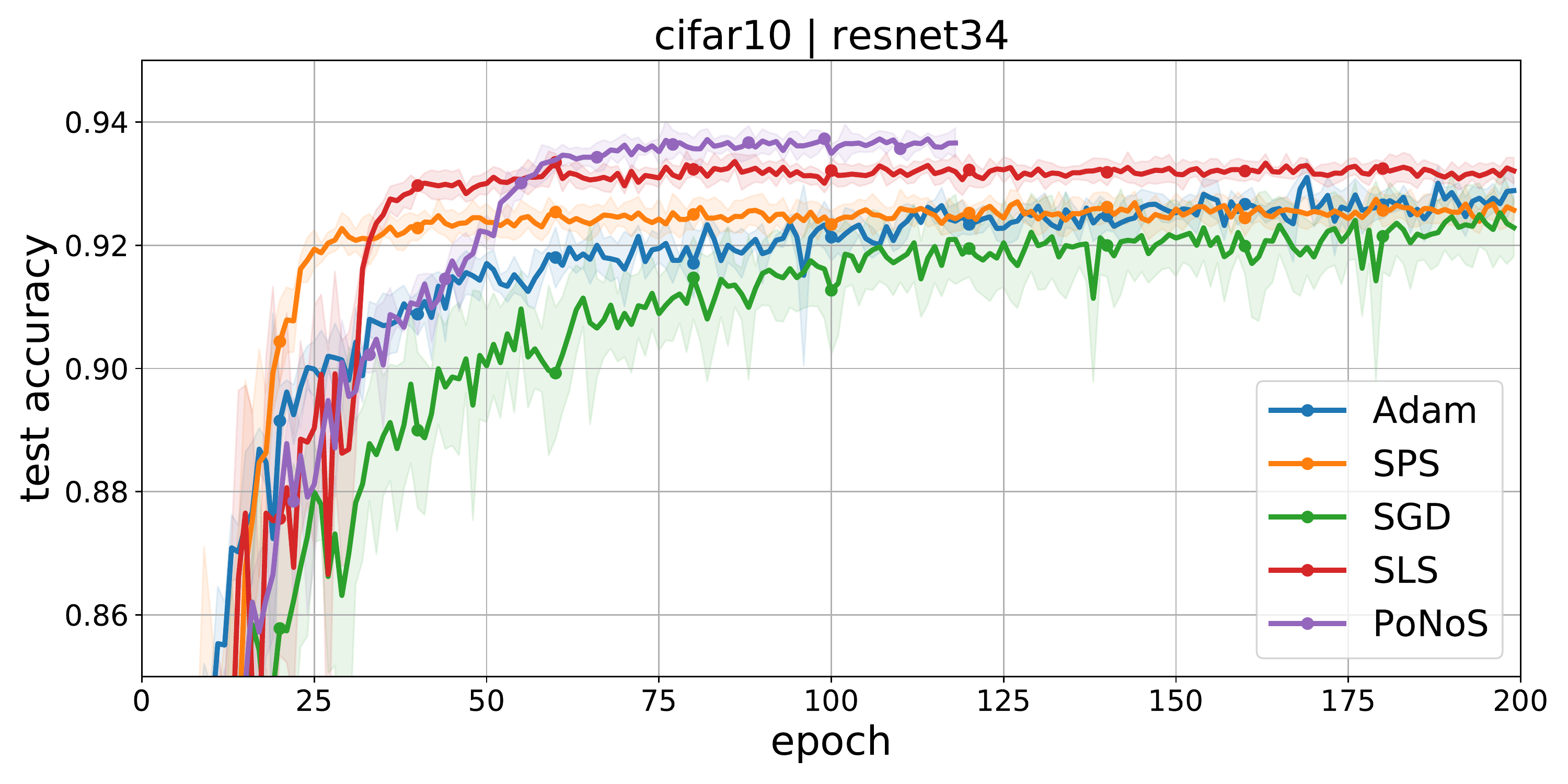}}
		\subfloat{\includegraphics[width=\imgS\linewidth]{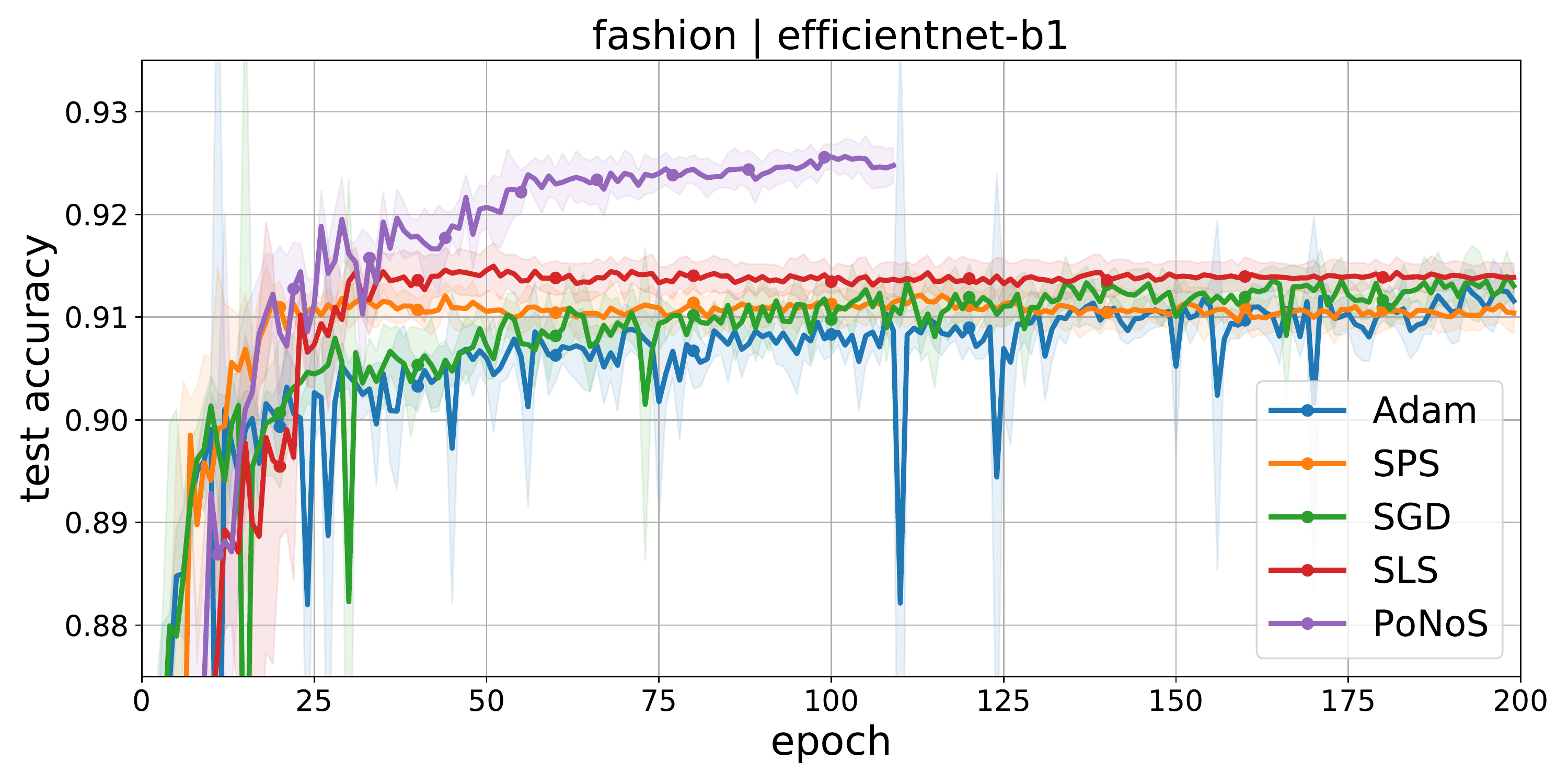}}\\
		\subfloat{\includegraphics[width=\imgS\linewidth]{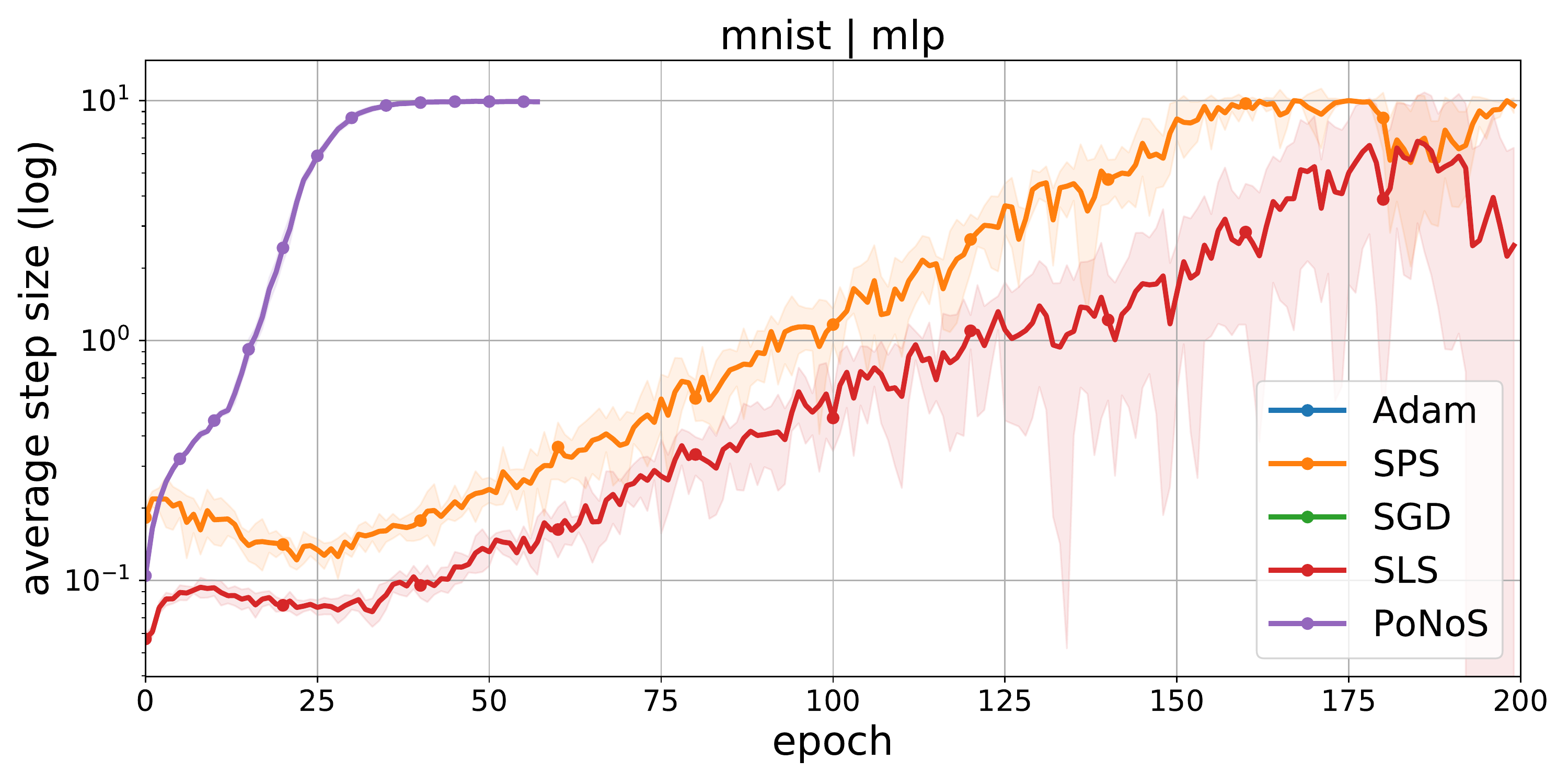}}
		\subfloat{\includegraphics[width=\imgS\linewidth]{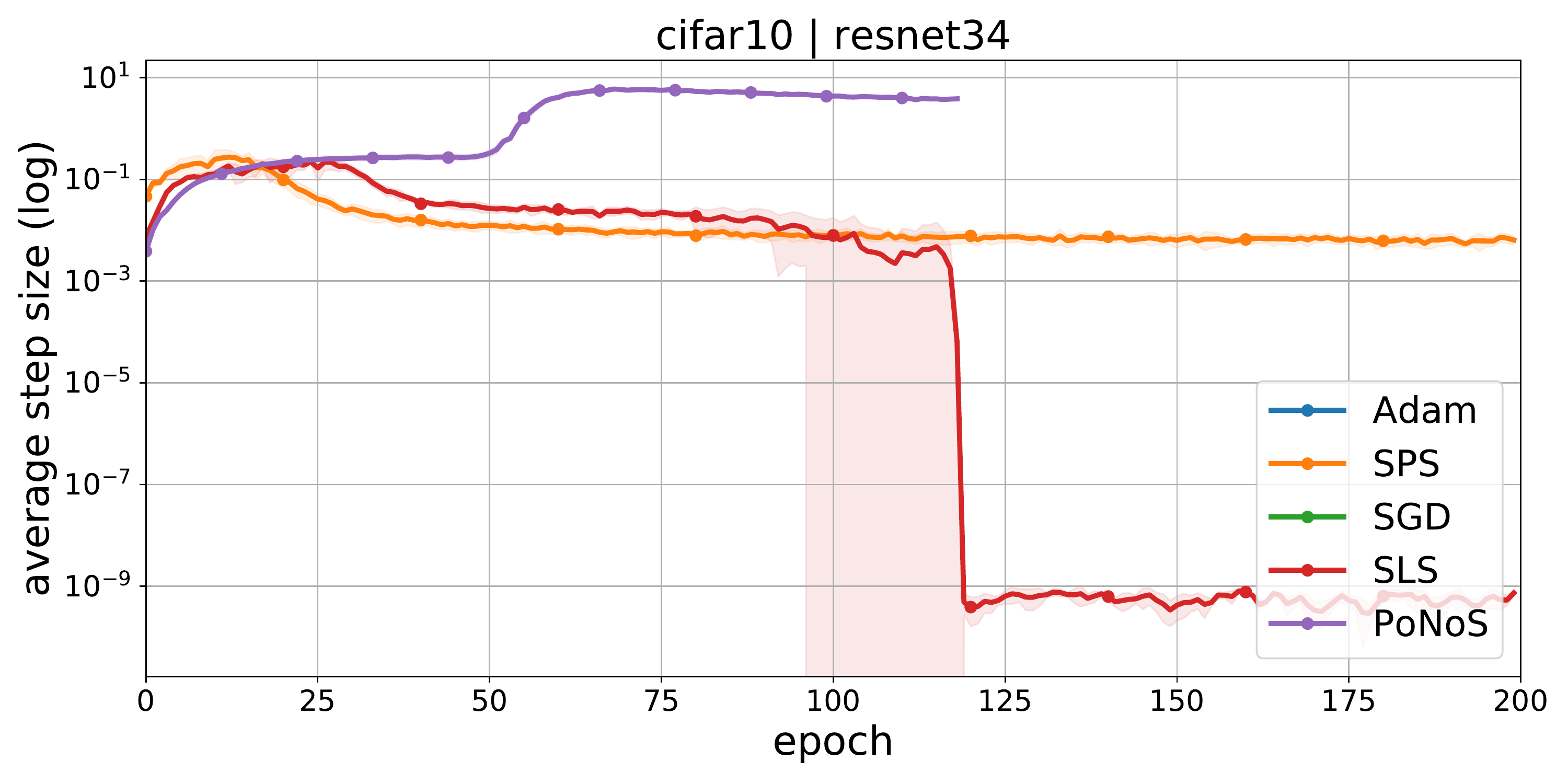}}
		\subfloat{\includegraphics[width=\imgS\linewidth]{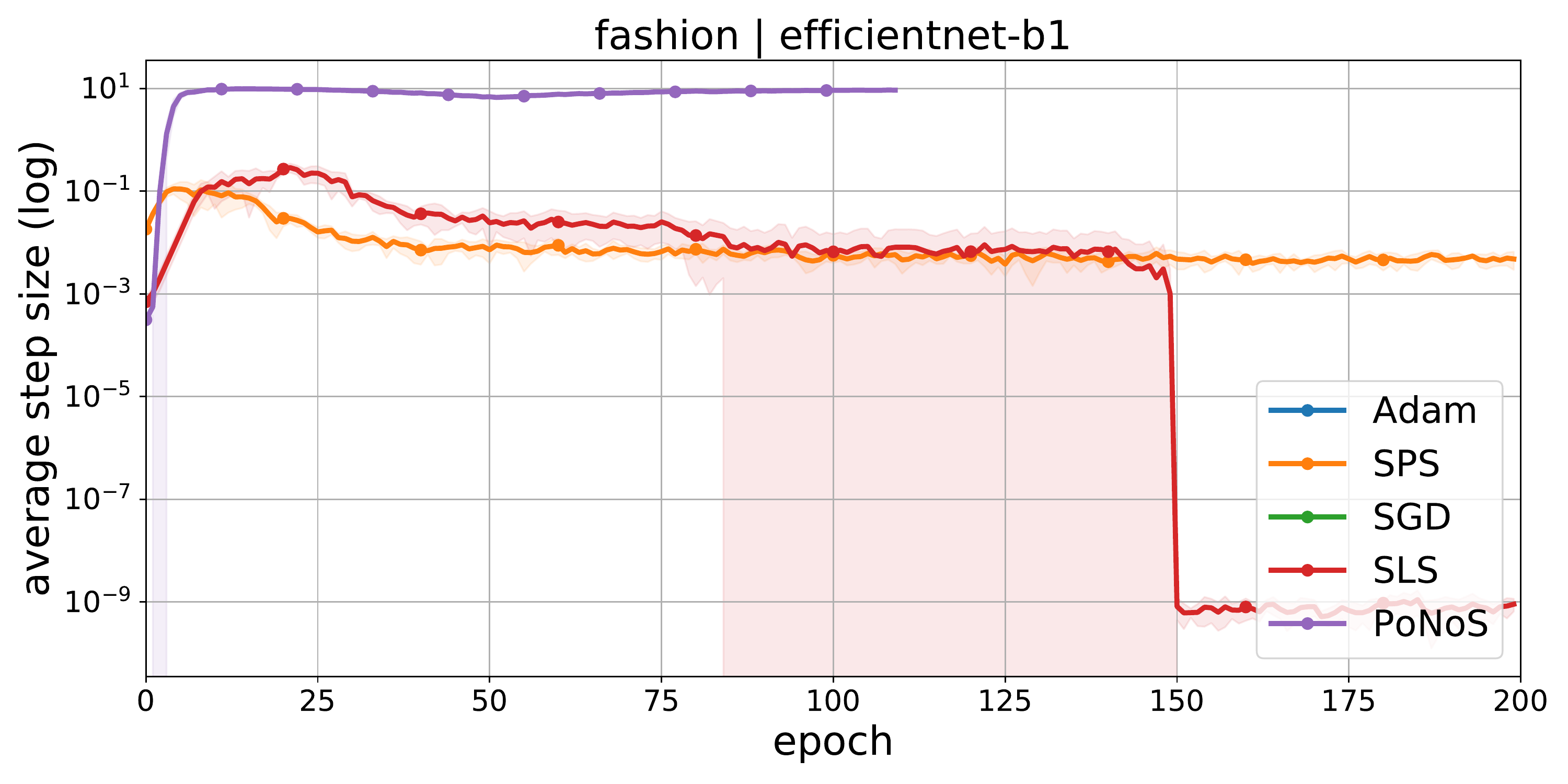}}
		\caption{Comparison between the proposed method (PoNoS) and the-state-of-the-art. Each column focus on a dataset. First row: train loss. Second row: test accuracy. Third row: step size.}\label{fig:exp1_short}
	\end{minipage}
\end{figure}

	\subsection{A New Resetting Technique}
	\par In this subsection, we compare different initial step sizes and resetting techniques. We fix the line search to be \eqref{eq:zhang_paper}, but similar comments can be made on \eqref{eq:armijo_paper} (see the Section D.2 of the Appendix). In Figure \ref{fig:reset_short} we compare PoNoS and PoNoS\_reset0 (without \eqref{eq:new_etak_paper}) with zhang\_reset2 (initial step size \eqref{eq:etamax_paper}) and: 
\begin{itemize}
	\item zhang\_reset3: $\eta_{k,0}=\eta_{k-1} \frac{||\nabla f_{i_{k-1}}(w_{k-1})||^2}{||\gradik(w_k)||^2}$, adapted to SGD from \citet{nocedal06a},
	\item zhang\_reset4: $\eta_{k,0}= \frac{2\left(f_{i_{k-1}}(w_{k-1}) - f_{i_{k-1}}(w_k)\right)}{||\nabla f_{i_{k-1}}(w_{k-1})||}.$ adapted to SGD from  \citet{nocedal06a},
	\item zhang\_every2: same as PoNoS\_reset0, but a new step is computed only every 2 iterations. 
\end{itemize}
In Figure \ref{fig:reset_short}, we report train loss (log scale), the total amount of backtracks per epoch and the average step size within the epoch (log scale). From Figure \ref{fig:reset_short}, we can make the following observations:

\begin{itemize}
	\item PoNoS and PoNoS\_reset0 achieve very similar performances. In fact, the two algorithms yield step sizes that are almost always overlapping. An important difference between PoNoS\_reset0 and PoNoS can be noticed in the amount of backtracks that the two algorithms require. The plots show a sum of $\frac{M}{b}$ elements, with $\frac{M}{b}=391$ or $469$ depending on the problem and PoNoS's line hits exactly this value. This means that PoNoS employs a median of 1 backtrack per iteration for the first 5-25 epochs, while PoNoS\_reset0 needs more backtracks in this stage (around 1500-3000 per-epoch, see Section D.2 of the Appendix). After this initial phase, both PoNoS\_reset0 and PoNoS reduce the amount of backtracks until it reaches a (almost) constant value of 0.
	\item zhang\_every2 does not achieve the same good performance as PoNoS or PoNoS\_reset0. The common belief that step sizes can be used in many subsequent iterations does not find confirmation here. In fact, zhang\_every2 shows that we cannot skip the application of a line search if we want to maintain the same good performances.
	\item All the other initial step sizes achieve poor performances on both train loss and test accuracy. In many cases, the algorithms yield step sizes (also before the line search) that are too small w.r.t. \eqref{eq:loizou_paper}. 
\end{itemize}

\renewcommand{\dir}{reset/}
\renewcommand{\imgS}{0.35}
\begin{figure}[!ht]
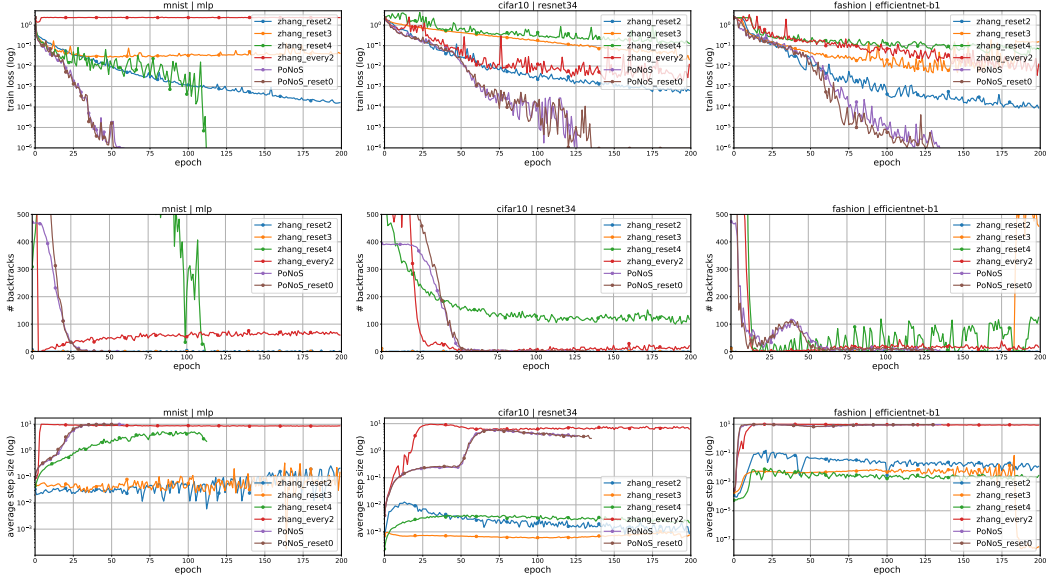
 
	\begin{minipage}{0.95\textwidth}
		\subfloat{\includegraphics[width=\imgS\linewidth]{\dir/mlp/train_loss}}
		\subfloat{\includegraphics[width=\imgS\linewidth]{\dir/cifar10_resnet/train_loss}}
		\subfloat{\includegraphics[width=\imgS\linewidth]{\dir/fashion_effb1/train_loss}}\\
		\subfloat{\includegraphics[width=\imgS\linewidth]{\dir/mlp/backtracks_focus}}
		\subfloat{\includegraphics[width=\imgS\linewidth]{\dir/cifar10_resnet/backtracks_focus}}
		\subfloat{\includegraphics[width=\imgS\linewidth]{\dir/fashion_effb1/backtracks_focus}}\\
		\subfloat{\includegraphics[width=\imgS\linewidth]{\dir/mlp/agv_step_size}}
		\subfloat{\includegraphics[width=\imgS\linewidth]{\dir/cifar10_resnet/agv_step_size}}
		\subfloat{\includegraphics[width=\imgS\linewidth]{\dir/fashion_effb1/agv_step_size}}
		\caption{Comparison between different initial step sizes and resetting techniques. Each column focus on a dataset. First row: train loss. Second row: $\#$ backtracks. Third row: step size.}\label{fig:reset_short}
	\end{minipage}
\end{figure}
	
	\subsection{Time Comparison}
	In this subsection, we show runtime comparisons corresponding to Figure \ref{fig:exp1_short}. In the first row of Figure \ref{fig:time_short}, we plot the train loss as in the first row of Figure \ref{fig:exp1_short}. However, the $x$-axis of Figure \ref{fig:time_short} measures the cumulative epoch time of the average of 5 different runs of the same algorithm with different seeds. In the second row of Figure \ref{fig:time_short}, we report the runtime per-epoch (with shaded error bars) on the $y$-axis and epochs on the $x$-axis. From Figures \ref{fig:exp1_short} and \ref{fig:time_short}, it is clear that PoNoS is not only faster than the other methods in terms of epochs but also in terms of total computational time. In particular, PoNoS is faster than SGD, despite the fact the second achieves the lowest per-epoch time. Again from the second row of Figure \ref{fig:time_short}, we can observe that PoNoS's per-epoch time makes a transition from the phase of a median of 1 backtrack (first 5-25 epochs) to the phase of a median of 0 backtracks where its time is actually overlapping with that of SLS (always a median of 0 backtracks). In the first case, PoNoS requires less than twice the time of SGD, and in the second, this time is lower than $1.5$ that of SGD. Given these measures, PoNoS becomes faster than SGD/Adam in terms of per-epoch time as soon as a grid-search (or any other hyper-parameter optimization) is employed to select the best-performing learning rate.
\par To conclude, let us recall that any algorithm based on a stochastic line search always requires one additional forward pass if compared with SGD. In fact, both $\fik(w_k)$ and $\fik(w_{k+1})$ are computed at each $k$ and each backtrack requires one additional forward pass. On the other hand, if we consider that one backward pass costs roughly two forward passes and that SGD needs one forward and one backward pass, any additional forward pass costs roughly one-third of SGD. These rough calculations have been verified in Section E.6 of the Appendix, where we profiled the single iteration of PoNoS. Following these calculations and referring to the two phases of Figure \ref{fig:time_short}, one iteration of PoNoS only costs $\frac{5}{3}$ that of SGD in the first phase and $\frac{4}{3}$ in the second. 

\renewcommand{\dir}{time/}
\renewcommand{\imgS}{0.35}
\begin{figure}[!ht] 
	\hspace{-1mm}
	\begin{minipage}{0.95\textwidth}
		\subfloat{\includegraphics[width=\imgS\linewidth]{\dir/mlp/train_loss}}
		\subfloat{\includegraphics[width=\imgS\linewidth]{\dir/cifar10_resnet/train_loss}}
		\subfloat{\includegraphics[width=\imgS\linewidth]{\dir/fashion_effb1/train_loss}}\\
		\renewcommand{\dir}{exp1/}
		\subfloat{\includegraphics[width=\imgS\linewidth]{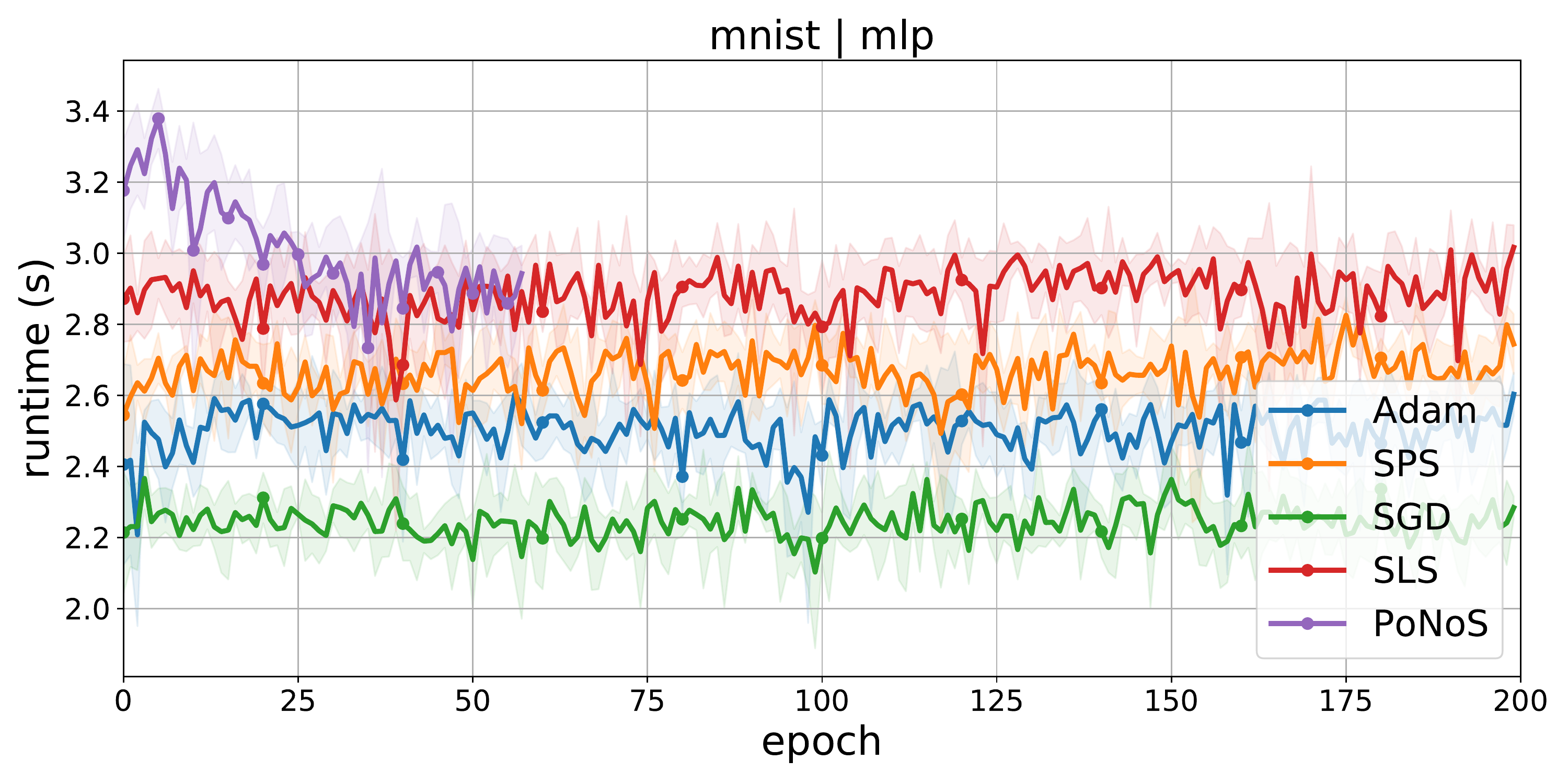}}
		\subfloat{\includegraphics[width=\imgS\linewidth]{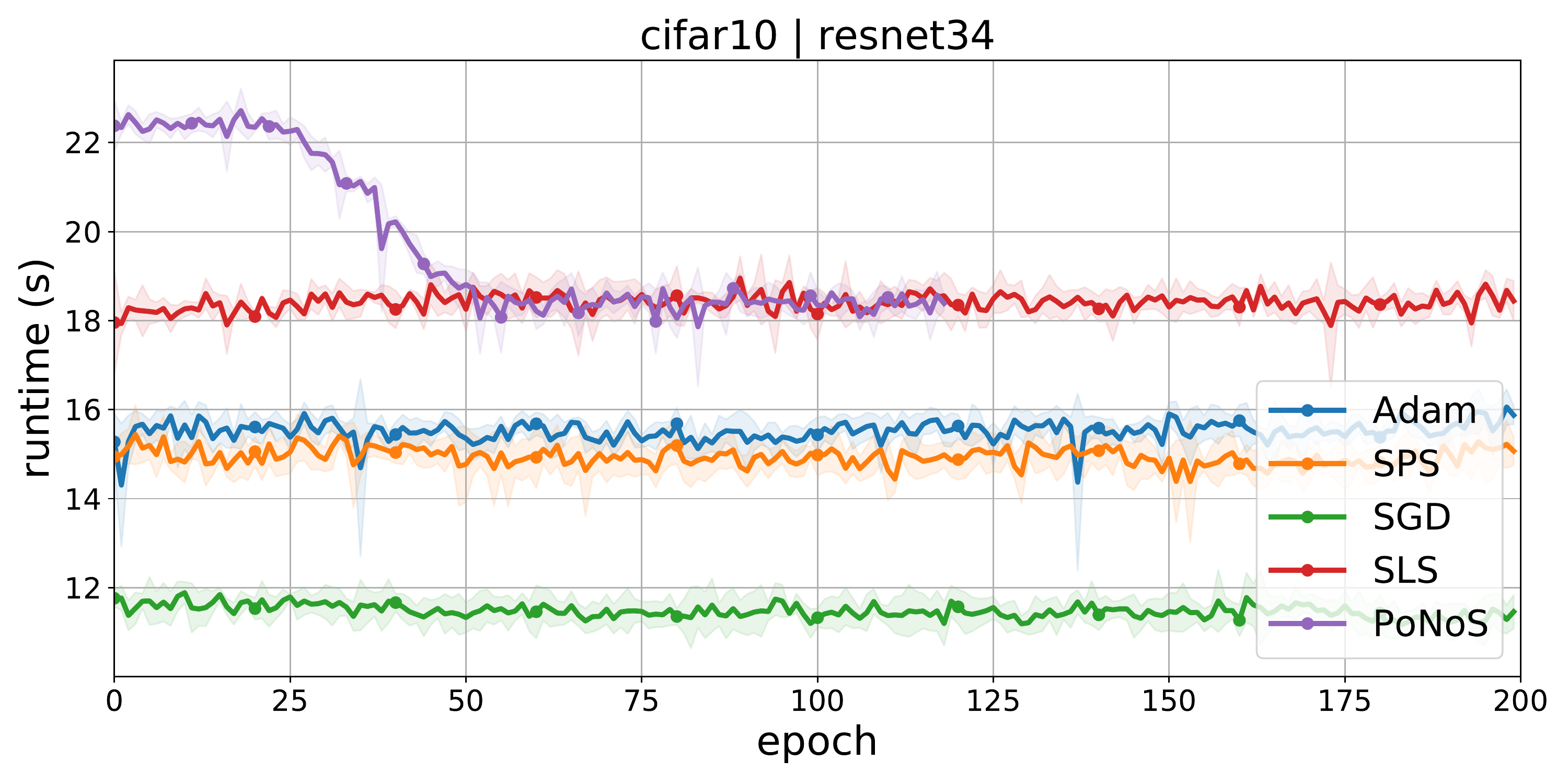}}
		\subfloat{\includegraphics[width=\imgS\linewidth]{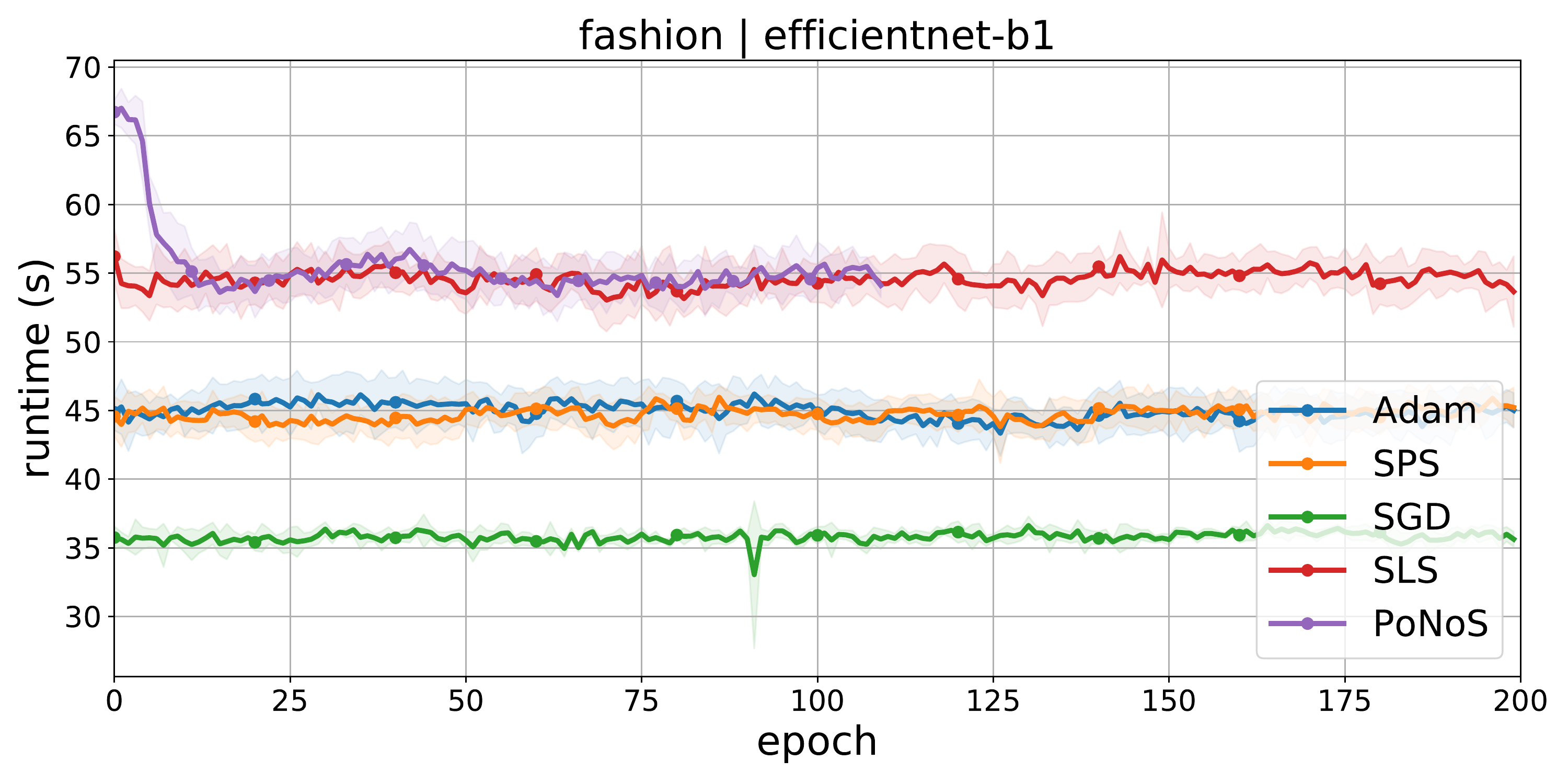}}\\
		\caption{Time comparison (s) between the proposed method (PoNoS) and the-state-of-the-art. Each column focus on a dataset. First row: train loss vs runtime. Second row: epoch runtime.}\label{fig:time_short}
	\end{minipage}
\end{figure}
	
	\subsection{Experiments for Convex Losses and for Transformers}\label{sec:convex_and_trans}
	As a last benchmark, we take into account a set of convex problems from \citet{vaswani19a,loizou21a} and the transformers \citep{vaswani17a} trained from scratch in \citet{kunstner23a}. In Figure \ref{fig:convex_trans} we show one convex problem (first column) and two transformers (last two column). We leave the fine-tuning of transformers to future works. Our experiments take into account binary classification problems addressed with a RBF kernel model without regularization. We show the results achieved on the dataset mushrooms, while leaving those on ijcnn, rcv1 and w8a to Section D.4 of the Appendix. Given the high amount of iterations, the smoothed version of the train loss will be reported. The test accuracy is also only reported in Section D.4 since (almost) all the methods achieve the best test accuracy in all the problems within the first few iterations. From the left subplot of Figure \ref{fig:convex_trans}, we can observe that PoNoS obtains very good performances also in this setting (see Appendix). In Figure \ref{fig:convex_trans}, PoNoS achieves a very low train loss ($10^{-4}$) within the first 200 iterations. Only SLS is able to catch up, but this takes 6 times the iterations of PoNoS. On this problem, SLS is the only method reaching the value ($10^{-6}$). The methods SLS and SPS behave very similarly on all the datasets (see the Appendix) since in both cases \eqref{eq:etamax_paper} controls the step size. As clearly shown by the comparison with PoNoS, this choice is suboptimal and the Polyak step size is faster. Because of \eqref{eq:etamax_paper}, both SLS and SPS encounter slow convergence issues in many of problems of this setting. As in Figure \ref{fig:exp1_short}, SGD and Adam are always slower than PoNoS. 

We consider training transformers on language modeling datasets. In particular, we train a Transformer Encoder \citep{vaswani17a} on PTB \citep{marcus93a} and a Transformer-XL \citep{dai19a} on Wikitext2 \citep{merity17a}. In contrast to the case of convolutional neural networks, the most popular method for training transformers is Adam and not SGD \citep{pan22a,kunstner23a}. For this reason, we use a preconditioned version of PoNoS, SLS, and SPS (respectively PoNoS\_prec, SLS\_prec, and SPS\_prec, see Section D.5 of the Appendix for details). In fact, Adam can be considered a preconditioned version of SGD with momentum \citep{vaswani20a}. As in \citet{kunstner23a}, we focus on the training procedure and defer the generalization properties to the Appendix. From Figure \ref{fig:convex_trans}, we can observe that the best algorithms in this setting are preconditioned-based and not SGD-based, in accordance with the literature. Moreover, PoNoS\_prec achieves similar performances as Adam's. In particular from the central subplot of Figure \ref{fig:convex_trans}, we observe that PoNoS\_prec is the only algorithm as fast as Adam, while all the others have difficulties achieving loss below 1. Taking the right subplot of Figure \ref{fig:convex_trans} into account, we can notice that PoNoS\_prec is slower than Adam on this problem. On the other hand, there is not one order of difference between the two final losses (i.e., $\sim 1.5$ points). Moreover, we should keep in mind that Adam's learning rate has been fine-tuned separately for each problem, while PoNoS\_prec has been used off-the-shelf.

\renewcommand{\imgS}{0.35}
\begin{figure}[!h] 
	\hspace{-1mm}
	\begin{minipage}{0.95\textwidth}
		\subfloat{\includegraphics[width=\imgS\linewidth]{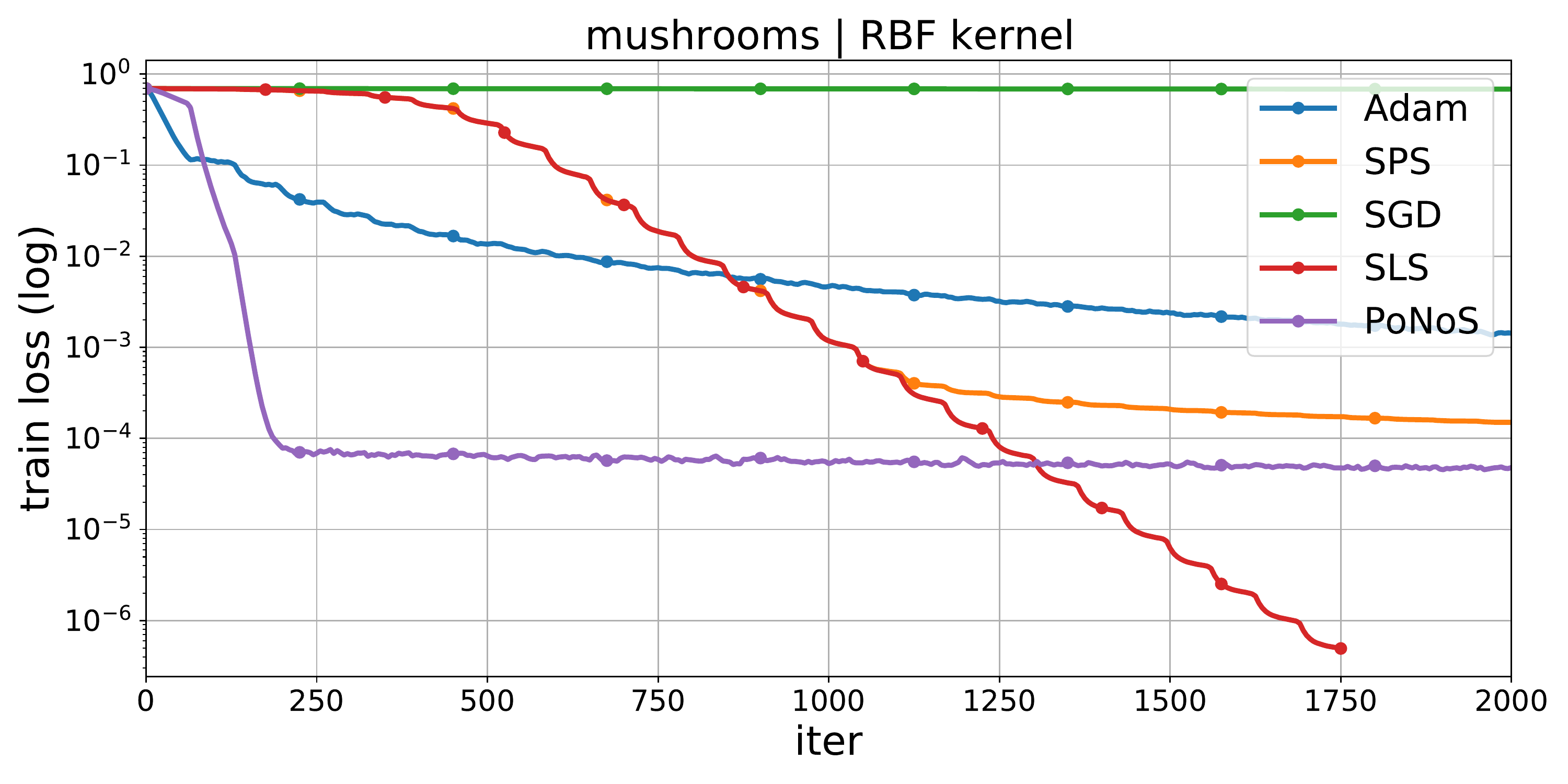}}
		\subfloat{\includegraphics[width=\imgS\linewidth]{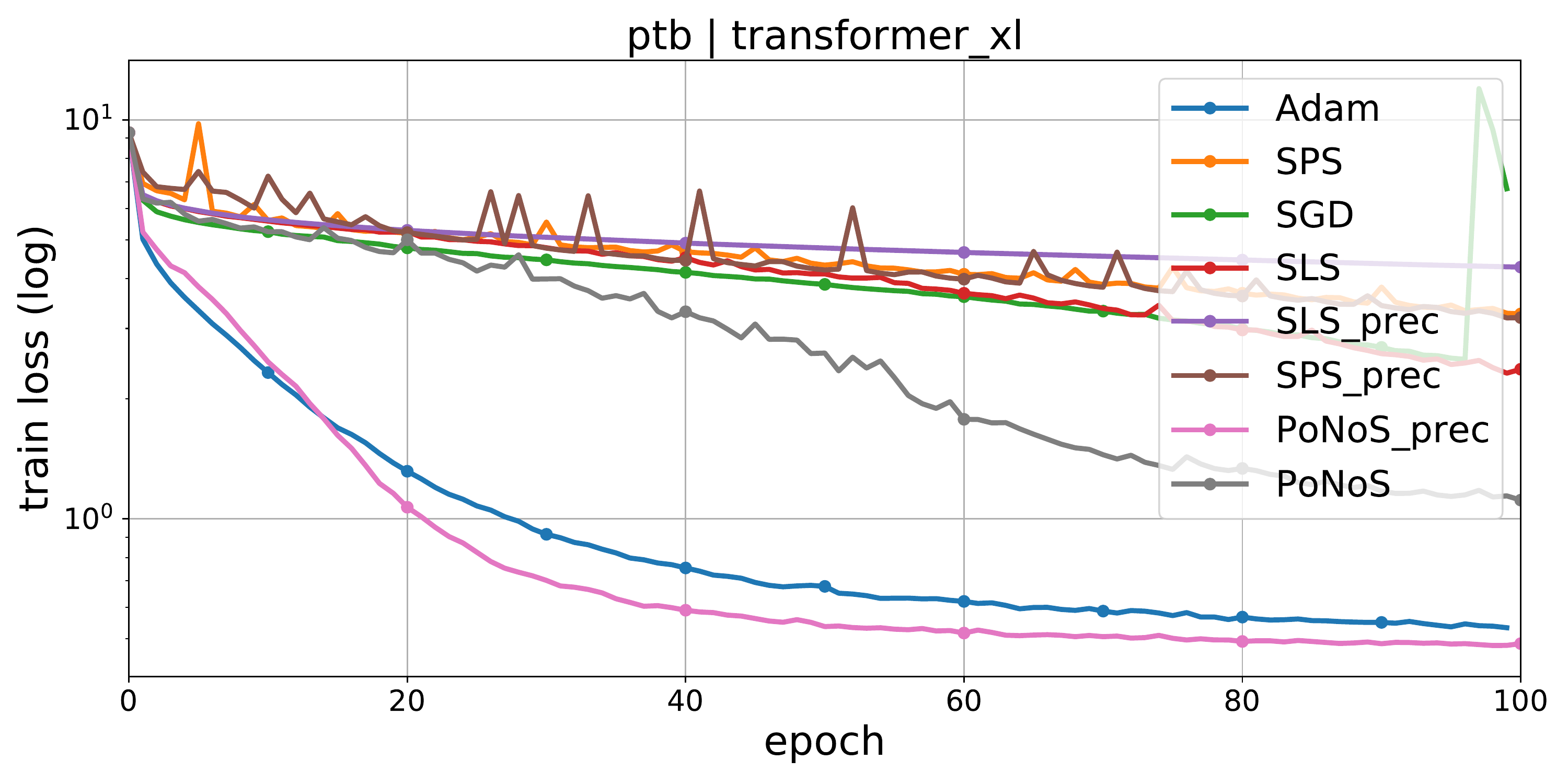}}
		\subfloat{\includegraphics[width=\imgS\linewidth]{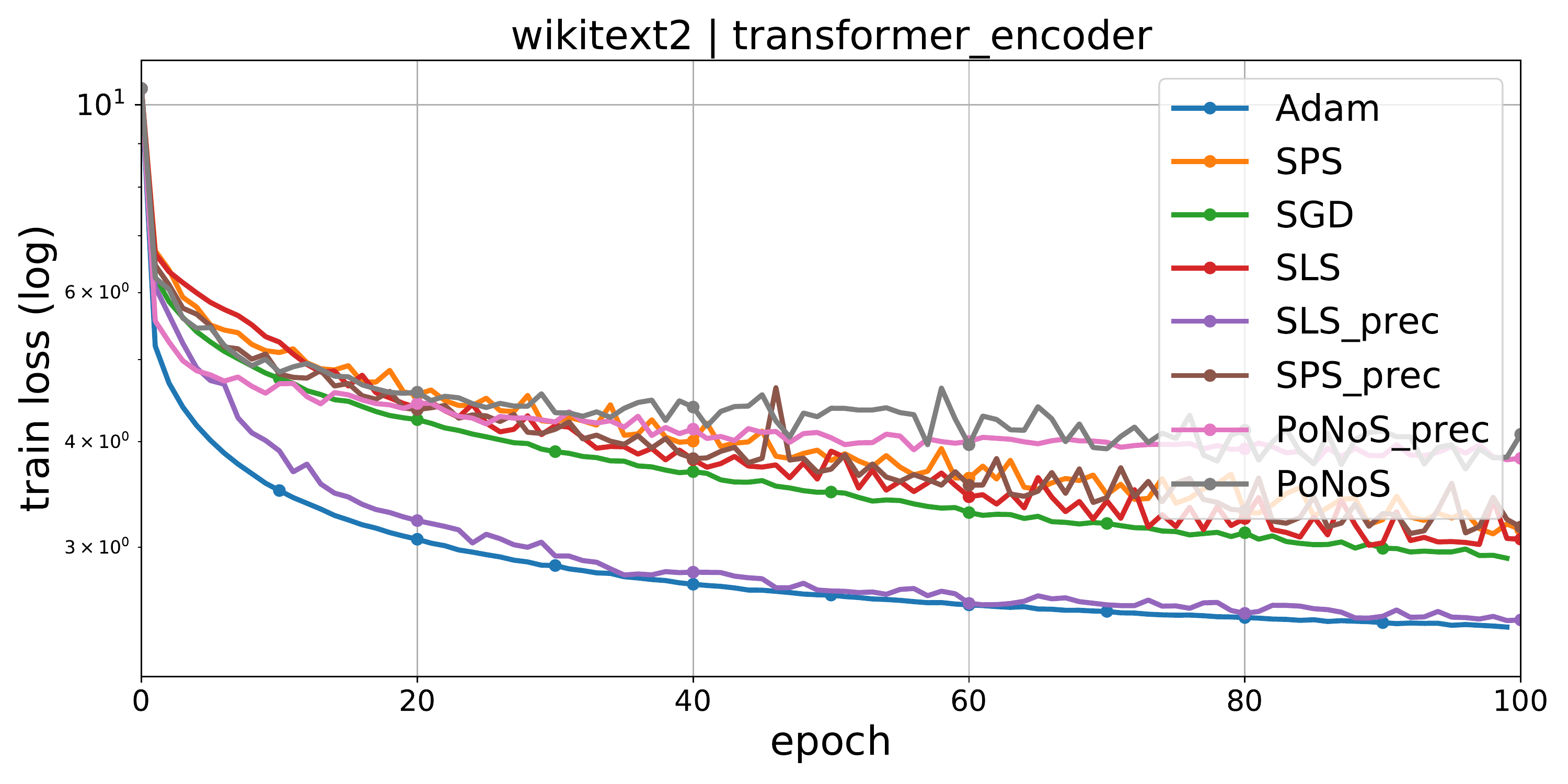}}
		\caption{Train loss comparison between the new method (PoNoS) and the state-of-the-art on convex kernel models (first column) and transformers (last two columns).}\label{fig:convex_trans}
	\end{minipage}
\end{figure}

	\section{Conclusion}\label{sec:conclusion}
	In this work, we showed that modern DL models can be efficiently trained by nonmonotone line search methods. More precisely, nonmonotone techniques have been shown to outperform the monotone line searches existing in the literature. A stochastic Polyak step size with resetting has been employed as the initial step size for the nonmonotone line search, showing that the combined method is faster than the version without line search. Moreover, we presented the first runtime comparison between line-search-based methods and SGD/Adam. The results show that the new line search is overall computationally faster than the state-of-the-art. A new resetting technique is developed to reduce the amount of backtracks to almost zero on average, while still maintaining a large initial step size.
To conclude, the similar behavior of SLS\_prec and Adam on the rightmost subplot of Figure \ref{fig:convex_trans} suggests that other initial step sizes might also be suited for training transformers. We leave such exploration (e.g., a stochastic BB like in \citet{tan16a,liang19a}) to future works.

We proved three convergence rate results for stochastic nonmonotone line search methods under interpolation and under either strong-convexity, convexity, or the PL condition. Our theory matches its monotone counterpart despite the use of a nonmonotone term. In the future, we plan to explore the conditions of the theorems in \citet{liu22a} and to study a bridge between their local PL results and our global PL assumption. To conclude, it is worth mentioning that nonmonotone line search methods might be connected to the edge of stability phenomenon described in \citet{cohen20a}, because of the very similar behavior they induce in the decrease of (deterministic) objective functions. However, a rigorous study remains for future investigations.
	
	\section*{Acknowledgments}
	The work was conducted within the KI-Starter project
	“Robustness and Generalization in Training Deep Neural Networks” funded by the Ministry of Culture and Science Nordrhein-Westfalen, Germany and partially supported by the Canada CIFAR AI Chair Program.
	
	\bibliographystyle{plainnat}
	\bibliography{biblio}   
	\newgeometry{left=1.50cm, right=1.50cm, top=2.00cm, bottom=2.00cm}
	\appendix
	{\huge\textbf{Appendix}}
	\section*{Content}
	\addcontentsline{toc}{section}{\protect\numberline{}Appendix Content}
	\begin{itemize}
		\item[] \hyperref[sec:supp_alg]{\textbf{Appendix A} The Algorithm}\\
		\item[] \hyperref[sec:supp_rates]{\textbf{Appendix B} Convergence Rates}
		\begin{itemize}
			\item[] \hyperref[sec:supp_strongly_convex]{\textbf{Appendix B.1} Rate of Convergence for Strongly Convex Functions}
			\item[] \hyperref[sec:supp_convex]{\textbf{Appendix B.2} Rate of Convergence for Convex Functions}
			\item[] \hyperref[sec:supp_pl]{\textbf{Appendix B.3} Rate of Convergence for Functions Satisfying the PL Condition}
			\item[] \hyperref[sec:supp_lemmas]{\textbf{Appendix B.4} Common Lemmas}
			\item[] \hyperref[sec:supp_polyak]{\textbf{Appendix B.5} The Polyak Step Size is Bounded}\\
		\end{itemize}
		\item[] \hyperref[sec:supp_rates]{\textbf{Appendix C} Experimental details}\\
		\item[] \hyperref[sec:supp_plots]{\textbf{Appendix D} Plots Completing the Figures in the Main Paper}
		\begin{itemize}
			\item[] \hyperref[sec:supp_exp1]{\textbf{Appendix D.1} Comparison between PoNoS and the state-of-the-art}
			\item[] \hyperref[sec:supp_reset]{\textbf{Appendix D.2} A New Resetting Technique}
			\item[] \hyperref[sec:supp_time]{\textbf{Appendix D.3} Time Comparison}
			\item[] \hyperref[sec:supp_convex_short]{\textbf{Appendix D.4} Experiments on Convex Losses}
			\item[] \hyperref[sec:supp_trans]{\textbf{Appendix D.5} Experiments on Transformers}\\
		\end{itemize}
		\item[] \hyperref[sec:supp_plots]{\textbf{Appendix E} Additional Plots}
		\begin{itemize}
			\item[] \hyperref[sec:supp_c]{\textbf{Appendix E.1} Study on the Choice of $c$: Theory (0.5) vs Practice (0.1)}
			\item[] \hyperref[sec:supp_line_search]{\textbf{Appendix E.2} Study on the Line Search Choice: Various Nonmonotone Adaptations}
			\item[] \hyperref[sec:supp_f_eval]{\textbf{Appendix E.3} Zoom in on the Amount of Backtracks}
			\item[] \hyperref[sec:supp_etamax]{\textbf{Appendix E.4} Study on the Choice of $\etamax$}
			\item[] \hyperref[sec:supp_c_p]{\textbf{Appendix E.5} Study on the Choice of $c_p$: Doubling the Legacy Value}
			\item[] \hyperref[sec:profiling]{\textbf{Appendix E.6} Profiling PoNoS}\\[25\baselineskip]
		\end{itemize}
	\end{itemize}
	\section{The Algorithm}\label{sec:supp_alg}
In this section, we give the details of our proposed algorithm PoNoS.\\ 
Training machine learning models (e.g., neural networks) entails solving the following \textbf{finite sum problem:}
\begin{equation} \label{eq:problem}
	\min_{w\in \mathbb{R}^n} f(w) = \frac{1}{M}\sum_{i=1}^{M} f_i(w), 
\end{equation}
where $w$ is the parameter vector and $f_i$ corresponds to a single instance of the $M$ points in the training set.\\
Given an initial step size $\eta_{k,0}$ and $\delta\in(0,1)$, the \textbf{Stochastic (Amijo) Line Search (SLS)} \citep{vaswani19a} select the smallest $l_k\in\mathbb{N}$ such that $\eta_k = \eta_{k,0} \delta^{l_k}$ satisfies the following condition: 
\begin{equation}\label{eq:armijo}
	\fik(w_k - \eta_k \gradik(w_k)) \leq \fik(w_k) -c \eta_k \| \gradik(w_k) \|^2,
\end{equation}
where $c\in(0,1)$ and $\|\cdot\|$ is the Euclidean norm.\\
The newly proposed \textbf{Stochastic Zhang \& Hager line search} adapted from \citet{zhang04a} is 
\begin{equation}\label{eq:zhang}
	\begin{split}
		&\fik(w_k - \eta_k \gradik(w_k)) \leq C_k -c \eta_k \| \gradik(w_k) \|^2,\\
		C_k=\max &\left\{\tilde{C}_k; \fik(w_k) \right\}, \; \tilde{C}_k = \frac{\xi Q_k C_{k-1} + \fik(w_k)}{Q_{k+1}}, \; Q_{k+1} = \xi Q_k + 1,
	\end{split}
\end{equation}
where $\xi\in[0,1]$, $C_0=Q_0=0$ and $C_{-1} = f_{i_0}(w_0)$.\\
Given $\gamma>1$, the \textbf{resetting/smoothing technique} employed in \citet{vaswani19a} is
\begin{equation}\label{eq:etamax}
	\eta_{k,0} = \eta_{k-1}  \gamma^{b/M},
\end{equation}
where $\gamma>1$ and $b$ is the mini-batch size.\\
Given the step size
\begin{equation}\label{eq:polyak}
	\tilde{\eta}_{k,0}:=\frac{\fik(w_k) - \fik^*}{c_p||\gradik(w_k)||^2} \quad \text{and } c_p\in(0,1),
\end{equation}
we recall \textbf{Stochastic Polyak Step (SPS) size} from \citet{loizou21a}
\begin{equation}\label{eq:loizou}
	\eta_{k,0} = \min \left\{ \tilde{\eta}_{k,0}, \eta_{k-1} \gamma^{b/M}, \etamax \right\} \quad \text{ with $\etamax>0$ and $\tilde{\eta}_{k,0}$ defined in \eqref{eq:polyak},}  
\end{equation}
and its \textbf{non-smoothed version} (used in Algorithm \ref{alg})
\begin{equation}\label{eq:upolyak}
	\eta_{k,0} = \min \left\{ \tilde{\eta}_{k,0}, \etamax \right\} \quad \text{ with $\tilde{\eta}_{k,0}$ defined in \eqref{eq:polyak}.}  
\end{equation}
To employ our new \textbf{resetting technique}, we redefine $\eta_k$ as
\begin{equation}\label{eq:new_etak}
	\eta_k =   \eta_{k,0} \delta^{\bar{l}_k} \delta^{l_k},\quad \text { with } \bar{l}_{k}:= \max \{\bar{l}_{k-1} + l_{k-1} -1, 0\}.
\end{equation}

\begin{algorithm}[H]\label{alg}
	\caption{The POlyak NOnmonotone Stochastic (PoNoS) line search method}
	
	\KwIn{$D=\{(x_i,y_i)\}_{i=1}^M, w_0 \in \R^n, \etamax >0$, $c\in(0,1)$, $c_p\in(0,1), \delta\in(0,1), \xi\in[0,1]$, $b$ mini-batch size, $Q_0=0$, $k=0$}
	
	\For{$epoch = 0, 1, 2, \dots, max\_epoch$}{
		
		\For{$i = 0, 1, 2, \dots, \frac{M}{b}$}{
			
			sample $i_k \subset \{1, \dots, M\}: |i_k| = b$
			
			$\eta_{k,0} = \eqref{eq:upolyak}$
			
			$l_k=0$
			
			\If{k=0}{$C_{-1}=f_{i_0}(w_0)$}
			
			$\tilde{C}_k = \frac{\xi Q_k C_{k-1} + \fik(w_k)}{\xi Q_k + 1}$
			
			$C_k = \max \left\{\tilde{C}_k; \fik(w_k) \right\}$ 
			
			\While{$\fik(w_k - \eta_k \gradik(w_k)) > C_k -c \eta_k \| \gradik(w_k) \|^2$}{
				
				$l_k = l_k+1$
				
				$\eta_k =   \eta_{k,0} \delta^{\bar{l}_k} \delta^{l_k}$ \label{step:etak}
			}
			
			$w_{k+1} = w_k - \eta_k \gradik(w_k)$ \label{step:wk}
			
			$\bar{l}_{k+1}:= \max \{\bar{l}_k + l_k -1, 0\}$
			
			$Q_{k+1} = \xi Q_k + 1$
			
			$k = k+1$
		}
	}
\end{algorithm}
	\section{Convergence Rates}\label{sec:supp_rates}
	
Our results do not prove convergence only for PoNoS, but more in general for methods employing \eqref{eq:zhang} as a line search and a bounded initial step size, i.e., 
\begin{equation}\label{eq:generic_eta}
\eta_{k,0} \in [\etaminn, \etamax] , \quad \text{ with } \etamax>\bar{\eta}^{\text{min}}> 0.
\end{equation}
The next lemma provides the possible range of $\eta_k$ as a consequence of the line search technique. Thanks to the fact that we always have $C_k \geq \fik(w_k)$, Lemma \ref{lemma:lbeta} recovers the monotone range (see Lemma 1 in \citet{vaswani19a}). We say that $f$ is $L$-Lipschitz smooth when $f$ is continuously differentiable with Lipschitz continuous gradient, i.e.  
\begin{equation*}
	\| \grad(x) -\grad(y) \| \leq L \| x -y \| \quad \forall x,y \in \mathbb{R}^n, \quad \text{ with } L>0.
\end{equation*}
\begin{lemma}\label{lemma:lbeta} 
	Let $\fik$ be $\Lik$-Liptschitz smooth. The range of the step size $\eta_k$ returned by \eqref{eq:zhang} and with $\eta_{k,0}$ defined in \eqref{eq:generic_eta} is either
	\begin{equation}\label{eq:eta_k_lb}
		\eta_k \in 
		\begin{cases}
		[\etaminn , \etamax] \qquad &\text{if } l_k=0,\\
		[\etamin, \etamax]  \qquad &\text{if } l_k>0,
		\end{cases}
	\end{equation}
	where $\etamin :=\min\left\{ \frac{2\delta(1-c)}{\Lmax},\etaminn \right\}$ 
	and $\Lmax= \max_{i} L_i$. 
\end{lemma}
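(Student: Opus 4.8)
The plan is to bound $\eta_k$ from above and below separately, splitting on whether $l_k=0$ or $l_k>0$. The upper bound is immediate in both cases: backtracking only multiplies the trial step by $\delta\in(0,1)$, so $\eta_k=\eta_{k,0}\delta^{l_k}\le\eta_{k,0}\le\etamax$ by \eqref{eq:generic_eta}. The lower bound when $l_k=0$ is equally direct, since then $\eta_k=\eta_{k,0}\ge\etaminn$, which already yields the claimed interval $[\etaminn,\etamax]$. So all the work lies in the lower bound for $l_k>0$.

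When $l_k>0$ at least one backtrack has occurred, meaning the previous trial step $\eta:=\eta_{k,0}\delta^{l_k-1}$ failed the Zhang--Hager condition \eqref{eq:zhang}:
\begin{equation*}
\fik(w_k - \eta\,\gradik(w_k)) > C_k - c\,\eta\,\| \gradik(w_k) \|^2.
\end{equation*}
The single place where the nonmonotone structure enters is the observation that $C_k=\max\{\tilde{C}_k;\fik(w_k)\}\ge\fik(w_k)$ by construction, so the failed inequality implies the stronger monotone-type failure with $\fik(w_k)$ in place of $C_k$. From here the argument reduces to exactly the monotone case of Lemma 1 in \citet{vaswani19a}.

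Next I would combine this failed (monotone) inequality with the descent lemma for the $\Lik$-smooth function $\fik$,
\begin{equation*}
\fik(w_k - \eta\,\gradik(w_k)) \le \fik(w_k) - \eta\,\| \gradik(w_k) \|^2 + \tfrac{\Lik}{2}\eta^2\| \gradik(w_k) \|^2 .
\end{equation*}
Chaining the two estimates cancels the $\fik(w_k)$ terms; dividing through by $\eta\,\| \gradik(w_k) \|^2>0$ then gives $(1-c)<\tfrac{\Lik}{2}\eta$, hence $\eta>\tfrac{2(1-c)}{\Lik}\ge\tfrac{2(1-c)}{\Lmax}$. Multiplying by $\delta$ recovers the accepted step $\eta_k=\delta\,\eta>\tfrac{2\delta(1-c)}{\Lmax}\ge\etamin$, which is the desired bound.

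Two points need care. First, the division by $\| \gradik(w_k) \|^2$ is justified precisely because $l_k>0$ forces a nonzero gradient: if $\gradik(w_k)=0$, condition \eqref{eq:zhang} collapses to $\fik(w_k)\le C_k$, which holds automatically since $C_k\ge\fik(w_k)$, so no backtrack would occur. Second, one must check that $\etamin=\min\{2\delta(1-c)/\Lmax,\etaminn\}$ covers both branches, which it does, since $2\delta(1-c)/\Lmax\ge\etamin$ handles $l_k>0$ and $\etaminn\ge\etamin$ handles $l_k=0$. I expect the only real obstacle to be conceptual rather than computational, namely recognizing that enlarging the right-hand side from $\fik(w_k)$ to $C_k$ makes the Armijo-type condition easier to satisfy, so the smallest accepted step is bounded below exactly as in the monotone setting.
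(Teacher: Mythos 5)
Your proof is correct and follows essentially the same route as the paper's: upper bound and the $l_k=0$ case directly from \eqref{eq:generic_eta}, and for $l_k>0$ the failed trial step $\eta_k/\delta$ combined with the descent lemma for $\Lik$-smooth $\fik$ and the observation $C_k\ge\fik(w_k)$ yields $\eta_k\ge 2\delta(1-c)/\Lik\ge 2\delta(1-c)/\Lmax$. Your chaining of the two inequalities is a slightly more direct phrasing of the paper's contradiction argument, and your explicit justification that $l_k>0$ forces $\gradik(w_k)\neq 0$ before dividing is a point the paper leaves implicit.
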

\begin{proof}
Let us denote $g_k:= \gradik(w_k)$. Applying Lemma \ref{lemma:LC1_bound} below on $\fik$, with $y=w_k -\eta_k g_k$ and $x=w_k$ we have
\begin{equation*}
	\begin{split}
		\fik (w_k -\eta_k g_k ) & \leq \fik(w_k) + g_k^T(w_k -\eta_k g_k - w_k) + \frac{\eta_k^2\Lik}{2} \| g_k\|^2\\
		& = \fik(w_k) - \left(\eta_k - \frac{\eta_k^2 \Lik}{2} \right) \|g_k\|^2,\\
	\end{split}
\end{equation*}
which can be rewritten as 
\begin{equation}\label{eq:p_eta}
	\fik (w_k -\eta_k g_k ) \leq p_k(\eta_k), \quad \text{with } p_k(\eta):=  \fik(w_k) - \left(\eta - \frac{\eta^2 \Lik}{2} \right) \|g_k\|^2.
\end{equation}
Note that \eqref{eq:p_eta} is valid for any $\eta$.
Let us rewrite \eqref{eq:zhang} as 
\begin{equation*}
	\fik(w_k - \eta_k g_k) \leq q_k(\eta_k), \quad \text{with } q_k(\eta) :=  C_k -c \eta \| g_k \|^2.
\end{equation*}
Now, the backtracking procedure in \eqref{eq:zhang} admits two possible output:\\
Case 1: $l_k=0$. In this case, we have $\eta_k = \eta_{k,0}$ and thus directly $\eta_k \in [\etaminn , \etamax]$.\\
Case 2: $l_k>0$. In this case, we have $\eta_k < \eta_{k,0}$ with $\fik(w_k - \frac{\eta_k}{\delta} g_k) > q_k(\frac{\eta_k}{\delta})$. Then, we have that $q_k(\frac{\eta_k}{\delta})\leq p_k(\frac{\eta_k}{\delta})$ because $q_k(\frac{\eta_k}{\delta})> p_k(\frac{\eta_k}{\delta})$ would lead to a contradiction. In fact
\begin{equation*}
	\fik \left(w_k - \frac{\eta_k}{\delta} g_k \right) > q_k\left(\frac{\eta_k}{\delta}\right) >  p_k\left(\frac{\eta_k}{\delta}\right)\geq \fik \left(w_k - \frac{\eta_k}{\delta} g_k\right)
\end{equation*}
is false. Thus, it has to be $q_k(\frac{\eta_k}{\delta})\leq p_k(\frac{\eta_k}{\delta})$, from which we get that 
\begin{equation*}
	\fik(w_k) -c \frac{\eta_k}{\delta} \| g_k \|^2 \leq C_k -c \frac{\eta_k}{\delta} \| g_k \|^2 \leq \fik(w_k) - \left(\frac{\eta_k}{\delta} - \frac{\eta_k^2 \Lik}{2\delta^2} \right) \|g_k\|^2
\end{equation*}
and consequently 
\begin{equation*}
	-c \leq - \left(1 - \frac{\eta_k \Lik}{2\delta} \right) \Leftrightarrow \eta_k \geq \frac{2\delta(1-c)}{\Lik},
\end{equation*}
which leads to \eqref{eq:eta_k_lb}.
\end{proof}

One of the challenges of convergence theorems for nonmonotone line search methods is to prove that the sequence of the nonmonotone terms $\{C_k\}$ converges to $f(w^*)$. In order to achieve this, in Lemma \ref{lemma:induction_short} below we prove that $C_k$ and $C_{k-1}$ are lower-bounded by $\fik(w^*)$. Before that, we establish the following auxiliary result.
\begin{lemma}\label{lemma:qk_useful}
From the definition of $Q_k$ in \eqref{eq:zhang}, it follows
\begin{equation}\label{eq:qk}
	1\leq \xi Q_k + 1\leq \frac{1}{1-\xi}.
\end{equation}
and
\begin{equation}\label{eq:frac_qk_ub1}
	\frac{\xi Q_k}{\xi Q_k +1} = \leq  \xi
\end{equation}
\end{lemma}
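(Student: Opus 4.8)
The plan is to reduce both inequalities to a single monotone invariant on the sequence $\{Q_k\}$ and then read off the two displayed bounds. The first observation is that the recurrence $Q_{k+1}=\xi Q_k+1$ from \eqref{eq:zhang} means the quantity $\xi Q_k+1$ appearing in both claims is exactly $Q_{k+1}$, so it suffices to bound $Q_{k+1}$. Throughout I would assume $\xi\in[0,1)$ (the regime forced by the hypotheses of Theorems \ref{thm:strongly_convex_short}--\ref{thm:pl_short}), which is what makes the right-hand side $\frac{1}{1-\xi}$ finite and well-defined.

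The core step is a one-line induction showing $0\le Q_k\le \frac{1}{1-\xi}$ for every $k\ge 0$. The base case is immediate since $Q_0=0$ lies in this interval, using $\frac{1}{1-\xi}\ge 1>0$. For the inductive step, assuming $0\le Q_k\le \frac{1}{1-\xi}$, nonnegativity of $\xi$ gives $Q_{k+1}=\xi Q_k+1\ge 1\ge 0$, while the upper bound propagates via $Q_{k+1}=\xi Q_k+1\le \frac{\xi}{1-\xi}+1=\frac{1}{1-\xi}$. Equivalently, one may unroll the recurrence into the geometric sum $Q_k=\sum_{j=0}^{k-1}\xi^{j}=\frac{1-\xi^{k}}{1-\xi}$ and bound $\xi^{k}\ge 0$, but the induction is cleaner and sidesteps the closed form.

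Having established $1\le Q_{k+1}\le \frac{1}{1-\xi}$, the first display \eqref{eq:qk} follows immediately after substituting $Q_{k+1}=\xi Q_k+1$. For the second display \eqref{eq:frac_qk_ub1}, I would rewrite $\frac{\xi Q_k}{\xi Q_k+1}=1-\frac{1}{\xi Q_k+1}$ and apply the upper bound just proved: since $\xi Q_k+1\le \frac{1}{1-\xi}$, we get $\frac{1}{\xi Q_k+1}\ge 1-\xi$, and therefore $1-\frac{1}{\xi Q_k+1}\le \xi$. Thus the second inequality is a direct algebraic consequence of the upper bound in the first, and no separate argument is needed.

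There is no genuine obstacle here; the result is elementary and serves only as bookkeeping for the later rate proofs. The single point requiring care is the implicit assumption $\xi<1$, which is what guarantees $\frac{1}{1-\xi}$ is meaningful (and that the claimed interval is nonempty, i.e.\ $\frac{1}{1-\xi}\ge 1$); this is exactly what the restriction $\xi\in[0,1)$ provides.
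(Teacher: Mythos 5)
Your proposal is correct and follows essentially the same route as the paper: the paper bounds $Q_{k+1}=\xi Q_k+1=1+\sum_{j=0}^{k}\xi^{j+1}\leq\sum_{j=0}^{\infty}\xi^j=\frac{1}{1-\xi}$ by unrolling the recurrence into a geometric sum (the alternative you mention), while you obtain the identical bound by induction, and your derivation of \eqref{eq:frac_qk_ub1} via $\frac{\xi Q_k}{\xi Q_k+1}=1-\frac{1}{\xi Q_k+1}\leq 1-(1-\xi)=\xi$ is exactly the paper's. Your explicit flagging of the requirement $\xi<1$ is a point the paper leaves implicit.
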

\begin{proof}
From the definition of $Q_{k+1}$ we have 
\begin{equation*}
1\leq \xi Q_k + 1=:Q_{k+1} = 1+ \sum_{j=0}^{k}\xi^{j+1} \leq \sum_{j=0}^{\infty} \xi^j = \frac{1}{1-\xi}.
\end{equation*}
which implies \eqref{eq:qk}. Thus we have 
\begin{equation*}
\frac{\xi Q_k}{\xi Q_k +1} = \frac{\xi Q_k + 1 -1}{\xi Q_k +1} = 1 - \frac{1}{\xi Q_k +1}\leq 1- \frac{1}{\frac{1}{1-\xi}} = \xi.
\end{equation*}
which implies \eqref{eq:frac_qk_ub1} and concludes the proof.
\end{proof}
\noindent We say that $f$ satisfies interpolations if given $w^* \in \displaystyle\argmin_{w\in \R^n} f(w),$ then $w^* \in \displaystyle\argmin_{w\in \R^n} f_i(w)\;\forall 1\leq i\leq M$. 
\begin{lemma}\label{lemma:induction_short}
	Let $C_k$ be defined in \eqref{eq:zhang}. Assuming interpolation, the following bounds hold for all $k\in \mathbb{N}$,
	\begin{equation}\label{eq:induction1}
		C_k - \fik(w^*) \geq 0 \quad\forall k
	\end{equation}
	and
	\begin{equation}\label{eq:induction2}
		C_{k-1} - \fik(w^*) \geq 0 \quad\forall k. 
	\end{equation}
\end{lemma}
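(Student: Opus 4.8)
The plan is to handle the two bounds separately, since \eqref{eq:induction1} is immediate while \eqref{eq:induction2} needs an induction. For \eqref{eq:induction1}, I would simply invoke the definition $C_k=\max\{\tilde{C}_k,\fik(w_k)\}$ from \eqref{eq:zhang}, which gives $C_k\ge\fik(w_k)$, and then interpolation, which (since $w^*$ is a global minimizer of $\fik$) gives $\fik(w_k)\ge\fikofwstar$. Chaining the two yields $C_k-\fikofwstar\ge 0$ for every $k$, with no recursion required.

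For \eqref{eq:induction2} I would induct on $k$. The base case $k=0$ is $C_{-1}=f_{i_0}(w_0)\ge f_{i_0}(w^*)$, again by interpolation. For the inductive step I would write $C_{k-1}=\max\{\tilde{C}_{k-1},\fii{k-1}(w_{k-1})\}$ and expand $\tilde{C}_{k-1}=\frac{\xi Q_{k-1}}{Q_k}C_{k-2}+\frac{1}{Q_k}\fii{k-1}(w_{k-1})$. Using $Q_k=\xi Q_{k-1}+1$ together with Lemma \ref{lemma:qk_useful} (inequality \eqref{eq:frac_qk_ub1}), the two coefficients are nonnegative and sum to one, so $\tilde{C}_{k-1}$ is a convex combination of $C_{k-2}$ and $\fii{k-1}(w_{k-1})$. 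The inductive hypothesis \eqref{eq:induction2} at index $k-1$ is exactly $C_{k-2}\ge\fii{k-1}(w^*)$, and interpolation gives $\fii{k-1}(w_{k-1})\ge\fii{k-1}(w^*)$; hence both the convex combination and the outer maximum are bounded below by $\fii{k-1}(w^*)$, so $C_{k-1}\ge\fii{k-1}(w^*)$.

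The genuinely delicate step, and what I expect to be the main obstacle, is passing from the bound $C_{k-1}\ge\fii{k-1}(w^*)$ that the induction naturally produces to the claimed $C_{k-1}\ge\fikofwstar$: the sample index $i_{k-1}$ baked into $C_{k-1}$ differs from the fresh index $i_k$, and the per-function interpolation inequalities used above cannot by themselves bridge two different functions (indeed, if the $f_i$ had distinct minimal values one could engineer $C_{k-1}<\fikofwstar$). I would therefore close the gap by using interpolation in the strong over-parameterized form in which $w^*$ minimizes every $\fik$ to the \emph{same} optimal value $f^*$ (matching the paper's standing assumption $\fikofwstar=0$ for all $i$), so that $\fii{k-1}(w^*)=\fikofwstar$ and the index-free conclusion follows. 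This index reconciliation is the only point where the argument must go beyond the elementwise interpolation and the convex-combination structure supplied by Lemma \ref{lemma:qk_useful}, and it is where I would focus the care.
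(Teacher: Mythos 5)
Your proposal is correct and, for the substantive bound \eqref{eq:induction2}, follows essentially the same route as the paper: induction on $k$ with base case $C_{-1}=f_{i_0}(w_0)\ge f_{i_0}(w^*)$, the observation that $\tilde{C}_{k-1}$ is a convex combination of $C_{k-2}$ and $f_{i_{k-1}}(w_{k-1})$ (the coefficients $\frac{\xi Q_{k-1}}{\xi Q_{k-1}+1}$ and $\frac{1}{\xi Q_{k-1}+1}$ are nonnegative and sum to one directly from $Q_k=\xi Q_{k-1}+1$, so you do not even need \eqref{eq:frac_qk_ub1} here), and the identification $f_{i_{k-1}}(w^*)=\fik(w^*)$ to reconcile the index mismatch. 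The paper invokes that identity with the same brief justification (``thanks to interpolation''), so your explicit flagging of it as the one place where common minimizers alone would not suffice --- and where equality of the optimal values $\fik(w^*)$ across $i$ (consistent with the paper's standing convention $\fik(w^*)=0$) is genuinely used --- is a careful and accurate reading rather than a gap. The one place you genuinely diverge is \eqref{eq:induction1}: the paper proves it by a separate induction with the same two-case split on whether $C_k=\tilde{C}_k$ or $C_k=\fik(w_k)$, whereas you observe that it follows immediately from $C_k=\max\{\tilde{C}_k,\fik(w_k)\}\ge\fik(w_k)\ge\fik(w^*)$ with no recursion; your shortcut is valid and strictly simpler, and the indices match here so no reconciliation is needed. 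Both approaches prove the same statements; yours trims one of the two inductions at no cost.
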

\begin{proof}
We will prove both statements by induction, starting with \eqref{eq:induction1}. For $k=0$, interpolation yields $f_{i_0} (w_0) \geq f_{i_0} (w^*)$. Assuming now that the statement is valid for $k-1\in \mathbb{N}_0$ (i.e., $C_{k-1} - f_{k-1}(w^*) \geq 0$), let us prove that it is valid also for $k$. If $\tilde{C}_k> \fik(w_k)$, we have
\begin{equation*}
	C_k - \fik(w^*) = \frac{\xi Q_k }{\xi Q_k +1} C_{k-1} + \frac{1}{\xi Q_k +1}\fik(w_k)- \fik(w^*)\geq \frac{\xi Q_k }{\xi Q_k +1} f_{i_{k-1}}(w^*) + \frac{1}{\xi Q_k +1}\fik(w^*)- \fik(w^*) = 0,
\end{equation*} 
where we used the induction hypothesis and the fact that $f_{i_{k-1}}(w^*) = \fik(w^*)$, thanks to interpolation. If $\tilde{C}_k \leq \fik(w_k)$, from interpolation we have
\begin{equation*}
C_k - \fik(w^*) = \fik(w_k) - \fik(w^*) \geq 0.
\end{equation*} 
The last two inequalities prove \eqref{eq:induction1}.
We can follow a similar path for \eqref{eq:induction2}. For $k=0$, from the definition of $C_{-1}$, we have $C_{-1} - f_{i_0} (w^*) = f_{i_0} (w_0)- f_{i_0} (w^*) \geq 0$. Assume now that the statement is valid for $k-1\in\mathbb{N}_0$ (i.e., $C_{k-2} - f_{k-1}(w^*) \geq 0$). Then, if $\tilde{C}_{k-1} > f_{i_{k-1}}(w_{k-1})$, we have 
\begin{equation*}
	\begin{split}
		C_{k-1} - \fik(w^*) &= 
		\frac{\xi Q_{k-1} }{\xi Q_{k-1} +1} C_{k-2} + \frac{1}{\xi Q_{k-1} +1}f_{i_{k-1}}(w_k)- \fik(w^*) \\
		&\geq \frac{\xi Q_{k-1}}{\xi Q_{k-1} +1} f_{i_{k-1}}(w^*) + \frac{1}{\xi Q_{k-1} +1}f_{i_{k-1}}(w^*)- \fik(w^*) = 0,
	\end{split}
\end{equation*} 
where we used again the induction hypothesis and the fact that $f_{i_{k-1}}(w^*) = \fik(w^*)$, thanks to interpolation. If $\tilde{C}_{k-1} \leq f_{i_{k-1}}(w_{k-1})$, from interpolation we have
\begin{equation*}
C_{k-1} - \fik(w^*) = f_{i_{k-1}}(w_k) - f_{i_{k-1}}(w^*) \geq 0,
\end{equation*} 
which concludes the proof.
\end{proof}
\noindent The following Lemma shows the importance of the interpolation property. A similar result can be obtained by replacing the $\Lik$-smoothness assumption with the line search condition \eqref{eq:armijo} or \eqref{eq:zhang} (see the proof of Theorem \ref{thm:strongly_convex} below).
\begin{lemma}
	We assume interpolation and that $\fik$ are $\Lik$-smooth. Then, we obtain
	\begin{equation*}
		\Eik\| \gradik (w_k)\|^2 \leq \Lmax \left(f(w_k)-f(w^*)\right),
	\end{equation*}
where $\Lmax= \max_{i} L_i$.
\end{lemma}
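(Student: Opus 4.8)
The plan is to establish, for each fixed realization of the sampled index $i_k$, the pointwise inequality $\|\gradik(w_k)\|^2 \le 2\Lik\,(\fik(w_k) - \fik(w^*))$, and then to take the conditional expectation $\Eik$, using unbiasedness to turn the right-hand side into a multiple of the full-batch gap $f(w_k) - \fofwstar$.

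The key input is interpolation: since $w^*$ minimizes every $f_i$, the value $\fik(w^*)$ is the global minimum of $\fik$, so $\fik(y) \ge \fik(w^*)$ for all $y \in \R^n$. I would combine this with the quadratic upper bound from $\Lik$-smoothness (Lemma \ref{lemma:LC1_bound}, exactly as already used in the proof of Lemma \ref{lemma:lbeta}), applied to $\fik$ at the shifted point $y = w_k - \tfrac{1}{\Lik}\gradik(w_k)$ with base point $w_k$. This gives
\[
\fik\!\left(w_k - \tfrac{1}{\Lik}\gradik(w_k)\right) \le \fik(w_k) - \tfrac{1}{2\Lik}\|\gradik(w_k)\|^2 .
\]
Lower bounding the left-hand side by $\fik(w^*)$ and rearranging yields $\|\gradik(w_k)\|^2 \le 2\Lik\,(\fik(w_k) - \fik(w^*)) \le 2\Lmax\,(\fik(w_k) - \fik(w^*))$, where the last step uses $\Lik \le \Lmax$.

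Finally I would apply $\Eik[\cdot] = \E[\cdot \mid w_k]$ to both sides. Under this conditional expectation $w_k$ is deterministic and only $i_k$ is random, so unbiasedness gives $\Eik[\fik(w_k)] = f(w_k)$ and $\Eik[\fik(w^*)] = \fofwstar$; the right-hand side therefore collapses to $2\Lmax\,(f(w_k) - \fofwstar)$, which matches the claimed bound up to the factor of two arising from the standard smoothness convention.

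I do not expect a serious obstacle: the argument is a direct combination of smoothness and interpolation. The one point that genuinely needs interpolation is the lower bound $\fik(y) \ge \fik(w^*)$ on the descent step; without it the descent-lemma argument cannot be closed, since $\fik(w^*)$ would no longer be a minimizer value of $\fik$. A secondary subtlety is bookkeeping for the expectation: it is conditional on $w_k$, so all previously realized randomness (including $w_k$ itself) is treated as fixed and only the fresh sample $i_k$ is averaged.
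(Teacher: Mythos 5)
Your proof is correct and follows essentially the same route as the paper: interpolation identifies $\fik(w^*)$ with the global minimum value of $\fik$, the descent lemma applied at $w_k - \tfrac{1}{\Lik}\gradik(w_k)$ gives the per-sample gradient bound, and the conditional expectation $\Eik$ together with unbiasedness converts the right-hand side into the full-batch gap. You are also right to flag the factor of two: the standard argument yields $\Eik\|\gradik(w_k)\|^2 \le 2\Lmax\left(f(w_k)-\fofwstar\right)$, consistent with the paper's own smoothness bound \eqref{eq:lipschitz}, so the constant $\Lmax$ in the statement (and in the paper's proof, which writes $\Lik$ where $2\Lik$ should appear) is missing that factor.
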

\begin{proof}
Let $w^* \in \displaystyle\argmin_{w\in \R^n} f(w)$ and $w_{i_k}^* \in \displaystyle\argmin_{w\in \R^n} \fik(w)$. From interpolation we have that $w^* \in \displaystyle\argmin_{w\in \R^n} \fik(w)$, which means that $\fik(w^*)= \fik(w_{i_k}^*)$. Thus, interpolation and $\Lik$-smoothness of $\fik$ brings to
\begin{equation*}
	\| \gradik (w_k)\|^2 \leq \Lik \left(\fik(w_k)-\fik(w_{i_k}^*)\right) =  \Lik \left(\fik(w_k)-\fik(w^*)\right).
\end{equation*}
Now, by applying the conditional expectation $\Eik$ on the above inequality, we obtain
\begin{equation*}
	\Eik\| \gradik (w_k)\|^2 \leq \Lmax \left(f(w_k)-f(w^*)\right).
\end{equation*}
\end{proof}

\subsection{Rate of Convergence for Strongly Convex Functions}\label{sec:supp_strongly_convex}
In this subsection, we prove a linear rate of convergence in the case of a strongly convex function f.
\begin{theorem}\label{thm:strongly_convex}
	Let $C_k$ and $\eta_k$ be defined in \eqref{eq:zhang}, with $\eta_{k,0}$ defined in \eqref{eq:generic_eta}. We assume interpolation, $\fik$ convex, $f$ $\mu$-strongly convex and $\fik$ $\Lik$-Lipschitz smooth. If $c> \frac{1}{2}$ and $\xi< \frac{1}{\left(1 + \frac{\etamax}{\etamin\left(2c - 1\right)}\right)}$, we have
	\begin{equation*}
		\begin{split}
			\E \left[\| w_{k+1} - w^* \|^2 + a ( C_k - \fofwstar) \right] & \leq d^{k} \left(\| w_0 - w^* \|^2 + a \left(f(w_0) - \fofwstar\right)\right),
		\end{split}
	\end{equation*}
	where $d:=\maximum{(1-\etamin\mu)}{b}\in(0,1)$, $b:=\left(1+\frac{\etamax}{ac}\right)\xi\in(0,1)$, $a:= \etamin\left(2-\frac{1}{c}\right)>0$ with $\etamin$ as defined in Lemma \ref{lemma:lbeta}.
\end{theorem}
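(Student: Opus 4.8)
The plan is to prove a one-step contraction $\Eik[V_{k+1}] \le d\,V_k$ for the coupled Lyapunov function $V_k := \|w_k-w^*\|^2 + a\,(C_{k-1}-\fofwstar)$ and then unroll it; the base case uses $C_{-1}=f_{i_0}(w_0)$ together with $\Eik[f_{i_0}(w_0)]=f(w_0)$. By Lemma~\ref{lemma:induction_short} (and $f_i(w^*)=\fofwstar$ under interpolation) each $C_{k-1}-\fofwstar\ge 0$, so $V_k\ge 0$ and the two sequences $\{w_k\}$ and $\{C_k\}$ are tied together through $V_k$.

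First I would expand $\|w_{k+1}-w^*\|^2 = \|w_k-w^*\|^2 - 2\eta_k\, g_k^\top(w_k-w^*)+\eta_k^2\|g_k\|^2$ with $g_k:=\nabla f_{i_k}(w_k)$. The decisive step is to bound the stochastic term $\eta_k^2\|g_k\|^2$ \emph{without} discharging $\eta_k$ prematurely: the line search \eqref{eq:zhang} gives $c\,\eta_k\|g_k\|^2\le C_k-f_{i_k}(w_{k+1})$, and interpolation gives $f_{i_k}(w_{k+1})\ge\fofwstar$, whence $\eta_k^2\|g_k\|^2\le \tfrac{\eta_k}{c}(C_k-\fofwstar)$. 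I would then replace $C_k-\fofwstar$ by the unified recursion $C_k-\fofwstar\le \xi(C_{k-1}-\fofwstar)+(f_{i_k}(w_k)-\fofwstar)$, which holds in \emph{both} branches of the maximum in \eqref{eq:zhang}: in the branch $C_k=\tilde C_k$ it follows from $\fofwstar$ being a convex combination, the bound $\xi Q_k/(\xi Q_k+1)\le\xi$ of Lemma~\ref{lemma:qk_useful}, and the nonnegativity of Lemma~\ref{lemma:induction_short}; in the branch $C_k=f_{i_k}(w_k)$ it is immediate. Using convexity of $f_{i_k}$ as $f_{i_k}(w_k)-\fofwstar\le g_k^\top(w_k-w^*)$, the resulting $\tfrac{\eta_k}{c}(f_{i_k}(w_k)-\fofwstar)$ piece recombines with the cross term into $-\eta_k\!\left(2-\tfrac1c\right)g_k^\top(w_k-w^*)$, carrying the \emph{same} $\eta_k$.

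Since $c>\tfrac12$ makes $2-\tfrac1c>0$ and $g_k^\top(w_k-w^*)\ge 0$, I can now replace $\eta_k$ by the bounds of Lemma~\ref{lemma:lbeta}: $\eta_k\ge\etamin$ on the descent (producing the coefficient $a=\etamin(2-\tfrac1c)$) and $\eta_k\le\etamax$ on the carryover $\tfrac{\eta_k\xi}{c}(C_{k-1}-\fofwstar)$. Taking $\Eik[\cdot]$ and using unbiasedness $\Eik[g_k]=\nabla f(w_k)$ turns the descent into $-a\,\nabla f(w_k)^\top(w_k-w^*)$, to which I apply $\mu$-strong convexity $\nabla f(w_k)^\top(w_k-w^*)\ge (f(w_k)-\fofwstar)+\tfrac{\mu}{2}\|w_k-w^*\|^2$; this yields simultaneously the contraction entry $1-\etamin\mu$ of $d$ and a spare $-a\,(f(w_k)-\fofwstar)$. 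Finally I add $a\,(C_k-\fofwstar)$ to form $V_{k+1}$ and expand it once more by the unified recursion; the weight $a$ is chosen precisely so the $+a\,(f(w_k)-\fofwstar)$ thus produced cancels the spare $-a\,(f(w_k)-\fofwstar)$, leaving $\Eik[V_{k+1}]\le (1-\etamin\mu)\|w_k-w^*\|^2+\xi\!\left(\tfrac{\etamax}{c}+a\right)(C_{k-1}-\fofwstar)$. Writing the second coefficient as $a\,\big(1+\tfrac{\etamax}{ac}\big)\xi=a\,b$ gives $\Eik[V_{k+1}]\le \max\{1-\etamin\mu,\,b\}\,V_k=d\,V_k$.

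It then remains to check $d\in(0,1)$: substituting $a=\etamin\tfrac{2c-1}{c}$ into $b=(1+\tfrac{\etamax}{ac})\xi$ gives $b=\big(1+\tfrac{\etamax}{\etamin(2c-1)}\big)\xi$, so the hypothesis $\xi<\big(1+\tfrac{\etamax}{\etamin(2c-1)}\big)^{-1}$ is exactly $b<1$, while $1-\etamin\mu\in(0,1)$. The main obstacle, and the reason the deterministic argument of \citet{zhang04a} cannot be reused, is the randomness of $\eta_k$: because $\eta_k$ is $i_k$-measurable it cannot be taken out of $\Eik[\cdot]$, so it must be kept paired between the cross and squared terms (through convexity) and discharged against $\etamin$ or $\etamax$ only after the sign of the accompanying factor is fixed. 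The secondary difficulty is the entanglement of $\{w_k\}$ and $\{C_k\}$, which I resolve with the coupled Lyapunov $V_k$, the tuning of $a$ that cancels the objective-gap terms, and the two-branch analysis of the maximum defining $C_k$.
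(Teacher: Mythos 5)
Your overall architecture matches the paper's: the same Lyapunov function $\|w_{k+1}-w^*\|^2+a(C_k-\fofwstar)$, the same use of the line search to bound $\eta_k^2\|g_k\|^2$ by $\tfrac{\eta_k}{c}(C_k-\fofwstar)$, the same reliance on Lemmas~\ref{lemma:qk_useful} and~\ref{lemma:induction_short}, and the same care in discharging the random $\eta_k$ against $\etamin$ or $\etamax$ only after fixing the sign of its cofactor. Your ``unified recursion'' $C_k-\fofwstar\le\xi(C_{k-1}-\fofwstar)+(\fik(w_k)-\fofwstar)$ is a cleaner packaging of the paper's two-branch analysis and is correct.

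However, there is a genuine quantitative gap: your route does not produce the contraction entry $1-\etamin\mu$ that the theorem states, and your claim that it does is an arithmetic slip. By converting $\tfrac{\eta_k}{c}(\fik(w_k)-\fofwstar)$ into $\tfrac{\eta_k}{c}\,g_k^\top(w_k-w^*)$ via convexity of $\fik$, you collapse the descent into $-\eta_k\bigl(2-\tfrac1c\bigr)g_k^\top(w_k-w^*)\le -a\,g_k^\top(w_k-w^*)$; after taking $\Eik$ and applying strong convexity, the coefficient multiplying $-\tfrac{\mu}{2}\|w_k-w^*\|^2$ is only $a=\etamin\bigl(2-\tfrac1c\bigr)$, so you obtain $1-\tfrac{a\mu}{2}=1-\tfrac{(2c-1)\etamin\mu}{2c}$, which is strictly larger than $1-\etamin\mu$ for every $c\in\bigl(\tfrac12,1\bigr)$ (and the extra $+a(f(w_k)-\fofwstar)$ from expanding $a(C_k-\fofwstar)$ is exactly consumed cancelling the spare $-a(f(w_k)-\fofwstar)$, so it cannot help). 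You therefore prove the recursion with $d$ replaced by $\maximum{1-\tfrac{a\mu}{2}}{b}$, a weaker rate than claimed. The paper avoids this loss by \emph{not} substituting the function gap away: convexity of $\fik$ is used only to certify that the bracket $-2\langle g_k,w_k-w^*\rangle+\tfrac1c(\fik(w_k)-\fofwstar)$ is nonpositive (so that $\eta_k$ can be replaced by $\etamin$), after which the total coefficient on the function gap is $\tfrac{\etamin}{c}+a=2\etamin$, matching the $-2\etamin\langle\cdot\rangle$ term; strong convexity applied to the combined expression $-\langle\grad(w_k),w_k-w^*\rangle+f(w_k)-\fofwstar\le-\tfrac{\mu}{2}\|w_k-w^*\|^2$ with prefactor $2\etamin$ then yields the stated $1-\etamin\mu$. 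With that one modification your proof recovers the theorem exactly.
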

\begin{proof}
From \eqref{eq:zhang} and interpolation we obtain
\begin{equation}\label{eq:initial_strongly_convex}
	\begin{split}
		\| w_{k+1} - w^* \|^2 &= \| w_k -\eta_k \gradik (w_k) - w^* \|^2\\
		& = \| w_k - w^* \|^2 - 2\eta_k \langle \gradik (w_k), w_k -w^* \rangle + \eta_k^2 \| \gradik (w_k) \|^2 \\
		&\leq \| w_k - w^* \|^2 - 2\eta_k \langle \gradik (w_k), w_k -w^* \rangle + \frac{\eta_k}{c} \left( C_k - \fik(w_{k+1}) \right)\\
		&\leq \| w_k - w^* \|^2 - 2\eta_k \langle \gradik (w_k), w_k -w^* \rangle + \frac{\eta_k}{c} \left( C_k - \fikofwstar \right).\\
	\end{split}
\end{equation}
Let us distinguish 2 cases: either 1) $ \fik(w_k) \geq \tilde{C}_k$ and then $C_k= \fik(w_k)$, or 2) $ \fik(w_k) < \tilde{C}_k$  and then $C_k= \tilde{C}_k$. Let us first analyze case 1). With $c> \frac{1}{2}$ and $a= \etamin\left(2 - \frac{1}{c}\right)$, from \eqref{eq:initial_strongly_convex} we have 
\begin{equation*}
\begin{split}
	\| w_{k+1} - w^* \|^2 + a (C_k - \fikofwstar) &\leq \| w_k - w^* \|^2 - 2\eta_k \langle \gradik (w_k), w_k -w^* \rangle + \left(a + \frac{\eta_k}{c}\right) \left( C_k - \fikofwstar \right)\\
	&= \| w_k - w^* \|^2 - 2\eta_k \langle \gradik (w_k), w_k -w^* \rangle  + \frac{\eta_k}{c} \left( \fik(w_k) - \fikofwstar \right)\\
	&\quad + a \left( \fik(w_k) - \fikofwstar \right)\\
	&\leq  \| w_k - w^* \|^2 + \etamin \left[-2\langle \gradik (w_k), w_k -w^* \rangle + \frac{1}{c}\left(\fik(w_k) - \fikofwstar \right)\right]\\
	&\quad + a \left( \fik(w_k) - \fikofwstar \right)\\
	&=  \| w_k - w^* \|^2 + \etamin \left[-2\langle \gradik (w_k), w_k -w^* \rangle + \left(\frac{a}{\etamin} + \frac{1}{c}\right)\left(\fik(w_k) - \fikofwstar \right)\right]\\
	&\leq  \| w_k - w^* \|^2 + 2\etamin \left[-\langle \gradik (w_k), w_k -w^* \rangle + \left(\fik(w_k) - \fikofwstar \right)\right]\\
	&\leq  \| w_k - w^* \|^2 + 2\etamin \left[-\langle \gradik (w_k), w_k -w^* \rangle + \left(\fik(w_k) - \fikofwstar \right)\right]\\
	&\quad + a b (C_{k-1} - \fikofwstar),
\end{split}
\end{equation*}
	where the second inequality follows from $c> \frac{1}{2}$ and from the fact that the term between square brackets is negative, since $\fik$ is a convex function. The third inequality follows from the definition of $a$ and the last inequality follows from \eqref{eq:induction2}. In the following we are going to show that the same bound can also be achieved in case 2).\\
\noindent Let us now analyze case 2). From \eqref{eq:initial_strongly_convex}, \eqref{eq:qk} and \eqref{eq:frac_qk_ub1}, and again from $c> \frac{1}{2}$, convexity of $\fik$ and the definition of $a$, we have 
\begin{equation*}
	\begin{split}
		\| w_{k+1} - w^* \|^2 + a (C_k - \fikofwstar) &\leq \| w_k - w^* \|^2 - 2\eta_k \langle \gradik (w_k), w_k -w^* \rangle + \left(a + \frac{\eta_k}{c}\right) \left( \tilde{C}_k - \fikofwstar \right)\\
		&= \| w_k - w^* \|^2 - 2\eta_k \langle \gradik (w_k), w_k -w^* \rangle + \left(a + \frac{\eta_k}{c}\right) \frac{1}{\xi Q_k +1}\left(\fik(w_k) - \fikofwstar \right)\\
		&\quad +\left(a + \frac{\eta_k}{c}\right) \frac{\xi Q_k}{\xi Q_k +1} \left( C_{k-1} - \fikofwstar \right)\\
		&\leq \| w_k - w^* \|^2 - 2\eta_k \langle \gradik (w_k), w_k -w^* \rangle + 2\etamin \left(\fik(w_k) - \fikofwstar \right)\\
		&\quad + \left(a + \frac{\eta_k}{c}\right) \xi \left( C_{k-1} - \fikofwstar \right)\\
		&\leq \| w_k - w^* \|^2 + 2\etamin \left[-\langle \gradik (w_k), w_k -w^* \rangle + \left(\fik(w_k) - \fikofwstar \right)\right]\\
		&\quad + \left(a + \frac{\eta_k}{c}\right) \xi \left( C_{k-1} - \fikofwstar \right)\\
		&\leq \| w_k - w^* \|^2 + 2\etamin \left[-\langle \gradik (w_k), w_k -w^* \rangle + \left(\fik(w_k) - \fikofwstar \right)\right]\\
		&\quad + a\left(1 + \frac{\etamax}{ac}\right) \xi \left( C_{k-1} - \fikofwstar \right),\\
	\end{split}
\end{equation*}
where the fourth inequality follows from \eqref{eq:eta_k_lb}. By defining $b = \left(1 + \frac{\etamax}{ac}\right) \xi$ we can conclude that the same bound holds in both cases 1) and 2). Let us now show that $b<1$. Under the assumption that $c>\frac{1}{2}$ and $\xi< \frac{1}{\left(1 + \frac{\etamax}{\etamin\left(2c - 1\right)}\right)}$ we have
\begin{equation*}
	\begin{split}
		b=\left(1 + \frac{\etamax}{ac}\right) \xi&= \left(1 + \frac{\etamax}{\etamin\left(2 - \frac{1}{c}\right)c}\right) \xi = \left(1 + \frac{\etamax}{\etamin\left(2c - 1\right)}\right) \xi < 1.
	\end{split}
\end{equation*}
Now, by taking expectation w.r.t. $i_k$ on the common bound achieved in both cases 1) and 2), using \\$\Eik [\gradik(w_k)] = \grad(w_k)$, and applying strong convexity of $f$ we obtain
\begin{equation*}
	\begin{split}
		\Eik \left[\| w_{k+1} - w^* \|^2\right] + a (\Eik\!\left[C_k\right] - \fofwstar ) & \leq \| w_k - w^* \|^2 +  2 \etamin \left[-\langle \grad (w_k), w_k -w^* \rangle + f(w_k) - \fofwstar \right]\\
		& \quad + a b \left( C_{k-1} - \fofwstar \right)\\
		&\leq \| w_k - w^* \|^2 - 2 \etamin\frac{\mu}{2} \| w_k - w^* \|^2 + a b  \left(C_{k-1} - \fofwstar\right)\\
		&= (1- \etamin\mu) \| w_k - w^* \|^2 + a b \left(C_{k-1} - \fofwstar\right)\\
		&\leq  d \left( \| w_k - w^* \|^2 + a \left(C_{k-1} - \fofwstar\right)\right),
	\end{split}
\end{equation*}
where $d: = \maximum{(1- \etamin\mu)}{b}$ and in the first inequality we used the fact that $C_{k-1}$ does not depend on $i_k$. Taking the total expectation gives
\begin{equation*}
	\begin{split}
		\E \left[\| w_{k+1} - w^* \|^2 + a ( C_k - \fofwstar) \right] & \leq d \E \left[ \left( \| w_k - w^* \|^2 + a \left(C_{k-1} - \fofwstar\right)\right) \right].
	\end{split}
\end{equation*}
At this point we can use the above inequality recursively, resulting in
\begin{equation*}
\begin{split}
\E \left[\| w_{k+1} - w^* \|^2 + a ( C_k - \fofwstar) \right] & \leq d^{k} \E \left[ \| w_0 - w^* \|^2 + a \left(C_{-1} - \fofwstar\right)\right]\\
& = d^{k} \left(\| w_0 - w^* \|^2 + a \left(f(w_0) - \fofwstar\right)\right),
\end{split}
\end{equation*}
where in the last equality we have used that $C_{-1} = f_{i_0}(w_0)$.
\end{proof}

\subsection{Rate of Convergence for Convex Functions}\label{sec:supp_convex}
In this subsection, we prove a $O(\frac{1}{k})$ rate of convergence in the case of a convex function.
\begin{theorem}\label{thm:convex}
	Let $C_k$ and $\eta_k$ be defined in \eqref{eq:zhang}, with $\eta_{k,0}$ defined in \eqref{eq:generic_eta}. We assume interpolation, $f$ convex and $\fik$ $\Lik$-Lipschitz smooth. Given a constant $a_1$ such that $0 < a_1 < \left(2 - \frac{1}{c}\right)$, if $c> \frac{1}{2}$ and $\xi< \frac{a_1}{2}$, we have
	\begin{equation*}
		\begin{split}
			\E \left[f(\bar{w}_k) - \fofwstar\right] \leq \frac{d_1}{k}\left(\frac{1}{\etamin}\| w_0 - w^* \|^2 + a_1\left(f(w_0) -f(w^*) \right)\right),\\
		\end{split}
	\end{equation*}
	where $\bar{w}_k = \frac{1}{k}\sum_{j=0}^{k} w_j$ and $d_1 := \frac{c}{c(2 - a_1) -1}>0$. 
\end{theorem}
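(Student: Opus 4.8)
The plan is to follow the architecture of the proof of Theorem~\ref{thm:strongly_convex}, but, since here we only have (global) convexity and not strong convexity, to replace the geometric contraction by a telescoping-and-averaging step. The natural potential is $V_k := \tfrac{1}{\etamin}\| w_k - w^* \|^2 + a_1 (C_{k-1} - \fofwstar)$, which is nonnegative by \eqref{eq:induction2} and whose expectation at $k=0$ equals $\tfrac{1}{\etamin}\| w_0 - w^* \|^2 + a_1(f(w_0) - \fofwstar)$ after using $C_{-1} = f_{i_0}(w_0)$ and $\Eik[\fik] = f$. I would aim to prove the one-step estimate $\E[V_{k+1}] \le \E[V_k] - \big(2 - \tfrac1c - a_1\big)\,\E[f(w_k) - \fofwstar]$; note that $2 - \tfrac1c - a_1 = \tfrac1{d_1}$ (indeed $c\,(2 - \tfrac1c - a_1) = c(2-a_1) - 1$), so positivity of this descent coefficient is exactly what forces $c>\tfrac12$ and $0<a_1<2-\tfrac1c$.

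First I would reuse the opening inequality \eqref{eq:initial_strongly_convex}, which is derived solely from the line search \eqref{eq:zhang} and interpolation and therefore does not rely on strong convexity: $\| w_{k+1} - w^* \|^2 \le \| w_k - w^* \|^2 - 2\eta_k \langle \gradik(w_k), w_k - w^*\rangle + \tfrac{\eta_k}{c}\,(C_k - \fikofwstar)$. I would then split on the definition of $C_k$ exactly as in Theorem~\ref{thm:strongly_convex}. In the case $C_k = \fik(w_k)$, I add $a_1\etamin(C_k - \fikofwstar)$ to both sides, use convexity through $\langle \gradik(w_k), w_k-w^*\rangle \ge \fik(w_k) - \fikofwstar$, factor $\eta_k$ out of the resulting nonpositive bracket so that Lemma~\ref{lemma:lbeta} permits lowering $\eta_k$ to $\etamin$, and append the nonnegative term $a_1\etamin(C_{k-1}-\fikofwstar)$; dividing by $\etamin$ yields precisely $V_{k+1} \le V_k - (2 - \tfrac1c - a_1)(\fik(w_k)-\fikofwstar)$.

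The crux is the case $C_k = \tilde{C}_k$. Here I would expand $\tilde{C}_k - \fikofwstar = \tfrac{1}{\xi Q_k+1}(\fik(w_k)-\fikofwstar) + \tfrac{\xi Q_k}{\xi Q_k+1}(C_{k-1}-\fikofwstar)$, where interpolation identifies $f_{i_{k-1}}(w^*) = \fikofwstar$. After adding $a_1\etamin(\tilde{C}_k-\fikofwstar)$ to both sides, the current-iterate part combines with $-2\eta_k\langle\gradik(w_k),w_k-w^*\rangle$ via convexity and $\tfrac{1}{\xi Q_k+1}\le 1$ from \eqref{eq:qk}, and lowering $\eta_k$ to $\etamin$ reproduces the same descent term $-\etamin(2-\tfrac1c-a_1)(\fik(w_k)-\fikofwstar)$. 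The memory part is the genuine obstacle: bounding $\tfrac{\xi Q_k}{\xi Q_k+1}\le \xi$ via \eqref{eq:frac_qk_ub1} and using $C_{k-1}-\fikofwstar\ge 0$, its coefficient $\big(\tfrac{\eta_k}{c}+a_1\etamin\big)\xi$ must be dominated by the coefficient $a_1\etamin$ that $C_{k-1}$ already carries inside $V_k$, so that both cases collapse to the single estimate above. Forcing $\xi$ small enough is exactly what guarantees this domination; it is the convex analogue of the requirement $b<1$ in Theorem~\ref{thm:strongly_convex} (here surfacing as $b_1 \le 1$), and I expect this coefficient bookkeeping to be the most delicate point.

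To conclude, I would take $\Eik[\cdot]$ (using $\Eik[\fik]=f$ and the independence of $C_{k-1}$ from $i_k$) and then the total expectation to obtain $\E[V_{k+1}] \le \E[V_k] - \tfrac1{d_1}\E[f(w_k)-\fofwstar]$, sum over $j=0,\dots,k$, and drop the nonnegative $\E[V_{k+1}]$ to get $\tfrac1{d_1}\sum_{j=0}^{k}\E[f(w_j)-\fofwstar] \le \E[V_0]$. Finally, convexity of $f$ (Jensen) applied to $\bar{w}_k = \tfrac1k\sum_{j=0}^k w_j$ turns the averaged left-hand side into $\E[f(\bar{w}_k)-\fofwstar]$, delivering the claimed $\tfrac{d_1}{k}$ rate with the stated initial constant. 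Compared with Theorem~\ref{thm:strongly_convex}, the absence of a contraction factor is what makes averaging (rather than recursion) the right final move.
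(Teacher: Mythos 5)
Your overall architecture (split on the two cases of $C_k$, control the memory term via \eqref{eq:frac_qk_ub1} and \eqref{eq:induction2}, telescope, Jensen) is the right family of argument, but the one-step descent inequality you build it on does not hold under the hypotheses of this theorem, for two distinct reasons. First, you invoke per-sample convexity, $\langle \gradik(w_k), w_k-w^*\rangle \ge \fik(w_k)-\fikofwstar$, but Theorem \ref{thm:convex} only assumes $f$ convex, not each $\fik$ (contrast with Theorem \ref{thm:strongly_convex}, where $\fik$ convex is explicitly assumed). Without it, the sign of $\langle \gradik(w_k), w_k-w^*\rangle$ is unknown, so you can neither convert $-2\eta_k\langle \gradik(w_k), w_k-w^*\rangle$ into a multiple of $\fik(w_k)-\fikofwstar$ nor replace the random $\eta_k$ by $\etamin$ in that term. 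The paper's proof is structured precisely to avoid this: it keeps the inner product intact, divides the whole inequality by $2\eta_k$ so that the inner product carries coefficient $1$, takes $\Eik$ to obtain $\langle \grad(w_k), w_k-w^*\rangle$, and only then applies convexity of the full $f$. That normalization step is incompatible with your fixed-weight potential, which is why the paper ends up summing a non-potential inequality rather than contracting/telescoping $V_k$.

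Second, even granting per-sample convexity, the "delicate point" you flag does not close under $\xi<\frac{a_1}{2}$. With the fixed weight $a_1\etamin$ on the memory term, case 2 produces the coefficient $\left(\frac{\eta_k}{c}+a_1\etamin\right)\xi$ in front of $C_{k-1}-\fikofwstar$, and domination by $a_1\etamin$ requires
\begin{equation*}
\xi \;\le\; \frac{a_1 c\,\etamin}{\etamax + a_1 c\,\etamin},
\end{equation*}
a condition involving $\etamax/\etamin$ that is not implied by $\xi<\frac{a_1}{2}$ (take $\etamax\gg\etamin$). The inequality $b_1=\left(1+\frac{1}{a_1c}\right)\xi<1$ is the right condition only when the weight on the $C$-terms scales with $\eta_k$: the paper adds $\eta_k a_1(C_k-\fikofwstar)$, so the $\eta_k$ cancels in the ratio and $\xi<\frac{a_1}{2}$ together with $\frac{1}{c}<2-a_1$ indeed gives $b_1<1$. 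The cost of that $\eta_k$-dependent weight is exactly that no single Lyapunov function telescopes, which forces the paper's subsequent moves (divide by $2\eta_k$, isolate $\langle\grad(w_k),w_k-w^*\rangle$, bound $\frac{1}{\eta_j}\ge\frac{1}{\etamin}$ only at the summation stage, then average with Jensen). To repair your proof you would either have to adopt that $\eta_k$-weighting (and abandon the potential-descent formulation), or add the extra, $\etamax/\etamin$-dependent restriction on $\xi$, which proves a weaker statement than the theorem claims.
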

\begin{proof}
	From \eqref{eq:zhang} and interpolation we obtain
	\begin{equation}\label{eq:initial_convex}
		\begin{split}
			\| w_{k+1} - w^* \|^2 &= \| w_k -\eta_k \gradik (w_k) - w^* \|^2\\
			& = \| w_k - w^* \|^2 - 2\eta_k \langle \gradik (w_k), w_k -w^* \rangle + \eta_k^2 \| \gradik (w_k) \|^2 \\
			&\leq \| w_k - w^* \|^2 - 2\eta_k \langle \gradik (w_k), w_k -w^* \rangle + \frac{\eta_k}{c} \left( C_k - \fik(w_{k+1}) \right)\\
			&\leq \| w_k - w^* \|^2 - 2\eta_k \langle \gradik (w_k), w_k -w^* \rangle + \frac{\eta_k}{c} \left( C_k - \fikofwstar \right).\\
		\end{split}
	\end{equation}

\noindent Let us distinguish 2 cases: either 1) $ \fik(w_k) \geq \tilde{C}_k$ and then $C_k= \fik(w_k)$, or 2) $ \fik(w_k) < \tilde{C}_k$  and then $C_k= \tilde{C}_k$. Let us first analyze case 1). From \eqref{eq:initial_convex} we have 
\begin{equation*}
	\begin{split}
		\| w_{k+1} - w^* \|^2 + \eta_k a_1 (C_k - \fikofwstar) &\leq \| w_k - w^* \|^2 - 2\eta_k \langle \gradik (w_k), w_k -w^* \rangle + \left(\eta_k a_1 + \frac{\eta_k}{c}\right) \left( C_k - \fikofwstar \right)\\
		&=  \| w_k - w^* \|^2 + \eta_k \left[-2\langle \gradik (w_k), w_k -w^* \rangle + \left(a_1 + \frac{1}{c}\right)\left(\fik(w_k) - \fikofwstar \right)\right]\\
		&\leq  \| w_k - w^* \|^2 + \eta_k \left[-2\langle \gradik (w_k), w_k -w^* \rangle + \left(a_1 + \frac{1}{c}\right)\left(\fik(w_k) - \fikofwstar \right)\right]\\
		&\quad +\eta_k a_1 b_1 (C_{k-1} - \fikofwstar),\\
	\end{split}
\end{equation*}
where the second inequality follows from \eqref{eq:induction2} and $b_1 := \left(1 + \frac{1}{a_1c}\right) \xi >0$. The above bound will now be proven also for case 2). From \eqref{eq:initial_convex} we have 
\begin{equation*}
	\begin{split}
		\| w_{k+1} - w^* \|^2 + \eta_k a_1 (C_k - \fikofwstar) &\leq \| w_k - w^* \|^2 - 2\eta_k \langle \gradik (w_k), w_k -w^* \rangle + \left(\eta_k a_1 + \frac{\eta_k}{c}\right) \left( \tilde{C}_k - \fikofwstar \right)\\
		&= \| w_k - w^* \|^2 - 2\eta_k \langle \gradik (w_k), w_k -w^* \rangle + \left(\eta_k a_1 + \frac{\eta_k}{c}\right) \frac{1}{\xi Q_k +1}\left(\fik(w_k) - \fikofwstar \right)\\
		&\quad +\left(\eta_k a_1 + \frac{\eta_k}{c}\right) \frac{\xi Q_k}{\xi Q_k +1} \left( C_{k-1} - \fikofwstar \right)\\
		&\leq \| w_k - w^* \|^2 + \eta_k \left[-2\langle \gradik (w_k), w_k -w^* \rangle + \left(a_1 + \frac{1}{c}\right)\left(\fik(w_k) - \fikofwstar \right)\right]\\
		&\quad + \eta_k a_1 \left(1 + \frac{1}{a_1c}\right) \xi \left( C_{k-1} - \fikofwstar \right),\\
	\end{split}
\end{equation*}
where the second inequality follows from \eqref{eq:qk} and \eqref{eq:frac_qk_ub1}. By defining $b_1 := \left(1 + \frac{1}{a_1c}\right) \xi$ we can conclude that the same bound can be achieved in both cases 1) and 2). Let us now show that $b_1<1$. From the definition of $a_1$, we have $\frac{1}{c}<2 -a_1,$ thus, under the assumption $\xi< \frac{a_1}{2}$ it holds that
\begin{equation*}
	\begin{split}
		b_1=\left(1 + \frac{1}{a_1c}\right) \xi< \left(1 + \frac{2-a_1}{a_1}\right) \xi = \frac{2}{a_1} \xi < 1.
	\end{split}
\end{equation*}
From $b_1<1$, rearranging and dividing the bound found for both cases 1) and 2) by $2\eta_k$, we have the following
\begin{equation*}
	\begin{split}
		\langle \gradik (w_k), w_k -w^* \rangle &\leq \frac{1}{2\eta_k} \left[\| w_k - w^* \|^2 - \| w_{k+1} - w^* \|^2\right] + \left(\frac{a_1}{2} + \frac{1}{2c}\right) \left(\fik(w_k) - \fikofwstar\right)\\
		&\quad + \frac{a_1}{2} \left[\left( C_{k-1} - \fikofwstar \right) - \left(C_k - \fikofwstar\right) \right].\\
	\end{split}
\end{equation*}
Now, by taking expectation w.r.t. $i_k$, we obtain
\begin{equation*}
	\begin{split}
		\langle \grad (w_k), w_k -w^* \rangle &\leq \Eik \left[\frac{1}{2\eta_k} \left(\| w_k - w^* \|^2 - \| w_{k+1} - w^* \|^2\right)\right] +\left(\frac{a_1}{2} + \frac{1}{2c}\right) \left(f(w_k) - \fofwstar\right)\\
		&\quad + \frac{a_1}{2} \left( \left( C_{k-1} - \fofwstar \right) - \left(\Eik[C_k] - \fofwstar\right) \right).\\
	\end{split}
\end{equation*}
By convexity of $f$ we have $\left(f(w_k) - \fofwstar\right)\leq \langle \grad (w_k), w_k -w^* \rangle$, thus, 
\begin{equation*}
	\begin{split}
		f(w_k) - \fofwstar &\leq \Eik \left[\frac{1}{2\eta_k} \left(\| w_k - w^* \|^2 - \| w_{k+1} - w^* \|^2\right)\right] +\left(\frac{a_1}{2} + \frac{1}{2c}\right) \left(f(w_k) - \fofwstar\right)\\
		&\quad + \frac{a_1}{2} \left( \left( C_{k-1} - \fofwstar \right) - \left(\Eik[C_k] - \fofwstar\right) \right).\\
	\end{split}
\end{equation*}
Moreover, noticing that $1 - \left(\frac{a_1}{2} + \frac{1}{2c}\right) > 0$ by the definition of $a_1$, we obtain 
\begin{equation*}
	\begin{split}
		f(w_k) - \fofwstar &\leq \Eik \left[\frac{d_1}{\eta_k} \left(\| w_k - w^* \|^2 - \| w_{k+1} - w^* \|^2\right)\right]\\
		&\quad + d_1a_1\left( \left( C_{k-1} - \fofwstar \right) - \left(\Eik[C_k] - \fofwstar\right) \right),\\
	\end{split}
\end{equation*}
where $d_1 := \frac{1}{2}\frac{1}{1 - \left(\frac{a_1}{2} + \frac{1}{2c}\right)}= \frac{c}{c(2 - a_1) -1}$. Taking the total expectation and summing from $k$ to $0$
\begin{equation*}
	\begin{split}
		\E \left[\sum_{j=0}^{k} f(w_j) - \fofwstar\right] &\leq \E \left[\sum_{j=0}^{k} \frac{d_1}{\eta_j} \left(\| w_j - w^* \|^2 - \| w_{j+1} - w^* \|^2\right)\right]\\
		&\quad + d_1a_1 \E\left[\sum_{j=0}^{k} \left( C_{j-1} - f(w^*) \right) - \left(C_j - \fofwstar\right) \right].\\
	\end{split}
\end{equation*}
Recalling that $\bar{w}_k = \frac{1}{k}\sum_{j=0}^{k} w_j$, it follows from Jensen's inequality that 
\begin{equation*}
\E \left[f(\bar{w}_k) - \fofwstar\right] \leq \E \left[\frac{1}{k}\sum_{j=0}^{k}f(w_j) - \fofwstar\right] = \frac{1}{k} \E \left[\sum_{j=0}^{k} (f(w_j) - \fofwstar)\right].
\end{equation*}
Now, putting together the last two inequalities and defining $\Delta_j:=\| w_j - w^* \|^2$ and $\Gamma_j:=C_j - \fofwstar$, we have 
\begin{equation*}
	\begin{split}
		\E \left[f(\bar{w}_k) - \fofwstar\right] &\leq \frac{1}{k} d_1\E \left[\sum_{j=0}^{k} \frac{1}{\eta_j} \left(\Delta_j - \Delta_{j+1}\right)\right] + \frac{1}{k} d_1a_1 \E\left[\sum_{j=0}^{k} \Gamma_{j-1} - \Gamma_j \right]\\
		&\leq \frac{1}{k} \frac{d_1}{\etamin} \E \left[\Delta_0 - \Delta_k\right] + \frac{1}{k} d_1a_1 \E\left[\Gamma_{-1} - \Gamma_k \right]\\
		&\leq \frac{1}{k} \frac{d_1}{\etamin} \Delta_0 + \frac{1}{k} d_1a_1 \Gamma_{-1}=\frac{d_1}{k}\left(\frac{1}{\etamin}\| w_0 - w^* \|^2 + a_1\left(f(w_0) -f(w^*) \right)\right),\\
	\end{split}
\end{equation*}
where the second inequality follows from the fact that $\eta_k\geq \etamin$ and from simplification of the telescopic sum and the last inequality follows from  $\Delta_k,\Gamma_k>0$ (because of \eqref{eq:induction1} in case of $\Gamma_k$).
\end{proof}

\subsection{Rate of Convergence for Functions Satisfying the PL Condition}\label{sec:supp_pl}
In this subsection, we prove a linear convergence rate in the case of $f$ satisfying a PL condition. We say that a function $f:\mathbb{R}^n\to \mathbb{R}$ satisfies the PL condition if there exists $\mu>0$ such that, $\forall w \in \mathbb{R}^n: \| \grad(w)\|^2 \geq 2\mu (f(w)-\fofwstar)$. From $\fik$ being $\Lik$-Lipschitz smooth $\forall i_k$, it follows that $f$ is also Lipschitz smooth. Let us call $L$ the Lipschitz smoothness constant of $f$ and let us note that $L \leq \frac{1}{M}\sum_{i=1}^{M}L_i\leq \Lmax$,
\begin{theorem}\label{thm:pl}
	Let $C_k$ and $\eta_k$ be defined in \eqref{eq:zhang}, with $\eta_{k,0}$ defined in \eqref{eq:generic_eta}. We assume interpolation, the PL condition on $f$ and that $\fik$ are $\Lik$-Lipschitz smooth. Given $0<a_2:=\frac{4\mu c(1-c) -\Lmax}{4 \delta c (1-c)}+\frac{1}{2\etamax}$ and assuming $\frac{2\delta(1-c)}{\Lmax}<\etaminn, \etamax<\frac{2\delta c(1-c)}{\Lmax -4\mu c(1-c)}$, $\frac{\Lmax}{4\mu}<c<1$ and $\xi<\frac{a_2c}{a_2c + \Lmax}$, we have
	\begin{equation*}
		\begin{split}
			\E \left[f(w_{k+1}) - \fofwstar + a_2\etamax (C_k - \fofwstar) \right] 
			&\leq d_2^k \left(1+a_2\etamax\right) \left(f(w_0) - \fofwstar\right)
		\end{split}
	\end{equation*}
	where $d_2\!:=\!\minimum{\nu}{b_2}\!\in\!(0,1),\,\nu\!:=\!\etamax (\frac{\Lmax -4\mu c(1-c)}{2\delta c(1-c)}+a_2)\!\in\!(0,1), \, b_2\!:=\!\left(1\!+\!\frac{\Lmax}{a_2c}\right) \xi\!\in\!(0,1)$.
\end{theorem}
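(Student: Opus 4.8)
The plan is to reuse the two-step architecture of the proof of Theorem~\ref{thm:strongly_convex}: first produce a single one-step inequality that couples the progress of the function values with the nonmonotone term $C_k$ and is valid in both regimes of \eqref{eq:zhang}, then take conditional and total expectations and unroll. The structural change forced by the absence of convexity is that I can no longer track $\| w_k - w^* \|^2$; instead I follow the Lyapunov quantity $V_k:=f(w_{k+1})-\fofwstar+a_2\etamax\,(C_k-\fofwstar)$ appearing in the statement, and the place where Theorems~\ref{thm:strongly_convex} and~\ref{thm:convex} invoked (strong) convexity of $\fik$ is taken over by the PL inequality $\|\grad(w_k)\|^2\ge 2\mu\,(f(w_k)-\fofwstar)$, exactly as in Theorem~3.6 of \citet{loizou21a}.

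I would start from $L$-smoothness of $f$ (recall $L\le\Lmax$) at $w_{k+1}=w_k-\eta_k\gradik(w_k)$, giving
\begin{equation*}
	f(w_{k+1}) \le f(w_k) - \eta_k\langle\grad(w_k),\gradik(w_k)\rangle + \tfrac{L\eta_k^2}{2}\,\|\gradik(w_k)\|^2 .
\end{equation*}
The second-order term is treated exactly as the $\eta_k^2\|\gradik(w_k)\|^2$ term in \eqref{eq:initial_strongly_convex}: the line search \eqref{eq:zhang} together with interpolation yields $\eta_k\|\gradik(w_k)\|^2\le\frac1c(C_k-\fik(w_{k+1}))\le\frac1c(C_k-\fikofwstar)$, so with $\eta_k\le\etamax$, $L\le\Lmax$ and $C_k-\fikofwstar\ge0$ from \eqref{eq:induction1} it is bounded by $\frac{\Lmax\etamax}{2c}(C_k-\fikofwstar)$. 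I would then add $a_2\etamax(C_k-\fikofwstar)$ to both sides to build the left-hand Lyapunov term and split as before into case~1) $C_k=\fik(w_k)$ and case~2) $C_k=\tilde C_k$; in case~2) I substitute $\tilde C_k$, use \eqref{eq:qk}--\eqref{eq:frac_qk_ub1} to bound the weight $\frac{\xi Q_k}{\xi Q_k+1}$ by $\xi$, and use \eqref{eq:induction2} to keep $C_{k-1}-\fikofwstar\ge0$, reaching a common inequality whose coefficient of $(C_{k-1}-\fikofwstar)$ is $a_2\etamax b_2$ with $b_2=(1+\frac{\Lmax}{a_2c})\xi$.

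The hard part is the first-order term $-\eta_k\langle\grad(w_k),\gradik(w_k)\rangle$. Unlike the convex proofs, I cannot pass it through a convexity bound, and since the accepted $\eta_k$ is itself a function of the sampled index $i_k$ it is correlated with $\gradik(w_k)$, so $\Eik$ does not simply replace $\gradik(w_k)$ by $\grad(w_k)$. My plan is to exploit the guaranteed range $\etamin\le\eta_k\le\etamax$ of Lemma~\ref{lemma:lbeta} (i.e.\ \eqref{eq:eta_k_lb}) to reduce the stochastic step size to its deterministic extremes on the sign-definite contributions obtained after conditioning, and only then apply $\Eik[\gradik(w_k)]=\grad(w_k)$. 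The surviving $-(\mathrm{const})\|\grad(w_k)\|^2$ contribution is converted into a contraction of $f(w_k)-\fofwstar$ through PL, while the constants are pinned down by combining Jensen's inequality $\|\grad(w_k)\|^2\le\Eik\|\gradik(w_k)\|^2$ with the interpolation bound $\Eik\|\gradik(w_k)\|^2\le\Lmax(f(w_k)-\fofwstar)$. This is precisely where the hypotheses $\frac{\Lmax}{4\mu}<c<1$ and $\etamax<\frac{2\delta c(1-c)}{\Lmax-4\mu c(1-c)}$, together with the choice of $a_2$, are consumed: they are exactly what forces the resulting coefficient $\nu=\etamax(\frac{\Lmax-4\mu c(1-c)}{2\delta c(1-c)}+a_2)$ into $(0,1)$.

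Collecting the conditional expectation then yields an inequality of the form
\begin{equation*}
	\Eik[f(w_{k+1})-\fofwstar]+a_2\etamax(\Eik[C_k]-\fofwstar)\le \nu\,(f(w_k)-\fofwstar)+a_2\etamax\,b_2\,(C_{k-1}-\fofwstar),
\end{equation*}
where $C_{k-1}$ is $i_k$-independent and $\fik(w^*)$ has been averaged to $\fofwstar$. Since $\E[f(w_k)-\fofwstar]\ge0$ and $\E[C_{k-1}-\fofwstar]\ge0$ (the latter because $C_{k-1}\ge\fii{k-1}(w_{k-1})$ and $\E[\fii{k-1}(w_{k-1})]=\E[f(w_{k-1})]\ge\fofwstar$), the two factors $\nu$ and $b_2$ combine into the single rate $d_2$ of the statement, giving $\E[V_k]\le d_2\,\E[V_{k-1}]$. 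Unrolling and using $C_{-1}=f_{i_0}(w_0)$ with $\E[C_{-1}]=f(w_0)$ produces the claimed $d_2^k(1+a_2\etamax)(f(w_0)-\fofwstar)$. The only remaining bookkeeping is to verify $b_2<1$ from $\xi<\frac{a_2c}{a_2c+\Lmax}$ and $\nu<1$ from the upper bound on $\etamax$, exactly as $b<1$ was checked in Theorem~\ref{thm:strongly_convex}.
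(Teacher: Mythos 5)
Your overall architecture matches the paper's proof: smoothness of $f$, the line-search bound $\eta_k\|\gradik(w_k)\|^2\leq\frac{1}{c}(C_k-\fik(w_{k+1}))\leq\frac{1}{c}(C_k-\fikofwstar)$ for the quadratic term, the case split on $C_k$ with \eqref{eq:qk}, \eqref{eq:frac_qk_ub1} and \eqref{eq:induction2} producing the coefficient $a_2 b_2$, then PL after conditioning, and the recursive unrolling with $C_{-1}=f_{i_0}(w_0)$. However, the one step you explicitly defer --- the cross term $-\eta_k\langle\grad(w_k),\gradik(w_k)\rangle$ --- is exactly the step your plan does not actually resolve, and as stated it would fail. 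The inner product $\langle\grad(w_k),\gradik(w_k)\rangle$ has no definite sign across realizations of $i_k$, so you cannot ``reduce the stochastic step size to its deterministic extremes'': replacing $\eta_k$ by $\etamin$ requires the factor it multiplies to be nonnegative and by $\etamax$ requires it to be nonpositive, and neither holds sample-wise. Splitting into positive and negative parts leaves an uncontrolled residual $(\etamax-\etamin)\Eik\bigl[\langle\grad(w_k),\gradik(w_k)\rangle\bigr]_-$, and Jensen plus the interpolation bound $\Eik\|\gradik(w_k)\|^2\leq\Lmax(f(w_k)-\fofwstar)$ does not rescue this term nor reproduce the constant $\nu$.

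The paper's resolution is structural and you miss it: it divides the smoothness inequality by $\eta_k$ \emph{before} taking expectations, so the cross term becomes the $\eta_k$-free quantity $-\langle\grad(w_k),\gradik(w_k)\rangle$, whose conditional expectation is exactly $-\|\grad(w_k)\|^2$, to which PL applies. The random step size then only multiplies the sign-definite quantities $\frac{f(w_{k+1})-\fofwstar}{\eta_k}\geq\frac{f(w_{k+1})-\fofwstar}{\etamax}$ and $\frac{f(w_k)-\fofwstar}{\eta_k}\leq\frac{f(w_k)-\fofwstar}{\etamin}$, which is precisely where the $\frac{1}{\etamin}$ inside $\nu=\etamax(\frac{1}{\etamin}+\frac{\Lmax}{2c}+a_2-2\mu)$ and the overall factor $\etamax$ in the Lyapunov weight $a_2\etamax$ come from; your version, which keeps the Lyapunov function in un-divided form, cannot produce these constants. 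The remaining bookkeeping in your proposal ($b_2<1$ from the bound on $\xi$, $\nu\in(0,1)$ from the bounds on $\etamax$ and $c$ together with the choice of $a_2$, and the nonnegativity of $\E[C_{k-1}-\fofwstar]$) is consistent with the paper, but the proof is incomplete without the division-by-$\eta_k$ device or an equivalent decoupling of $\eta_k$ from $\gradik(w_k)$.
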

\begin{proof}
	From the smoothness of $f$ we obtain 
	\begin{equation*}
		\begin{split}
			f(w_{k+1}) &\leq f(w_k) + \langle \grad (w_k), w_{k+1} - w_k \rangle + \frac{L}{2} \| w_{k+1} - w_k \|^2\\
			&= f(w_k) + \eta_k\langle \grad (w_k), \gradik(w_k) \rangle + \frac{L\eta_k^2}{2} \| \gradik(w_k) \|^2
		\end{split}
	\end{equation*}
	We then rearrange, sum $a_2 (C_k - \fikofwstar)$ on both sides and use \eqref{eq:zhang}, to obtain
	\begin{equation}\label{eq:initial_pl}
		\begin{split}
			\frac{f(w_{k+1}) - f(w_k)}{\eta_k} + a_2 (C_k - \fikofwstar) &\leq -\langle \grad (w_k), \gradik(w_k) \rangle + \frac{L\eta_k}{2} \| \gradik(w_k) \|^2\\
			&\quad + a_2 (C_k - \fikofwstar)\\
			&\leq -\langle \grad (w_k), \gradik(w_k) \rangle + (\frac{L}{2c}+a_2) \left( C_k - \fikofwstar \right),
		\end{split}
	\end{equation}
	Let us now distinguish 2 cases: either 1) $ \fik(w_k) \geq \tilde{C}_k$ and then $C_k= \fik(w_k)$, or 2) $ \fik(w_k) < \tilde{C}_k$  and then $C_k= \tilde{C}_k$. Let us first analyze case 1). Assuming $a_2,b_2>0$, from \eqref{eq:initial_pl} we have 
	\begin{equation*}
		\begin{split}
			\frac{f(w_{k+1}) - f(w_k)}{\eta_k} + a_2 (C_k - \fikofwstar) &\leq -\langle \grad (w_k), \gradik(w_k) \rangle + (\frac{L}{2c}+ a_2) \left( \fik(w_k) - \fikofwstar \right)\\
			&\leq -\langle \grad (w_k), \gradik(w_k) \rangle + (\frac{L}{2c}+ a_2) \left( \fik(w_k) - \fikofwstar \right)\\
			& + a_2b_2\left( C_{k-1} - \fikofwstar \right)
		\end{split}
	\end{equation*}
	where the last inequality follows from \eqref{eq:induction2}. The above bound will be now proven also for case 2). From \eqref{eq:initial_pl} we have 
	\begin{equation*}
		\begin{split}
			\frac{f(w_{k+1}) - f(w_k)}{\eta_k} + a_2 (C_k - \fikofwstar) &\leq -\langle \grad (w_k), \gradik(w_k) \rangle + (\frac{L}{2c}+ a_2) \left( \tilde{C}_k - \fikofwstar \right)\\
			&= -\langle \grad (w_k), \gradik(w_k) \rangle + (\frac{L}{2c}+ a_2) \frac{1}{\xi Q_k +1}\left(\fik(w_k) - \fikofwstar \right)\\
			&\quad +(\frac{L}{2c}+ a_2) \frac{\xi Q_k}{\xi Q_k +1} \left( C_{k-1} - \fikofwstar \right)\\
			&\leq -\langle \grad (w_k), \gradik(w_k) \rangle + (\frac{L}{2c}+ a_2)\left(\fik(w_k) - \fikofwstar \right)\\
			&\quad +(\frac{\Lmax}{2c}+ a_2) \xi \left( C_{k-1} - \fikofwstar \right)\\
		\end{split}
	\end{equation*}
	where the second inequality follows from \eqref{eq:qk} and \eqref{eq:frac_qk_ub1} and $L\leq \Lmax$. By defining $b_2 := \left(1 + \frac{\Lmax}{a_2c}\right) \xi$ we can conclude that the same bound can be achieved in both cases 1) and 2). Now, by taking expectation w.r.t. $i_k$ on the common bound achieved in both cases 1) and 2), and applying PL condition, we have
	\begin{equation*}
		\begin{split}
			\Eik \left[\frac{f(w_{k+1}) - f(w_k)}{\eta_k} + a_2 (C_k - \fofwstar) \right] 
			&\leq -\langle \grad (w_k), \grad(w_k) \rangle + (\frac{L}{2c}+ a_2)\left(f(w_k) - \fofwstar \right)\\
			&\quad + a_2b_2 \left( C_{k-1} - \fofwstar \right)\\
			&\leq(\frac{L}{2c}+ a_2-2\mu)\left(f(w_k) - \fofwstar \right)\\
			&\quad + a_2b_2 \left( C_{k-1} - \fofwstar \right),\\
		\end{split}
	\end{equation*}
	where in the first inequality we used the fact that $C_{k-1}$ does not depend on $i_k$ and $\Eik \fik(w^*) = f(w^*) = \Eik f(w^*) $. Thus,
	\begin{equation*}
		\begin{split}
			\Eik \left[\frac{f(w_{k+1}) - \fofwstar}{\eta_k} + a_2 (C_k - \fofwstar) \right] 
			&\leq \Eik \left[\frac{f(w_k) - \fofwstar}{\eta_k} \right]+ (\frac{L}{2c}+ a_2-2\mu)\left(f(w_k) - \fofwstar \right)\\
			&\quad + a_2b_2 \left( C_{k-1} - \fofwstar \right)\\
			&\leq (\frac{1}{\etamin}+\frac{L}{2c}+ a_2-2\mu)\left(f(w_k) - \fofwstar \right)\\
			&\quad + a_2b_2 \left( C_{k-1} - \fofwstar \right)
		\end{split}
	\end{equation*}
	Using $\eta_k \leq \etamax$, $L \leq \Lmax$ and taking the total expectation we obtain
	\begin{equation}\label{eq:final_pl}
		\begin{split}
			\E \left[f(w_{k+1}) - \fofwstar + a_2\etamax (C_k - \fofwstar) \right] 
			&\leq \etamax (\frac{1}{\etamin}+\frac{\Lmax}{2c}+ a_2-2\mu)\E \left[f(w_k) - \fofwstar \right]\\
			&\quad + a_2\etamax b_2 \E\left[ C_{k-1} - \fofwstar \right]
		\end{split}
	\end{equation}
	Defining $\nu :=\etamax (\frac{1}{\etamin}+\frac{\Lmax}{2c}+ a_2-2\mu)$, let us now show that $0<\nu<1$. From the assumption $\etaminn>\frac{2\delta(1-c)}{\Lmax}$, we have $\etamin = \minimum{\frac{2\delta(1-c)}{\Lmax}}{\etaminn} =\frac{2\delta(1-c)}{\Lmax}$, and then 
	\begin{equation*}
		\begin{split}
			\nu=\etamax\left( \frac{\Lmax}{2\delta(1-c)}+\frac{\Lmax}{2c}-2\mu + a_2\right) = \etamax \left(\frac{\Lmax}{2\delta c(1-c)} - 2\mu + a_2\right) = \etamax\left(\frac{\Lmax -4\mu c(1-c)}{2\delta c(1-c)} + a_2\right).
		\end{split}
	\end{equation*}
	Let $a_2:=\frac{4\mu c(1-c) -\Lmax}{4 \delta c (1-c)}+\frac{1}{2\etamax}$, under the assumption $\etamax<\frac{2\delta c(1-c)}{\Lmax -4\mu c(1-c)}$, we have
	\begin{equation*}
		\begin{split}
			a_2 = \frac{4\mu c(1-c) -\Lmax}{4\delta c (1-c)}+\frac{1}{2\etamax}> \frac{4\mu c(1-c) -\Lmax}{4\delta c (1-c)} + \frac{\Lmax -4\mu c(1-c)}{4\delta c(1-c)}=0, 
		\end{split}
	\end{equation*}
	and thus $a_2>0$. Let us now use substitute $a_2$ in the definition of $\nu$, to obtain
	\begin{equation}\label{eq:nu}
		\begin{split}
			\nu= \etamax\left(\frac{\Lmax -4\mu c(1-c)}{2\delta c(1-c)} + a_2\right) = \etamax\left(\frac{\Lmax -4\mu c(1-c)}{4\delta c(1-c)}\right) + \frac{1}{2} 
		\end{split}
	\end{equation}
	Again from $\etamax<\frac{2\delta c(1-c)}{\Lmax -4\mu c(1-c)}$ and \eqref{eq:nu}, we obtain $\nu< \frac{1}{2}+\frac{1}{2} = 1$. Moreover, $\Lmax -4\mu c(1-c)>0$ because it is a quadratic polynomial in $c$, whose $\Delta<0$ since of $\mu<\Lmax$. Thus, together with $\etamax>0$ and $4\delta c(1-c)>0$, from \eqref{eq:nu} we achieve $\nu >0$.
	Regarding $b_2$, we have $b_2>0$ because $a_2>0$. Moreover, by assuming $\xi<\frac{a_2c}{a_2c + \Lmax}$, it also follows that 
	\begin{equation*}
		\begin{split}
			b_2 := \left(1 + \frac{\Lmax}{a_2c}\right) \xi = \frac{a_2c + \Lmax}{a_2c}\xi < 1.
		\end{split}
	\end{equation*}
	At this point, we need to ensure that $\frac{2\delta(1-c)}{\Lmax}  < \etaminn < \etamax<\frac{2\delta c(1-c)}{\Lmax -4\mu c(1-c)}$ or equivalently 
	\begin{equation*}
		\begin{split}
			0 &< \frac{1}{\Lmax} \left(- 1 + \frac{c\Lmax}{\Lmax -4\mu c(1-c)} \right)\\
			& = \frac{1}{\Lmax} \left(\frac{ c\Lmax - \Lmax +4\mu c(1-c)}{\Lmax -4\mu c(1-c)} \right)\\
			& = \frac{1}{\Lmax} \left(\frac{ -4\mu c^2 + (4\mu +\Lmax)c - \Lmax }{ 4\mu c^2 -4\mu c + \Lmax } \right).
		\end{split}
	\end{equation*}
	In particular, we can solve the inequality for $c$ and find that the numerator is positive for $\frac{\Lmax}{4\mu}<c<1$ and that the denominator is always positive since $\mu<\Lmax$ (as above).\\
	To conclude, we can use \eqref{eq:final_pl} recursively from $k$ to $0$ and obtain 
	\begin{equation*}
		\begin{split}
			\E \left[f(w_{k+1}) - \fofwstar + a_2\etamax (C_k - \fofwstar) \right] 
			&\leq d_2^k \E \left[f(w_0) - \fofwstar  + a_2\etamax \left(C_{-1} - \fofwstar\right) \right]\\
			&= d_2^k \left(1+a_2\etamax\right) \left(f(w_0) - \fofwstar\right)
		\end{split}
	\end{equation*}
	where $d_2:= \minimum{\nu}{b_2} < 1$ and in the last equality we have used that $C_{-1} = f_{i_0}(w_0)$.
\end{proof}

	\subsection{Common Lemmas}\label{sec:supp_lemmas}
	
	\begin{lemma}\label{lemma:LC1_bound}
		Let $f$ be $L$-Lipschitz smooth. Then
		\begin{equation}\label{eq:LC1_bound}
			f(y) \leq f(x) + \grad(x)^T (y-x) +\frac{L}{2} \| y-x\|^2.
		\end{equation}
	\end{lemma}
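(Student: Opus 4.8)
The plan is to prove this standard descent inequality by integrating the gradient along the line segment joining $x$ and $y$. First I would introduce the auxiliary scalar function $g(t) := f(x + t(y-x))$ for $t \in [0,1]$, which is continuously differentiable since $f$ is, with derivative $g'(t) = \grad(x + t(y-x))^T (y-x)$ by the chain rule. The fundamental theorem of calculus then gives the exact identity $f(y) - f(x) = g(1) - g(0) = \int_0^1 \grad(x + t(y-x))^T (y-x)\, dt$.

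Next I would subtract the linear term $\grad(x)^T(y-x)$ from both sides and rewrite the right-hand side as $\int_0^1 \left[\grad(x + t(y-x)) - \grad(x)\right]^T (y-x)\, dt$. Applying Cauchy--Schwarz inside the integral and then the $L$-Lipschitz smoothness of the gradient, I would bound the integrand by $\|\grad(x + t(y-x)) - \grad(x)\|\, \|y-x\| \leq L\, t\, \|y-x\|^2$, where I used that the displacement from $x$ at parameter $t$ has norm $t\|y-x\|$.

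Finally, integrating the bound $L t \|y-x\|^2$ over $t \in [0,1]$ yields $\frac{L}{2}\|y-x\|^2$, which is exactly the claimed gap between $f(y)$ and its first-order expansion at $x$. This is a textbook result, so I do not expect any genuine obstacle; the only points requiring care are the correct chain-rule form of $g'$ and tracking the factor $t$ in the Lipschitz bound, which together produce the characteristic constant $\frac{1}{2}$. Because the inequality follows from an exact integral identity, the smoothness of $f$ guaranteed by the hypothesis is all that is needed and no convexity is invoked.
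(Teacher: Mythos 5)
Your proposal is correct and follows essentially the same route as the paper's own proof: both write $f(y)-f(x)$ as the line integral $\int_0^1 \grad\bigl((1-t)x+ty\bigr)^T(y-x)\,dt$, subtract the linear term $\grad(x)^T(y-x)$, apply Cauchy--Schwarz and the $L$-Lipschitz continuity of the gradient to bound the integrand by $Lt\|y-x\|^2$, and integrate to obtain the factor $\frac{L}{2}$. No gaps; this matches the paper's argument step for step.
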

	\begin{proof}
		From the mean value theorem and differentiability of $f$ we have 
		\begin{equation*}
			\begin{split}
				f (y) & = f(x) + \int_{0}^{1} \grad((1-t)x + t y)^T\! (y-x) \; dt\\
				& = f(x) + \int_{0}^{1} \grad((1-t)x + t y)^T\! (y-x) - \grad(x)^T(y-x) \; dt + \grad(x)^T(y-x)\\ 
				&\leq f(x) + \int_{0}^{1} \| \grad((1-t)x + t y) - \grad(x)\|\! \cdot\! \|y-x\| \; dt + \grad(x)^T(y-x)\\  
				& \leq \fik(x) + \int_{0}^{1} L \| t(y-x) \|\! \cdot\! \|y-x\| \; dt + \grad(x)^T(y-x)\\
				& = f(x) + L\|y-x\|^2 \cdot \frac{t^2}{2} \Big|_0^1 + \grad(x)^T(y-x)\\
				& = f(x) + \grad(x)^T(y-x) + \frac{L}{2} \|y-x\|^2,
			\end{split}
		\end{equation*}
		where the second inequality follows from the Lipschitz continuity of $\grad$.
	\end{proof}

\begin{lemma}\label{lemma:dai02_32}
	Let $f$ be $L$-Lipschitz smooth and strongly convex. Then
	\begin{equation}\label{eq:dai02_32}
		f(x) - f(x^*)\leq \frac{1}{2\mu} \| \grad (x) \|^2.
	\end{equation}
\end{lemma}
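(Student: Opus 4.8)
The plan is to prove that $\mu$-strong convexity by itself already yields the stated bound, which is precisely the PL inequality for $f$; the $L$-smoothness hypothesis will not actually be needed. The starting point is the defining inequality of $\mu$-strong convexity,
\[
	f(y) \geq f(x) + \grad(x)^T (y-x) + \frac{\mu}{2}\|y-x\|^2 \qquad \forall\, x,y \in \R^n .
\]
The right-hand side is a strictly convex quadratic in $y$ lying below $f$ everywhere, so the idea is to minimize both sides over $y$ and exploit that the global minimum of $f$ is attained at $x^*$.

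First I would fix $x$ and treat the right-hand side as a function $g(y):=f(x)+\grad(x)^T(y-x)+\frac{\mu}{2}\|y-x\|^2$. Solving $\nabla_y g(y)=\grad(x)+\mu(y-x)=0$ gives the unique minimizer $y=x-\frac{1}{\mu}\grad(x)$, and substituting it back (a routine completion of the square) yields the closed form $\min_{y} g(y)=f(x)-\frac{1}{2\mu}\|\grad(x)\|^2$.

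Since $f(y)\geq g(y)$ holds pointwise, passing to the minimum over $y$ preserves the inequality, giving $\min_{y} f(y)\geq \min_{y} g(y)$. As $x^*$ is a global minimizer of $f$, the left-hand side equals $f(x^*)$, so that $f(x^*)\geq f(x)-\frac{1}{2\mu}\|\grad(x)\|^2$; rearranging produces exactly \eqref{eq:dai02_32}.

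The only mildly delicate step is the transition from the pointwise bound $f\geq g$ to the comparison of the two minima, together with the identification $\min_{y} f(y)=f(x^*)$. Both are immediate once one observes that $g$ is a global quadratic lower bound on $f$ and that $x^*$ minimizes $f$; no part of the computation is heavy, and the smoothness assumption is superfluous here (presumably retained only for uniformity with the neighboring lemmas).
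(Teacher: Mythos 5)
Your proposal is correct and follows essentially the same route as the paper's proof: minimize the quadratic lower bound furnished by $\mu$-strong convexity over $y$, obtain the minimizer $y = x - \frac{1}{\mu}\grad(x)$, and compare minima using that $x^*$ minimizes $f$. Your side remark that the $L$-smoothness hypothesis is superfluous here is also accurate.
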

\begin{proof}
	From the strong convexity of $f$ we have 
	\begin{equation*}
		f(y) \geq r(x,y) :=  f(x) + \grad(x)^T(y-x) + \frac{\mu}{2} \| y-x \|^2.
	\end{equation*}
	We now minimize both sides of the above inequality w.r.t. $\!y$. In particular, we differentiate $r(x,y)$ w.r.t. $\!y$ 
	and obtain 
	\begin{equation*}
		\frac{\partial \,r(x,y)}{\partial y} = \grad(x) + \mu (y-x) = 0 \Leftrightarrow y^* = x - \frac{1}{\mu} \grad(x)
	\end{equation*}
	which means that 
	\begin{equation*}
		r(x,y^*) = f(x) - \frac{1}{\mu} \| \grad(x) \|^2 + \frac{1}{2\mu} \| \grad(x) \|^" = f(x) - \frac{1}{2\mu} \| \grad(x) \|^2.
	\end{equation*}
	Thus, since  $\min_y f(y) \geq \min_y r(x,y)$,
	we can conclude that
	\begin{equation}\label{eq:pl}
		f(x^*) \geq f(x) - \frac{1}{2\mu} \| \grad(x) \|^2.
	\end{equation}
\end{proof}
	
	\begin{lemma}\label{lemma:lipschitz}
		Let $f$ be $L$-Lipschitz smooth. Then
		\begin{equation}\label{eq:lipschitz}
			\frac{1}{2L} \| \grad(x)\|^2 \leq f(x) - f(x^*).
		\end{equation}
	\end{lemma}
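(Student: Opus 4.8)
The plan is to mirror the structure of Lemma \ref{lemma:dai02_32}, but to exploit the \emph{upper} bound coming from smoothness rather than the lower bound coming from strong convexity. The crucial observation is that this inequality needs only $L$-smoothness together with the existence of a global minimizer $x^*$; convexity plays no role. Concretely, I would apply the descent inequality of Lemma \ref{lemma:LC1_bound}, which gives, for every $y$,
\[
f(y) \leq f(x) + \grad(x)^T(y-x) + \frac{L}{2}\|y-x\|^2.
\]

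The right-hand side is a strictly convex quadratic in $y$, so I would minimize it explicitly. Differentiating yields the minimizer $y^* = x - \frac{1}{L}\grad(x)$ (the gradient step of length $1/L$). Substituting this back, the linear term contributes $-\frac{1}{L}\|\grad(x)\|^2$ and the quadratic term contributes $\frac{1}{2L}\|\grad(x)\|^2$, so
\[
f(y^*) \leq f(x) - \frac{1}{L}\|\grad(x)\|^2 + \frac{1}{2L}\|\grad(x)\|^2 = f(x) - \frac{1}{2L}\|\grad(x)\|^2.
\]

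To finish, I would use that $x^*$ is a global minimizer of $f$, hence $f(x^*) \leq f(y^*)$. Chaining this with the previous display gives $f(x^*) \leq f(x) - \frac{1}{2L}\|\grad(x)\|^2$, and rearranging produces the claimed bound $\frac{1}{2L}\|\grad(x)\|^2 \leq f(x) - f(x^*)$. There is essentially no real obstacle here; the only step requiring thought is the choice of $y^*$ (equivalently, minimizing the quadratic upper bound), which is exactly the dual move to the strong-convexity minimization carried out in Lemma \ref{lemma:dai02_32}.
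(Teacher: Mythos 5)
Your proposal is correct and is essentially identical to the paper's argument: the paper also proves this by applying Lemma \ref{lemma:LC1_bound} and then ``repeating the argument of Lemma \ref{lemma:dai02_32}'', i.e., minimizing the quadratic upper bound over $y$ at $y^* = x - \frac{1}{L}\grad(x)$ and using $f(x^*) \leq f(y^*)$. You simply spell out the steps the paper leaves implicit.
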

	\begin{proof}
		We can repeat the same argument of Lemma \ref{lemma:dai02_32} in the following inequality (obtained applying Lemma \ref{lemma:LC1_bound})
		\begin{equation*}
			f(y) \leq f(x) + \grad(x)^T(y-x) + \frac{L}{2} \| y-x \|^2,
		\end{equation*}
		and get that 
		\begin{equation*}
			f(x^*) \leq f(x) - \frac{1}{2L} \| \grad(x) \|^2,
		\end{equation*}
	which concludes the proof.
	\end{proof}

	\subsection{The Polyak Step Size is Bounded}\label{sec:supp_polyak}
	In this subsection, we show that the Polyak step size \eqref{eq:upolyak} is bounded. In particular, this step is capped at $\etamax>0$, so it is bounded from above by $\etamax$. By definition of \eqref{eq:upolyak}, interpolation and Lemma \ref{lemma:lipschitz} applied on $\fik$ we get 
\begin{equation*}
\eta_{k,0}\geq\frac{\fik(w_k) - \fik^*}{c_p||\gradik(w_k)||^2} \geq \frac{\frac{1}{2\Lik}||\gradik(w_k)||^2}{c_p||\gradik(w_k)||^2}\geq \frac{1}{2c_p\Lmax}.
\end{equation*}

	\section{Experimental Details}\label{sec:supp_experiments}
	The \texttt{PyTorch} \citep{paszke19a} code to reproduce our results can be found at \url{https://github.com/leonardogalli91/PoNoS}. Also, PoNoS is there available as a \texttt{torch.optim.Opimizer}. Experiments are conducted on a machine with an NVIDIA A100 PCIe GPU with 40 GB of memory.\\
Problems (dataset, model):
\begin{enumerate}
	\item MNIST, MLP (1 hidden-layer multi-layer perceptron of width 1000) \citep{lecun98a,luo19a};
	\item CIFAR10, ResNet-34 \citep{krizhevsky09a,he16a};
	\item CIFAR10, DenseNet-121 \citep{krizhevsky09a,huang17a};
	\item CIFAR100, ResNet-34 \citep{krizhevsky09a,he16a};
	\item CIFAR100, DenseNet-121 \citep{krizhevsky09a,huang17a};
	\item Fashion MNIST, EFFicientnet-B1 \citep{xiao17a,tan19a};
	\item SVHN, WideResNet \citep{netzer11a,zagoruyko16a};
	\item mushrooms, RBF-kernel model \citep{chang11a};  
	\item rcv1, RBF-kernel model \citep{chang11a};  
	\item ijcnn, RBF-kernel model \citep{chang11a};
	\item w8a, RBF-kernel model \citep{chang11a}; 
	\item PTB, Transformer Encoder \citep{marcus93a,vaswani17a};
	\item Wikitext2, Transformer-XL \citep{dai19a,merity17a}.
\end{enumerate}
All the datasets can be freely obtained respectively through the \texttt{pytorch} package (image classification), or from the LIBSVM repository \citep{chang11a} (binary classification), or from the websites \url{http://www.fit.vutbr.cz/~imikolov/rnnlm/simple-examples.tgz} (PTB) and \url{https://s3.amazonaws.com/research.metamind.io/wikitext/wikitext-2-v1.zip} (Wikitext2). In Table \ref{tab:data}, we report a few information concerning the problems above and the hyper-parameters employed by the algorithms on that problem. In order by column, we report the number of parameters of the model $n$, the number of instances of the train set $M$, the batch size employed by the algorithms $b$, the number of iterations ($\#$ mini-batches) in each epoch $\frac{M}{b}$, the number of instances in the test set, the max amount of epochs and in brackets the corresponding max number of iterations. The combinations of dataset/model have been replaced by the corresponding number in the above listing. 
\begin{table}
\centering
\begin{tabular}{|c|c|c|c|c|c|c|}
	\hline
\diagbox{problem}{data}	& $n$ & $M$ & $b$ & $\frac{M}{b}$ & test size & max epochs (corr. iter.) \\
	\hline
1	& 535818 & 60000 & 128 & 469 & 10000 & 200 (93800)\\
	\hline
2   & 21282122 & 50000 & 128 & 391 & 10000 & 200 (78200)\\
	\hline
3	& 6956298 & 50000 & 128 & 391 & 10000 & 200 (78200)\\
	\hline
4	& 21328292 & 50000 & 128 & 391 & 10000 & 200 (78200)\\
	\hline
5	& 7048548 & 50000 & 128 & 391 & 10000 & 200 (78200)\\
	\hline
6	& 6525418 & 60000 & 128 & 469 & 10000 & 200 (93800)\\
	\hline
7	& 369498 & 73257 & 128 & 573  & 26032 & 200 (114600)\\
	\hline
8	& 112 & 6499 & 100 & 65 & 1625 & 35 (2275)\\
	\hline
9	& 22 & 39992 & 100 & 400 & 9998 & 35 (14000)\\
	\hline
10	& 47236 & 16194 & 100 & 162 & 4048 & 35 (5670)\\
	\hline
11	& 300 & 39799 & 100 & 398 & 9950 & 35 (13930)\\
	\hline
12	& 13828478 & 59712 & 64 & 933 & / & 100 (93300)\\
	\hline
13	& 30725904 & 7296 & 64 & 114 & / & 100 (11400)\\
	\hline
\end{tabular}
\caption{Information on the problems and on some of the hyper-parameters of the algorithms. In order by column: number of parameters of the model $n$, number of instances of the train set $M$, batch size $b$, number of iterations ($\#$ mini-batches) in each epoch $\frac{M}{b}$, number of instances in the test set, max amount of epochs (corresponding max amount of iterations).}\label{tab:data}
\end{table}

\noindent The numerical results report various measures:
\begin{itemize}
	\item train loss: the full-batch loss on the training set;
	\item test accuracy: accuracy on the test set;
	\item average step size: average of all the mini-batch step sizes within the epoch;
	\item initial step size: average of all the mini-batch initial step sizes within the epoch;
	\item gradient norm: average of all the mini-batch gradient norms within the epoch;
	\item $\#$ backtracks: the total number of backtracking steps required within the epoch;
	\item runtime: wall clock time of each epoch.
\end{itemize}
Many of the plots in this paper have been created by averaging 5 runs that differ from each other only on the random seed randomizing the algorithms. The shaded error bars in the plots correspond to the standard deviation from the mean of the 5 runs.
\par If not specified differently, the following is the setting of hyper-parameters used for Algorithm \ref{alg}
\begin{equation*}
\delta = 0.5,\quad \xi=1, \quad \etamax=10,\quad c=0.5,\quad c_p=0.1.
\end{equation*}
The performance of PoNoS is not sensitive to hyper-parameters. In fact, the same values work across experiments and there is no need to fine-tune them. Most of PoNoS's hyperparameters are set to standard values, while others are either inherited by recent papers or fixed by the theory:
\begin{itemize}
	\item $\delta=0.5$, classical cut of the step \citep{nocedal06a}. SLS employs an unusual value of $0.9$ and this choice is connected to the use of their resetting technique \eqref{eq:etamax}. We checked the results of SLS with $\delta=0.5$ and they indeed turned out to not be as good as with $\delta=0.9$.
	\item $\xi=1$, the fully nonmonotone version of \citet{zhang04a}.
	\item $\etamax=10$, very classical value \citep{vaswani19a,loizou21a}. We conducted an ablation study in Section \ref{sec:supp_etamax}, which shows that larger values of $\etamax$ do not have a remarkable impact on the results. These results show that PoNoS is more robust than SLS and SPS to these changes.
	\item $c=0.5$, suggested by the theory. Both our theory and that of \citet{vaswani19a} suggest employing $0.5$ for $c$, rather than the classical $0.1$ or lower \citep{nocedal06a}. The numerical results in Section \ref{sec:supp_c} support this choice. In particular, they show in Figure \ref{fig:convex} that $c=0.1$ might bring PoNoS and its monotone counterpart to diverge.
	\item $c_p=0.1$, half of the inherited value \citep{loizou21a}. In \citet{loizou21a}, the value $0.2$ was suggested for SPS, however in our case, the initial step size is not the final step since the backtracking procedure might reduce this value to its half (or less). For this reason, we decided to employ a step that is initially double that of SPS. The results show that PoNoS|0.1 is consistently better than PoNoS|0.2 (see Section \ref{sec:supp_c_p}). Thus, we also checked whether SPS|0.1 would be consistently better than the original SPS, but this is not the case.
\end{itemize}
All the other optimizers were used with their default hyper-parameters and without any weight decay. The implementation of SGD and Adam is provided by \texttt{pytorch} \citep{paszke19a} and their learning rates were selected through a separate grid search on each of the problems. These values are collected in Table \ref{tab:grid}, where again the names are replaced by the above numbers. For the classification problems (1-11) the grid search was $\{10^{-1}, 10^{-2}, 10^{-3}, 10^{-4}\}$, while for training the language models we employed the larger grid $\{5, 2.5, 1\}\times \{10^{-1}, 10^{-2}, 10^{-3}, 10^{-4}\}$.
\begin{table}
\hspace{-3mm}
\begin{tabular}{|c|c|c|c|c|c|c|c|c|c|c|c|c|c|}
	\hline
	\diagbox{method}{problem} & 1 & 2 & 3 & 4 & 5 & 6 & 7 & 8 & 9 & 10 & 11 & 12 & 13 \\
	\hline
	SGD	& $0.1$ & $0.1$ & $0.1$ & $0.1$ & $0.1$ & $0.1$ & $0.1$ & $0.1$  & $0.1$  & $0.1$  & $0.1$  & 0.5 & 0.25 \\
	\hline
	Adam & $10^{-4}$ & $10^{-3}$ & $10^{-3}$ & $10^{-3}$ & $10^{-3}$ & $10^{-3}$ & $10^{-3}$ & $0.1$  & $0.1$ & $0.1$ & $0.1$ & $2.5\!\cdot\!10^{-4}$ & $2.5\!\cdot\!10^{-4}$ \\
	\hline
\end{tabular}
\caption{Learning rates for SGD and Adam obtained through a grid-search procedure.}\label{tab:grid}
\end{table}
	\section{Plots Completing the Figures in the Main Paper}\label{sec:supp_plots}
	\subsection{Comparison between PoNoS and the state-of-the-art}\label{sec:supp_exp1}
	See Figure \ref{fig:exp1} for the complete results relative to Figure 1 of the main paper. From Figure \ref{fig:exp1} we can additionally observe that on some problems (e.g., \ress$ $ and \denses) the step size yielded by the Polyak formula \eqref{eq:loizou} is very big and grows very fast. The algorithm SPS controls this step by reducing it to \eqref{eq:etamax}. This step does not depend on the Polyak rule and it does not employ the local information provided by $\fik(w_k)$ and $\gradik(w_k)$. Instead, it grows exponentially controlled by \eqref{eq:etamax}. On the other hand, the step size of PoNoS is always a scaled version of \eqref{eq:loizou}.

\renewcommand{\dir}{exp1/}
\renewcommand{\imgS}{0.30}
\begin{figure}[!h] 
	\hspace{-9mm}
	\begin{minipage}{0.9\textwidth}
		\renewcommand{\model}{mlp}
		\renewcommand{\modelname}{mlp}
		\subfloat{\includegraphics[width=\imgS\linewidth]{\dir/\model/train_loss}}
		\subfloat{\includegraphics[width=\imgS\linewidth]{\dir/\model/val_acc_focus}}
		\subfloat{\includegraphics[width=\imgS\linewidth]{\dir/\model/agv_step_size}}
		\subfloat{\includegraphics[width=\imgS\linewidth]{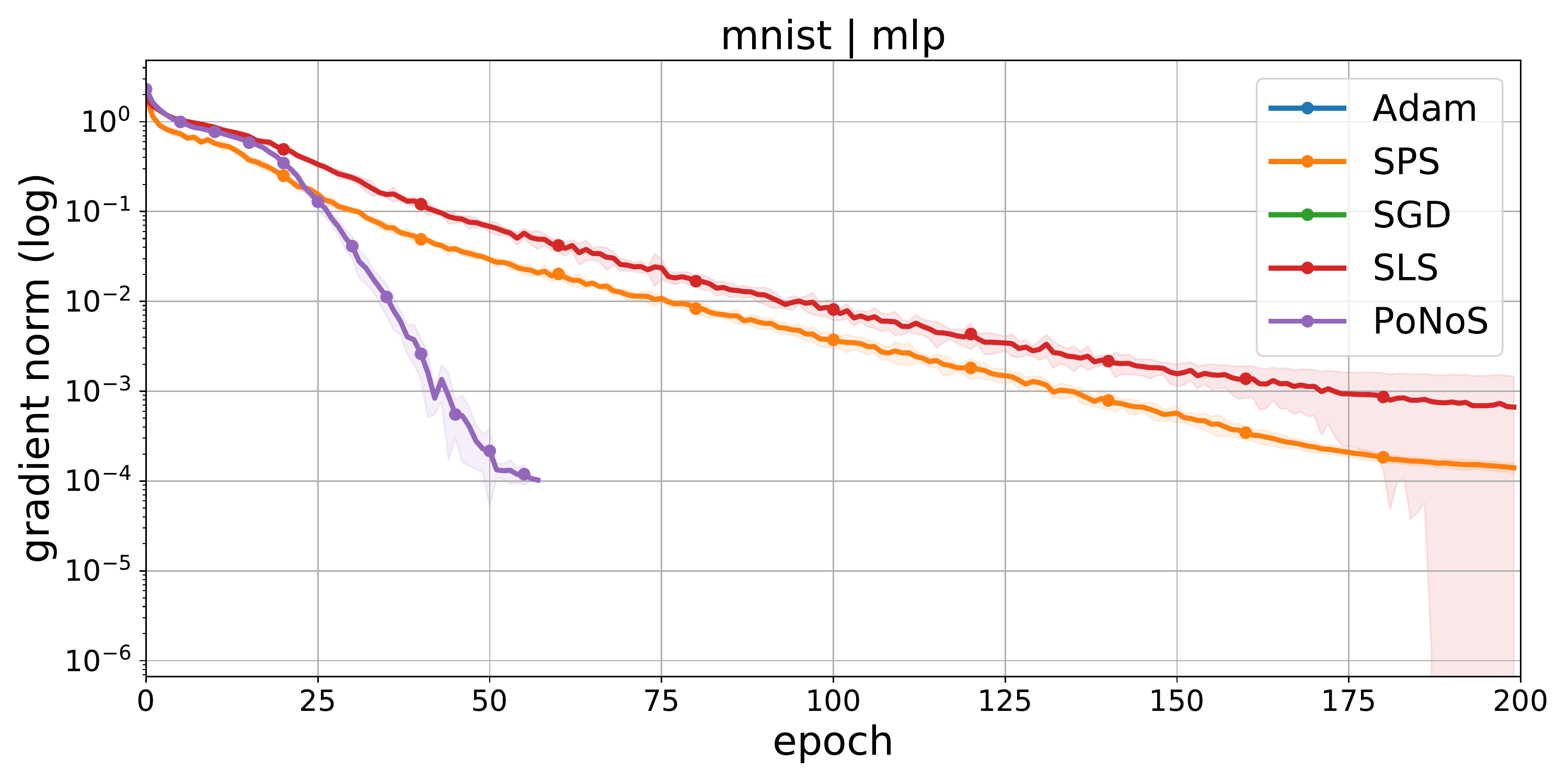}}\\
		\renewcommand{\model}{cifar10_resnet}
		\renewcommand{\modelname}{cifar10\_resnet}
		\subfloat{\includegraphics[width=\imgS\linewidth]{\dir/\model/train_loss}}
		\subfloat{\includegraphics[width=\imgS\linewidth]{\dir/\model/val_acc_focus}}
		\subfloat{\includegraphics[width=\imgS\linewidth]{\dir/\model/agv_step_size}}
		\subfloat{\includegraphics[width=\imgS\linewidth]{\dir/\model/grad_norm}}\\
		\renewcommand{\model}{cifar10_densenet}
		\renewcommand{\modelname}{cifar10\_densenet}
		\subfloat{\includegraphics[width=\imgS\linewidth]{\dir/\model/train_loss}}
		\subfloat{\includegraphics[width=\imgS\linewidth]{\dir/\model/val_acc_focus}}
		\subfloat{\includegraphics[width=\imgS\linewidth]{\dir/\model/agv_step_size}}
		\subfloat{\includegraphics[width=\imgS\linewidth]{\dir/\model/grad_norm}}\\
		\renewcommand{\model}{cifar100_resnet}
		\renewcommand{\modelname}{cifar100\_resnet}
		\subfloat{\includegraphics[width=\imgS\linewidth]{\dir/\model/train_loss}}
		\subfloat{\includegraphics[width=\imgS\linewidth]{\dir/\model/val_acc_focus}}
		\subfloat{\includegraphics[width=\imgS\linewidth]{\dir/\model/agv_step_size}}
		\subfloat{\includegraphics[width=\imgS\linewidth]{\dir/\model/grad_norm}}\\
		\renewcommand{\model}{cifar100_densenet}
		\renewcommand{\modelname}{cifar100\_densenet}
		\subfloat{\includegraphics[width=\imgS\linewidth]{\dir/\model/train_loss}}
		\subfloat{\includegraphics[width=\imgS\linewidth]{\dir/\model/val_acc_focus}}
		\subfloat{\includegraphics[width=\imgS\linewidth]{\dir/\model/agv_step_size}}
		\subfloat{\includegraphics[width=\imgS\linewidth]{\dir/\model/grad_norm}}\\
		\renewcommand{\model}{fashion_effb1}
		\renewcommand{\modelname}{fashion\_effb1}
		\subfloat{\includegraphics[width=\imgS\linewidth]{\dir/\model/train_loss}}
		\subfloat{\includegraphics[width=\imgS\linewidth]{\dir/\model/val_acc_focus}}
		\subfloat{\includegraphics[width=\imgS\linewidth]{\dir/\model/agv_step_size}}
		\subfloat{\includegraphics[width=\imgS\linewidth]{\dir/\model/grad_norm}}\\
		\renewcommand{\model}{svhn_wrn}
		\renewcommand{\modelname}{svhn\_wrn}
		\subfloat{\includegraphics[width=\imgS\linewidth]{\dir/\model/train_loss}}
		\subfloat{\includegraphics[width=\imgS\linewidth]{\dir/\model/val_acc_focus}}
		\subfloat{\includegraphics[width=\imgS\linewidth]{\dir/\model/agv_step_size}}
		\subfloat{\includegraphics[width=\imgS\linewidth]{\dir/\model/grad_norm}}
		\caption{Comparison between the proposed method (PoNoS) and the-state-of-the-art. Each row focus on a dataset/model combination. First column: train loss. Second column: test accuracy. Third column: average step size of the epoch. Fourth column: average gradient norm of the epoch.}\label{fig:exp1}
	\end{minipage}
\end{figure}

	\subsection{A New Resetting Technique}\label{sec:supp_reset}
	See Figure \ref{fig:reset} for the complete results relative to Figure 2 of the main paper. From Figure \ref{fig:reset}, we can additionally observe that the step size yielded by PoNoS is generally small in the initial phase, it grows in the intermediate phase and reaches $\etamax$ towards the end. This behavior can be also noticed in the amount of backtracks of PoNoS\_reset0 in the rightmost column of Figure \ref{fig:reset}. The number of backtracks is conspicuous at the beginning, rare in the intermediate phase and (almost) zero in the local phase. On the other hand, the resetting technique introduced in PoNoS maintains the amount of backtracks to be limited by 1 per iteration (on average), while still yielding the same step size. When PoNoS\_reset0 starts to reduce the amount of backtracks, also PoNoS does the same. 
\par The step size behavior described above and the corresponding amount of backtracks are in accordance with the intuition and the theory. Intuitively, a small step size allows the algorithm to proceed more cautiously in the global phase. A larger step size allows the algorithm to converge faster once it gets closer to the local phase. In fact, for achieving local Q-superlinear convergence, the theory predicts that the line search should always accept a new step size in the  local phase \citep{nocedal06a}.

\renewcommand{\dir}{reset/}
\renewcommand{\imgS}{0.30}
\begin{figure}[!h]
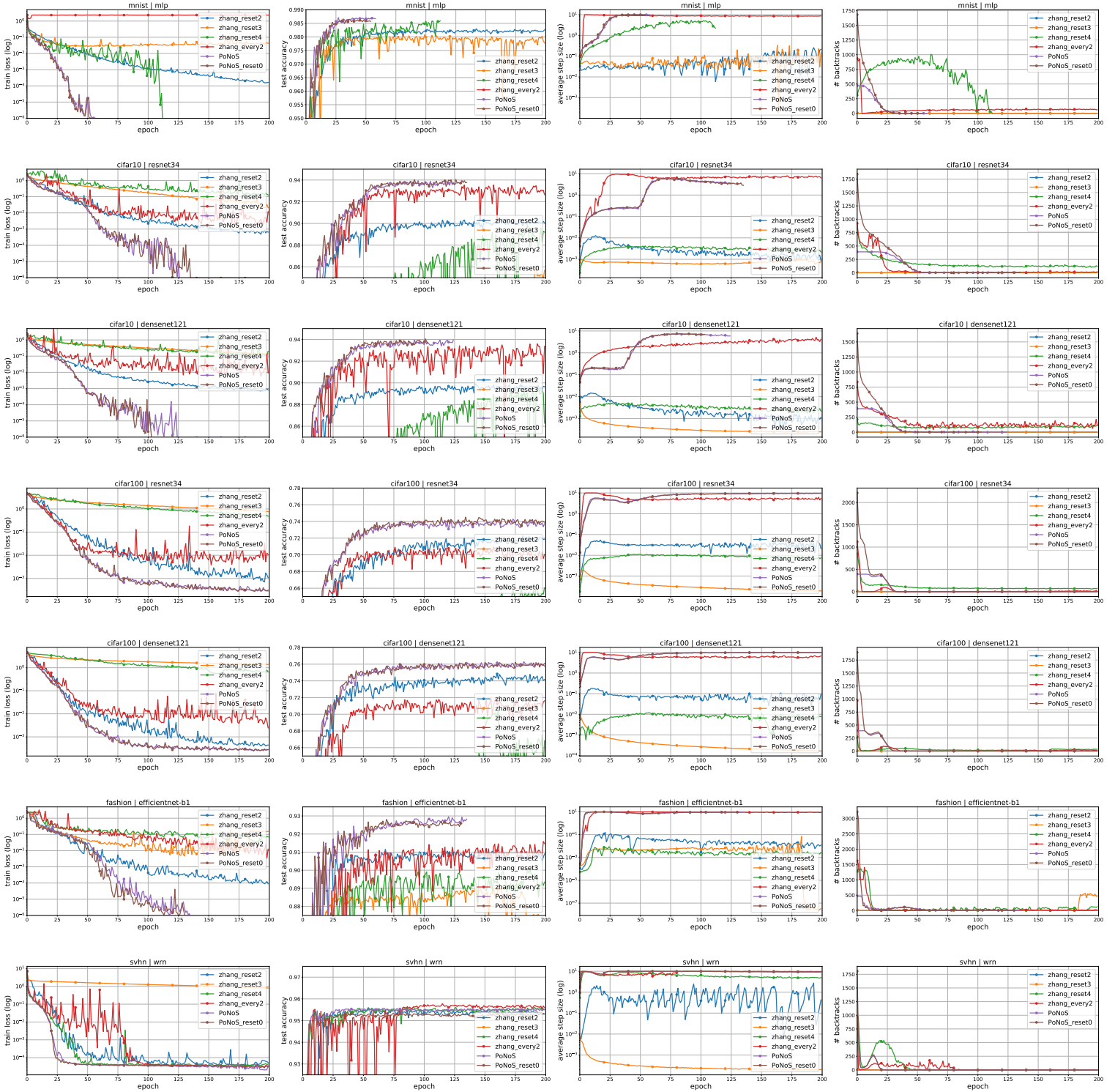
 
\hspace{-9mm}
	\begin{minipage}{0.9\textwidth}
		\renewcommand{\model}{mlp}
		\renewcommand{\modelname}{mlp}
		\subfloat{\includegraphics[width=\imgS\linewidth]{\dir/\model/train_loss}}
		\subfloat{\includegraphics[width=\imgS\linewidth]{\dir/\model/val_acc_focus}}
		\subfloat{\includegraphics[width=\imgS\linewidth]{\dir/\model/agv_step_size}}
		\subfloat{\includegraphics[width=\imgS\linewidth]{\dir/\model/backtracks}}\\
		\renewcommand{\model}{cifar10_resnet}
		\renewcommand{\modelname}{cifar10\_resnet}
		\subfloat{\includegraphics[width=\imgS\linewidth]{\dir/\model/train_loss}}
		\subfloat{\includegraphics[width=\imgS\linewidth]{\dir/\model/val_acc_focus}}
		\subfloat{\includegraphics[width=\imgS\linewidth]{\dir/\model/agv_step_size}}
		\subfloat{\includegraphics[width=\imgS\linewidth]{\dir/\model/backtracks}}\\
		\renewcommand{\model}{cifar10_densenet}
		\renewcommand{\modelname}{cifar10\_densenet}
		\subfloat{\includegraphics[width=\imgS\linewidth]{\dir/\model/train_loss}}
		\subfloat{\includegraphics[width=\imgS\linewidth]{\dir/\model/val_acc_focus}}
		\subfloat{\includegraphics[width=\imgS\linewidth]{\dir/\model/agv_step_size}}
		\subfloat{\includegraphics[width=\imgS\linewidth]{\dir/\model/backtracks}}\\
		\renewcommand{\model}{cifar100_resnet}
		\renewcommand{\modelname}{cifar100\_resnet}
		\subfloat{\includegraphics[width=\imgS\linewidth]{\dir/\model/train_loss}}
		\subfloat{\includegraphics[width=\imgS\linewidth]{\dir/\model/val_acc_focus}}
		\subfloat{\includegraphics[width=\imgS\linewidth]{\dir/\model/agv_step_size}}
		\subfloat{\includegraphics[width=\imgS\linewidth]{\dir/\model/backtracks}}\\
		\renewcommand{\model}{cifar100_densenet}
		\renewcommand{\modelname}{cifar100\_densenet}
		\subfloat{\includegraphics[width=\imgS\linewidth]{\dir/\model/train_loss}}
		\subfloat{\includegraphics[width=\imgS\linewidth]{\dir/\model/val_acc_focus}}
		\subfloat{\includegraphics[width=\imgS\linewidth]{\dir/\model/agv_step_size}}
		\subfloat{\includegraphics[width=\imgS\linewidth]{\dir/\model/backtracks}}\\
		\renewcommand{\model}{fashion_effb1}
		\renewcommand{\modelname}{fashion\_effb1}
		\subfloat{\includegraphics[width=\imgS\linewidth]{\dir/\model/train_loss}}
		\subfloat{\includegraphics[width=\imgS\linewidth]{\dir/\model/val_acc_focus}}
		\subfloat{\includegraphics[width=\imgS\linewidth]{\dir/\model/agv_step_size}}
		\subfloat{\includegraphics[width=\imgS\linewidth]{\dir/\model/backtracks}}\\
		\renewcommand{\model}{svhn_wrn}
		\renewcommand{\modelname}{svhn\_wrn}
		\subfloat{\includegraphics[width=\imgS\linewidth]{\dir/\model/train_loss}}
		\subfloat{\includegraphics[width=\imgS\linewidth]{\dir/\model/val_acc_focus}}
		\subfloat{\includegraphics[width=\imgS\linewidth]{\dir/\model/agv_step_size}}
		\subfloat{\includegraphics[width=\imgS\linewidth]{\dir/\model/backtracks}}
		\caption{Comparison between different initial step sizes and resetting techniques. Each row focus on a dataset/model combination. First column: train loss. Second column: test accuracy. Third column: average step size of the epoch. Fourth column: cumulative number of backtracks in the epoch.}\label{fig:reset}
	\end{minipage}
\end{figure}
	\subsection{Time Comparison}\label{sec:supp_time}
	See Figure \ref{fig:time} for the complete results relative to Figure 3 of the main paper. All the time measures are obtained by using the \texttt{time.time()} command from the \texttt{time python} package. The cumulative time in the $x$-axis of the first and second columns of Figure \ref{fig:time} has been computed as an average of 5 different runs. More precisely, the time of each epoch have been computed separately for the 5 runs and then averaged, so that the same average time has been assigned to each of the 5 runs. At this point, these averaged per-epoch times have been cumulated along the epochs. The shaded error bars in the plots correspond to the standard deviation from the mean of the loss/accuracy and not to the runtime. The runtime standard deviation is instead reported in the third column of Figure \ref{fig:time}.

\renewcommand{\dir}{time/}
\renewcommand{\imgS}{0.37}
\begin{figure}[!h]
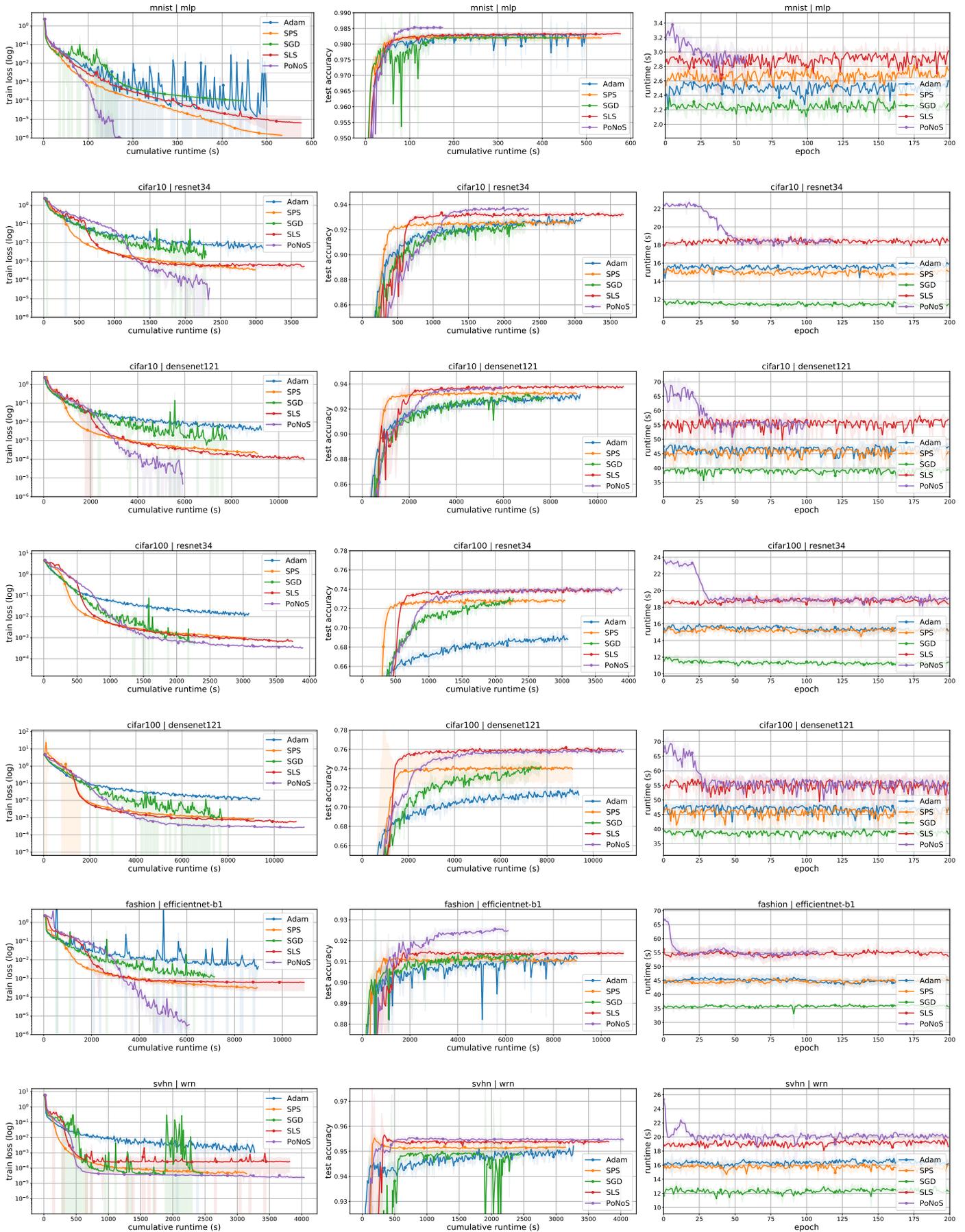
 
	\hspace{-3mm}
	\begin{minipage}{0.9\textwidth}
		\renewcommand{\model}{mlp}
		\renewcommand{\modelname}{mlp}
		\subfloat{\includegraphics[width=\imgS\linewidth]{\dir/\model/train_loss}}
		\subfloat{\includegraphics[width=\imgS\linewidth]{\dir/\model/val_acc_focus}}
		\subfloat{\includegraphics[width=\imgS\linewidth]{exp1/\model/train_epoch_time}}\\
		\renewcommand{\model}{cifar10_resnet}
		\renewcommand{\modelname}{cifar10\_resnet}
		\subfloat{\includegraphics[width=\imgS\linewidth]{\dir/\model/train_loss}}
		\subfloat{\includegraphics[width=\imgS\linewidth]{\dir/\model/val_acc_focus}}
		\subfloat{\includegraphics[width=\imgS\linewidth]{exp1/\model/train_epoch_time}}\\
		\renewcommand{\model}{cifar10_densenet}
		\renewcommand{\modelname}{cifar10\_densenet}
		\subfloat{\includegraphics[width=\imgS\linewidth]{\dir/\model/train_loss}}
		\subfloat{\includegraphics[width=\imgS\linewidth]{\dir/\model/val_acc_focus}}
		\subfloat{\includegraphics[width=\imgS\linewidth]{exp1/\model/train_epoch_time}}\\
		\renewcommand{\model}{cifar100_resnet}
		\renewcommand{\modelname}{cifar100\_resnet}
		\subfloat{\includegraphics[width=\imgS\linewidth]{\dir/\model/train_loss}}
		\subfloat{\includegraphics[width=\imgS\linewidth]{\dir/\model/val_acc_focus}}
		\subfloat{\includegraphics[width=\imgS\linewidth]{exp1/\model/train_epoch_time}}\\
		\renewcommand{\model}{cifar100_densenet}
		\renewcommand{\modelname}{cifar100\_densenet}
		\subfloat{\includegraphics[width=\imgS\linewidth]{\dir/\model/train_loss}}
		\subfloat{\includegraphics[width=\imgS\linewidth]{\dir/\model/val_acc_focus}}
		\subfloat{\includegraphics[width=\imgS\linewidth]{exp1/\model/train_epoch_time}}\\
		\renewcommand{\model}{fashion_effb1}
		\renewcommand{\modelname}{fashion\_effb1}
		\subfloat{\includegraphics[width=\imgS\linewidth]{\dir/\model/train_loss}}
		\subfloat{\includegraphics[width=\imgS\linewidth]{\dir/\model/val_acc_focus}}
		\subfloat{\includegraphics[width=\imgS\linewidth]{exp1/\model/train_epoch_time}}\\
		\renewcommand{\model}{svhn_wrn}
		\renewcommand{\modelname}{svhn\_wrn}
		\subfloat{\includegraphics[width=\imgS\linewidth]{\dir/\model/train_loss}}
		\subfloat{\includegraphics[width=\imgS\linewidth]{\dir/\model/val_acc_focus}}
		\subfloat{\includegraphics[width=\imgS\linewidth]{exp1/\model/train_epoch_time}}
		\caption{Time comparison (s) between the proposed method (PoNoS) and the-state-of-the-art. Each row focus on a dataset/model combination. First column: train loss vs cumulative runtime. Second column: test accuracy vs cumulative runtime. Third column: per-epoch runtime.}\label{fig:time}
	\end{minipage}
\end{figure}
	\subsection{Experiments on Convex Losses}\label{sec:supp_convex_short}
	See Figure \ref{fig:convex_short} for the complete results relative to the convex experiments of Figure 4 of the main paper. Only the training sets available in the LIBSVM library were used for these datasets. The $80\%$ split of the data was used as a training set and $20\%$ split as the test set. For the RBF kernel bandwidth, we employed the parameters suggested in \citet{vaswani19a}, that is $\{0.5, 0.25, 0.05, 20\}$ respectively for mushrooms, rcv1, ijcnn and w8a. We did not use any bias parameter in these experiments. Furthermore, given the convexity of these problems, $\etamax$ is not needed and it has been set to $\infty$.

In Figure \ref{fig:convex_short}, the three measures are reported by iterations and not by epochs. To avoid large fluctuations in the plots, train loss and step size have been smoothed using an exponential moving average ($\beta=0.9$). On the other hand, the test accuracy is only computed at the beginning of each epoch and the same value is reported till the next epoch. In Figure \ref{fig:convex_short}, we can make additional observations which are not visible in Figure 4 of the main paper:
\begin{itemize}
\item PoNoS achieves the lowest loss value on \rcvone$ $ and \ijcnn$ $. Regarding the test accuracy, PoNoS achieves always the highest score, apart from \rcvone$ $ on which it loses $~0.5$ points w.r.t. SLS and SPS. 
\item SLS and SPS behave very similarly on all the datasets. They achieve the best accuracy on all the problems, but they are both very slow on \mushrooms, \rcvone$ $ and \ijcnn$ $ in terms of loss. This behavior is due to \eqref{eq:etamax}, as it is clear from the third column of Figure \ref{fig:convex_short}. The step size yielded by \eqref{eq:etamax} is often too small and it slowly grows exponentially through the whole optimization process. This choice is suboptimal if compared with the step size yielded by PoNoS.
\end{itemize}

\renewcommand{\dir}{convex_short}
\renewcommand{\imgS}{0.33}
\begin{figure}[!h]
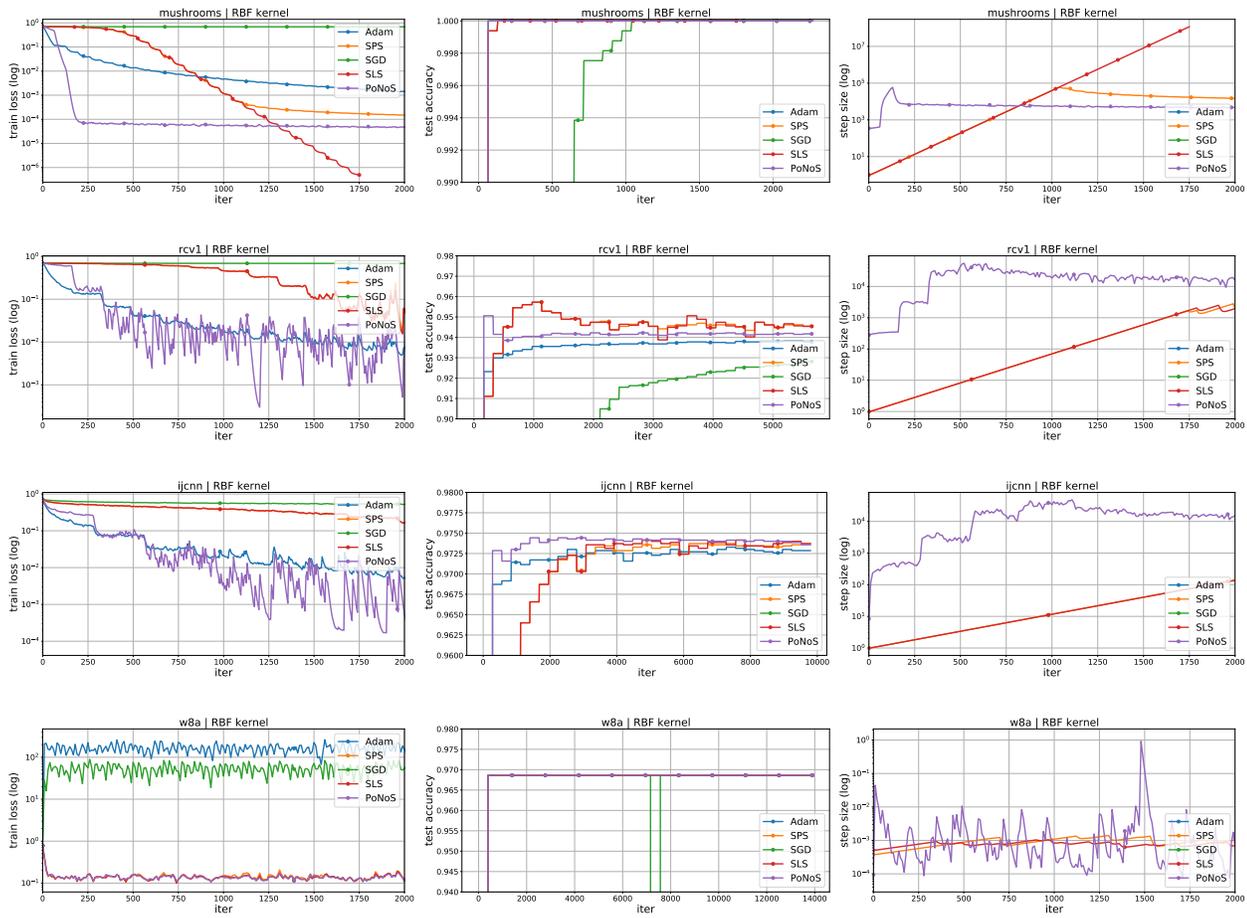
 
	\begin{minipage}{0.9\textwidth}
		\centering
		\renewcommand{\model}{mushrooms}
		\renewcommand{\modelname}{mushrooms}
		\subfloat{\includegraphics[width=\imgS\linewidth]{\dir/\model/smooth_loss}}
		\subfloat{\includegraphics[width=\imgS\linewidth]{\dir/\model/val_acc_focus}}
		\subfloat{\includegraphics[width=\imgS\linewidth]{\dir/\model/smooth_step_size}}\\
		\renewcommand{\model}{rcv1}
		\renewcommand{\modelname}{rcv1}
		\subfloat{\includegraphics[width=\imgS\linewidth]{\dir/\model/smooth_loss}}
		\subfloat{\includegraphics[width=\imgS\linewidth]{\dir/\model/val_acc_focus}}
		\subfloat{\includegraphics[width=\imgS\linewidth]{\dir/\model/smooth_step_size}}\\
		\renewcommand{\model}{ijcnn}
		\renewcommand{\modelname}{ijcnn}
		\subfloat{\includegraphics[width=\imgS\linewidth]{\dir/\model/smooth_loss}}
		\subfloat{\includegraphics[width=\imgS\linewidth]{\dir/\model/val_acc_focus}}
		\subfloat{\includegraphics[width=\imgS\linewidth]{\dir/\model/smooth_step_size}}\\
		\renewcommand{\model}{w8a}
		\renewcommand{\modelname}{w8a}
		\subfloat{\includegraphics[width=\imgS\linewidth]{\dir/\model/smooth_loss}}
		\subfloat{\includegraphics[width=\imgS\linewidth]{\dir/\model/val_acc_focus}}
		\subfloat{\includegraphics[width=\imgS\linewidth]{\dir/\model/smooth_step_size}}
		\caption{Comparison between the proposed method (PoNoS) and the state-of-the-art on convex kernel models for binary classification. Each row focus on a different dataset. First column: exponentially averaged train loss. Second column: test accuracy. Third column: exponentially averaged step size.}\label{fig:convex_short}
	\end{minipage}
\end{figure}
	\subsection{Experiments on Transformers}\label{sec:supp_trans}
	See Figure \ref{fig:trans} for the complete results relative to experiments on transformers of Figure 4 of the main paper. In these experiments, we followed the setup by \citet{kunstner23a}. The word-level language modeling has been addressed with sequences of 35 or 128 tokens respectively for PTB and Wikitext2. In case of PTB, we employed a simple transformer model whose architecture consists of an 200-dimensional embedding layer, 2 transformer layers (2-head self attention, layer normalization, linear$(200\times200)$-ReLU-linear$(200\times200)$, layer normalization) followed by a linear layer. The data processing and the implementation of the Transformer-XL follow \citet{dai19a}. In the case of Wikitext2, the hyperparameters are set as in the ENWIK8 base experiment of \citet{dai19a}, except with the modifications of \citet{zhang20a}, using 6 layers and a target length of 128.

Since Adam is commonly known to achieve better performances than SGD on these networks \citep{kunstner23a}, we develop a preconditioned version of PoNoS, SLS and SPS respectively called PoNoS\_prec, SLS\_prec, and SPS\_prec. These algorithms differ from the originals in three aspects. First, they all employ a direction which is Adam without momentum ($\beta_1=0$). More precisely, the mini-batch gradient $\gradik(w_k)$ in Step \ref{step:wk} of Algorithm \ref{alg} is replaced with $d_k$ as computed below
\begin{equation*}
	\begin{split}
		g_{k} &= \gradik(w_k) \\
		v_{k} &= \beta_2 \cdot v_{k-1} + (1-\beta_2)\cdot g_{k}^2 \\
		\hat v_{k} &=  v_k /(1-\beta^k_2) \\
		d_k &= - g_{k}/(\sqrt{\hat v_{k} }+\epsilon),
	\end{split}
\end{equation*}
where all the operations on vectors are to be considered component-wise, $\beta_2\in(0,1)$ and $\epsilon>0$ is a small constant. We use the default values from Adam for $\beta_2$ and $\epsilon$, that is $\beta_2=0.9$ and $\epsilon=10^{-8}$.
The second difference concerns the line search, as PoNoS\_prec and SLS\_prec exploit a condition that reflects the use of the above preconditioned direction, i.e., 
\begin{equation*}
	\fik(w_k + \eta_k d_k) \leq R_k + c \cdot \eta_k \langle d_k, \gradik(w_k)\rangle = R_k - c \cdot \eta_k  \sum_{j=0}^{n-1}\frac{\left(\gradik(w_k)\right)_j^2}{\left(\sqrt{\hat{v}_{k} }+\epsilon\right)_j},
\end{equation*}
where $(\cdot)_j$ is referring to the $j$-th component of the vector in brackets, and $R_k$ is either $C_k$ in case of PoNoS\_prec or $\fik(w_k)$ in case of SLS\_prec.
The third difference concerns the Polyak step size, which is also computed taking into account the above directional derivative $\langle d_k, \gradik(w_k)\rangle$. In particular, PoNoS\_prec and SPS\_prec replace \eqref{eq:polyak} with the following
\begin{equation*}
	\tilde{\eta}_{k,0}:=\frac{\fik(w_k) - \fik^*}{-c_p\langle d_k, \gradik(w_k)\rangle}.
\end{equation*}

As in \cite{kunstner23a} we focus on the training procedure, however, in the second column of Figure \ref{fig:trans} we report the perplexity of the language model on the test. With respect to Figure 4 of the main paper, in Figure \ref{fig:trans} we also show the gradient norm and the step size yielded by the different methods. From these plots, it is possible to notice that the range in which the step size varies is reduced if compared with that of Figure \ref{fig:exp1}. This behavior depends on the dynamics of the loss and of the norm of the gradient, which are also reduced if compared with those of Figure \ref{fig:exp1}. The reduced ranges of loss and gradient norm might be an issue for Polyak-based algorithms since they rely on these measures. Furthermore, the good results of SLS\_prec on \wiki$ $ suggest that other initial step sizes might also be suited for training transformers. We leave such exploration to future works.

\renewcommand{\dir}{trans}
\renewcommand{\imgS}{0.30}
\begin{figure}[!h]
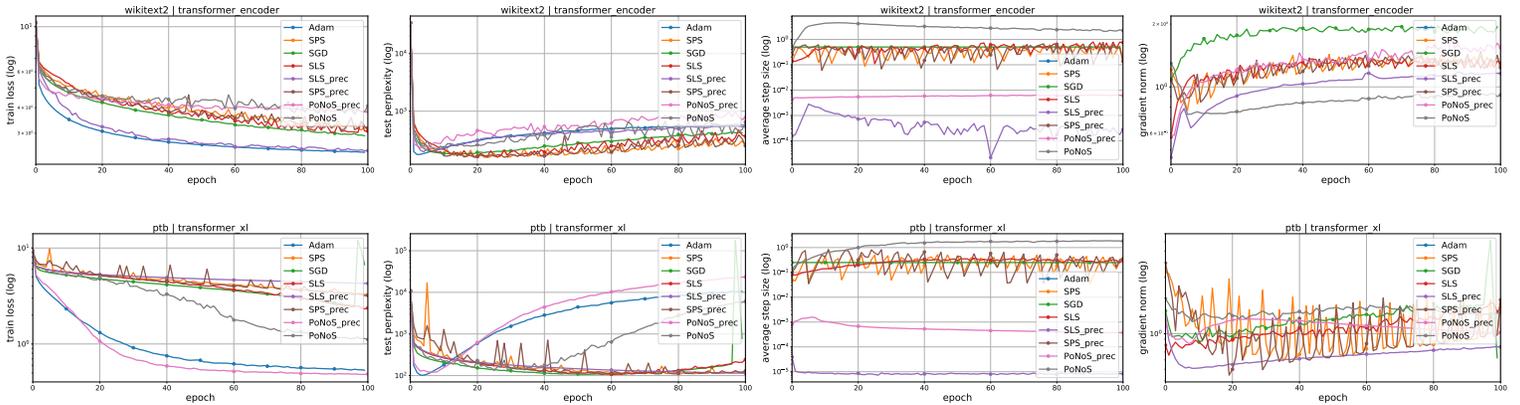
 
	\hspace{-9mm}
	\begin{minipage}{0.9\textwidth}
		\renewcommand{\model}{trans_enc}
		\renewcommand{\modelname}{trans\_enc}
		\subfloat{\includegraphics[width=\imgS\linewidth]{\dir/\model/train_loss}}
		\subfloat{\includegraphics[width=\imgS\linewidth]{\dir/\model/val_metric}}
		\subfloat{\includegraphics[width=\imgS\linewidth]{\dir/\model/agv_step_size}}
		\subfloat{\includegraphics[width=\imgS\linewidth]{\dir/\model/grad_norm}}\\
		\renewcommand{\model}{trans_xl}
		\renewcommand{\modelname}{trans\_xl}
		\subfloat{\includegraphics[width=\imgS\linewidth]{\dir/\model/train_loss}}
		\subfloat{\includegraphics[width=\imgS\linewidth]{\dir/\model/val_metric}}
		\subfloat{\includegraphics[width=\imgS\linewidth]{\dir/\model/agv_step_size}}
		\subfloat{\includegraphics[width=\imgS\linewidth]{\dir/\model/grad_norm}}\\
		\caption{Comparison between the proposed method (PoNoS) and the state-of-the-art on the training of transformers for language modeling tasks. Each row focus on a dataset/model combination. First column: train loss. Second column: train perplexity. Third column: average step size. Fourth column: average gradient norm.}\label{fig:trans}
	\end{minipage}
\end{figure}

	\section{Additional Plots}\label{sec:supp_add_plots}
	\subsection{Study on the Choice of $c$: Theory (0.5) vs Practice (0.1)}\label{sec:supp_c}
	In these experiments, we consider the constant $c$ in \eqref{eq:zhang} and \eqref{eq:armijo}. As described in Section 4 of the main paper, this value is required to be larger than $\frac{1}{2}$ in both Theorems 1 and 2 (and also in the corresponding monotone versions from \citet{vaswani19a}). This value is often considered too large in
practice and the default choice is $0.1$ (also for SLS \citep{vaswani19a}) or smaller \citep{nocedal06a}. The constant $c$ controls the weight of the sufficient decrease in the line search conditions and a smaller value of $c$ corresponds to a looser line search.
In this subsection, we numerically try both $c=0.5$ and $c=0.1$. 
In Figure \ref{fig:theoretically_fixed_c} and \ref{fig:convex} we compare 
\begin{itemize}
	\item monotone|0.1: the monotone stochastic Armijo line search \eqref{eq:armijo} with $c=0.1$. 
	\item monotone|0.5: the monotone stochastic Armijo line search \eqref{eq:armijo} with $c=0.5$.
	\item zhang|0.1: the nonmonotone Zhang and Hager line search \eqref{eq:zhang} with $c=0.1$.
	\item zhang|0.5: the nonmonotone Zhang and Hager line search \eqref{eq:zhang} with $c=0.5$. This setting corresponds to PoNoS.
\end{itemize}
For all the above algorithms the initial step size is \eqref{eq:loizou}. From Figure \ref{fig:theoretically_fixed_c} we can observe that:
\begin{itemize}
	\item zhang|0.5 (PoNoS) achieves the best performances both in terms of train loss and test accuracy. It is the only algorithm able to reduce the train loss below the threshold of $10^{-6}$ on the problems \res$ $ and \ress. Apart from the case of \svhn$ $(where it loses less than $0.5$ points w.r.t. zhang|0.1), it always achieves the best test accuracy.
	\item monotone|0.1 and zhang|0.1 are generally competitive in terms of training loss, but they achieve very poor generalization skills on \ress $ $ and \denses. In particular, their step size on these two problems is growing very rapidly. Already in the first epoch, the average step size is greater than 5, thanks to the fact that both algorithms are reducing the gradient norm rapidly below 1.
	\item The step size yielded by zhang|0.5 starts substantially lower than those of monotone|0.1 and zhang|0.1. Afterwards, the step size increases, then stabilizes, sometimes slightly decreases and finally increases again. This behavior is accomplished thanks to the combination between a larger $c$ and a nonmonotone line search. Also the gradient norm is affected by this choice, since it does not decrease as suddenly as for monotone|0.1 and zhang|0.1. 
	\item monotone|0.5 never achieves the best test accuracy nor the best training loss.  This line search is too strict and yields step sizes that are very small in comparison to those yielded by zhang|0.5.
\end{itemize}	
\par In conclusion, PoNoS (zhang|0.5) employs a large constant $c$ ($0.5$), but it combines that with a nonmonotone line search to achieve the best middle way between strictness and tolerance.

\renewcommand{\dir}{theoretically_fixed_c/}
\renewcommand{\imgS}{0.30}
\begin{figure}[!h]
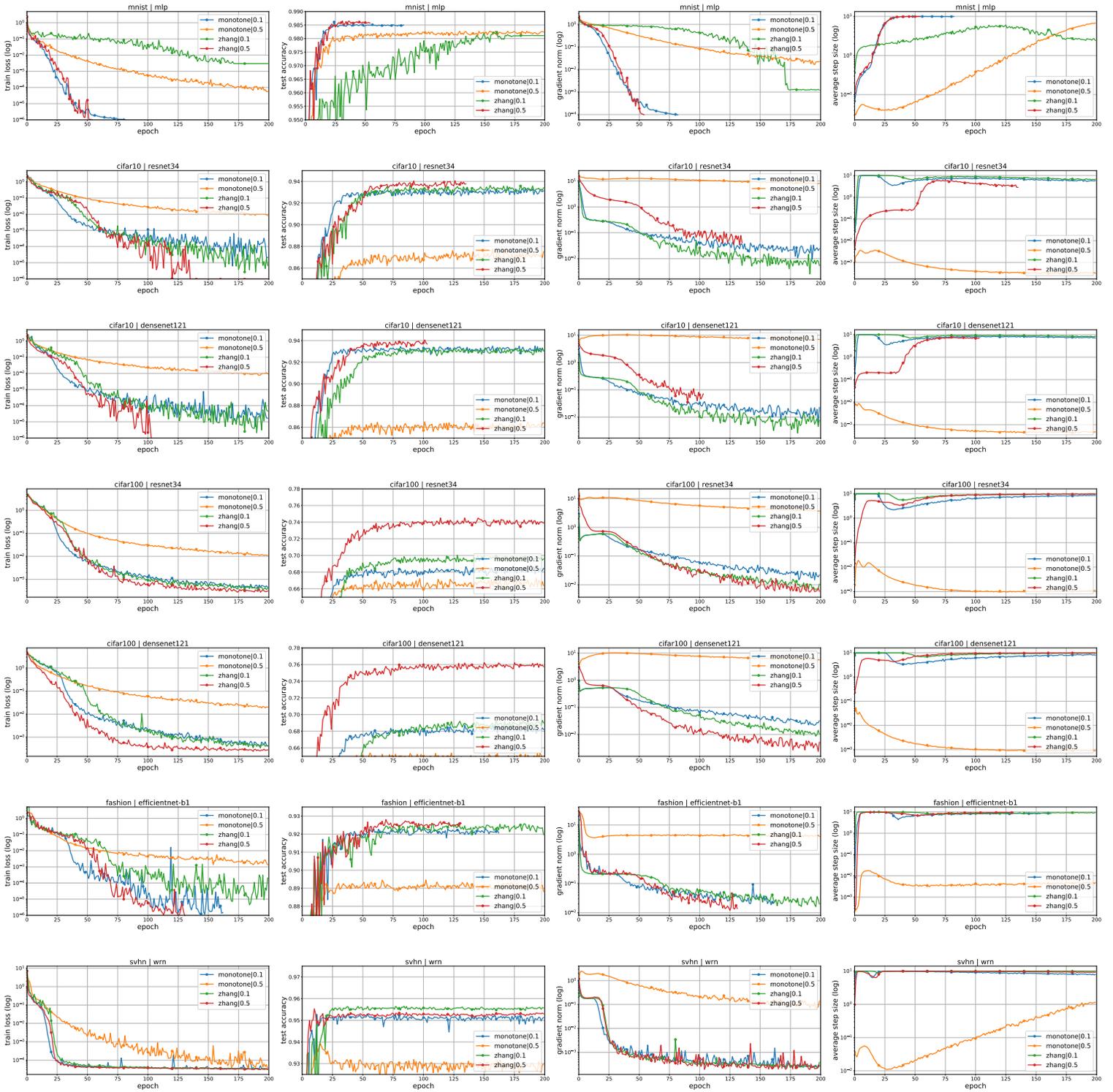
 
	\hspace{-9mm}
	\begin{minipage}{0.9\textwidth}
		\renewcommand{\model}{mlp}
		\renewcommand{\modelname}{mlp}
		\subfloat{\includegraphics[width=\imgS\linewidth]{\dir/\model/train_loss}}
		\subfloat{\includegraphics[width=\imgS\linewidth]{\dir/\model/val_acc_focus}}
		\subfloat{\includegraphics[width=\imgS\linewidth]{\dir/\model/grad_norm}}
		\subfloat{\includegraphics[width=\imgS\linewidth]{\dir/\model/agv_step_size}}\\
		\renewcommand{\model}{cifar10_resnet}
		\renewcommand{\modelname}{cifar10\_resnet}
		\subfloat{\includegraphics[width=\imgS\linewidth]{\dir/\model/train_loss}}
		\subfloat{\includegraphics[width=\imgS\linewidth]{\dir/\model/val_acc_focus}}
		\subfloat{\includegraphics[width=\imgS\linewidth]{\dir/\model/grad_norm}}
		\subfloat{\includegraphics[width=\imgS\linewidth]{\dir/\model/agv_step_size}}\\
		\renewcommand{\model}{cifar10_densenet}
		\renewcommand{\modelname}{cifar10\_densenet}
		\subfloat{\includegraphics[width=\imgS\linewidth]{\dir/\model/train_loss}}
		\subfloat{\includegraphics[width=\imgS\linewidth]{\dir/\model/val_acc_focus}}
		\subfloat{\includegraphics[width=\imgS\linewidth]{\dir/\model/grad_norm}}
		\subfloat{\includegraphics[width=\imgS\linewidth]{\dir/\model/agv_step_size}}\\
		\renewcommand{\model}{cifar100_resnet}
		\renewcommand{\modelname}{cifar100\_resnet}
		\subfloat{\includegraphics[width=\imgS\linewidth]{\dir/\model/train_loss}}
		\subfloat{\includegraphics[width=\imgS\linewidth]{\dir/\model/val_acc_focus}}
		\subfloat{\includegraphics[width=\imgS\linewidth]{\dir/\model/grad_norm}}
		\subfloat{\includegraphics[width=\imgS\linewidth]{\dir/\model/agv_step_size}}\\
		\renewcommand{\model}{cifar100_densenet}
		\renewcommand{\modelname}{cifar100\_densenet}
		\subfloat{\includegraphics[width=\imgS\linewidth]{\dir/\model/train_loss}}
		\subfloat{\includegraphics[width=\imgS\linewidth]{\dir/\model/val_acc_focus}}
		\subfloat{\includegraphics[width=\imgS\linewidth]{\dir/\model/grad_norm}}
		\subfloat{\includegraphics[width=\imgS\linewidth]{\dir/\model/agv_step_size}}\\
		\renewcommand{\model}{fashion_effb1}
		\renewcommand{\modelname}{fashion\_effb1}
		\subfloat{\includegraphics[width=\imgS\linewidth]{\dir/\model/train_loss}}
		\subfloat{\includegraphics[width=\imgS\linewidth]{\dir/\model/val_acc_focus}}
		\subfloat{\includegraphics[width=\imgS\linewidth]{\dir/\model/grad_norm}}
		\subfloat{\includegraphics[width=\imgS\linewidth]{\dir/\model/agv_step_size}}\\
		\renewcommand{\model}{svhn_wrn}
		\renewcommand{\modelname}{svhn\_wrn}
		\subfloat{\includegraphics[width=\imgS\linewidth]{\dir/\model/train_loss}}
		\subfloat{\includegraphics[width=\imgS\linewidth]{\dir/\model/val_acc_focus}}
		\subfloat{\includegraphics[width=\imgS\linewidth]{\dir/\model/grad_norm}}
		\subfloat{\includegraphics[width=\imgS\linewidth]{\dir/\model/agv_step_size}}
		\caption{Comparison between the use of the constant $c=0.1$ and $c=0.5$ on both monotone and nonmonotone algorithms. Each row focus on a dataset/model combination. First column: train loss. Second column: test accuracy. Third column: average step size of the epoch. Fourth column: average gradient norm of the epoch.}\label{fig:theoretically_fixed_c}
	\end{minipage}
\end{figure}
	\par In Figure \ref{fig:convex} we propose the same comparison as in Figure \ref{fig:theoretically_fixed_c}, but in the case of the convex experiments of Figure \ref{fig:convex_short}. From Figure \ref{fig:convex} we can observe that zhang|0.1 and monotone|0.1 do not obtain good performances. On both \rcvone$ $ and \ijcnn, these algorithms do not converge, suggesting that a large constant $c$ is sometimes required to achieve convergence. 
\renewcommand{\dir}{convex}
\renewcommand{\imgS}{0.33}
\begin{figure}[!h]
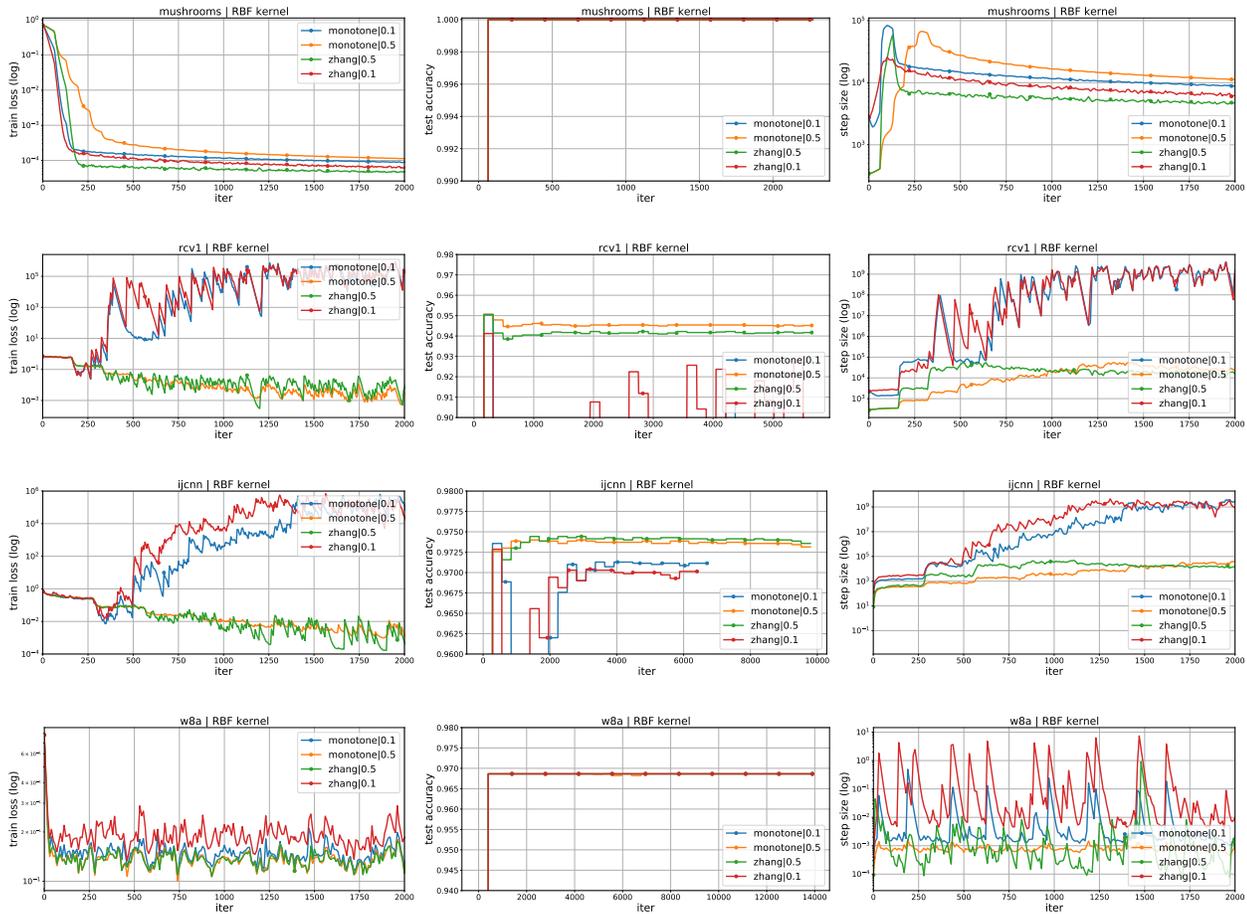
 
	\begin{minipage}{0.9\textwidth}
		\centering
		\renewcommand{\model}{mushrooms}
		\renewcommand{\modelname}{mushrooms}
		\subfloat{\includegraphics[width=\imgS\linewidth]{\dir/\model/smooth_loss}}
		\subfloat{\includegraphics[width=\imgS\linewidth]{\dir/\model/val_acc_focus}}
		\subfloat{\includegraphics[width=\imgS\linewidth]{\dir/\model/smooth_step_size}}\\
		\renewcommand{\model}{rcv1}
		\renewcommand{\modelname}{rcv1}
		\subfloat{\includegraphics[width=\imgS\linewidth]{\dir/\model/smooth_loss}}
		\subfloat{\includegraphics[width=\imgS\linewidth]{\dir/\model/val_acc_focus}}
		\subfloat{\includegraphics[width=\imgS\linewidth]{\dir/\model/smooth_step_size}}\\
		\renewcommand{\model}{ijcnn}
		\renewcommand{\modelname}{ijcnn}
		\subfloat{\includegraphics[width=\imgS\linewidth]{\dir/\model/smooth_loss}}
		\subfloat{\includegraphics[width=\imgS\linewidth]{\dir/\model/val_acc_focus}}
		\subfloat{\includegraphics[width=\imgS\linewidth]{\dir/\model/smooth_step_size}}\\
		\renewcommand{\model}{w8a}
		\renewcommand{\modelname}{w8a}
		\subfloat{\includegraphics[width=\imgS\linewidth]{\dir/\model/smooth_loss}}
		\subfloat{\includegraphics[width=\imgS\linewidth]{\dir/\model/val_acc_focus}}
		\subfloat{\includegraphics[width=\imgS\linewidth]{\dir/\model/smooth_step_size}}
		\caption{Comparison between the use of the constant $c=0.1$ and $c=0.5$ on both monotone and nonmonotone algorithms on convex binary classification problems. Each row focus on a different dataset. First column: exponentially averaged train loss. Second column: test accuracy. Third column: exponentially averaged step size.}\label{fig:convex}
	\end{minipage}
\end{figure}
	\subsection{Study on the Line Search Choice: Various Nonmonotone Adaptations}\label{sec:supp_line_search}
	In this subsection, we propose a comparison between various line search conditions. For the first time in this paper, we adapt different nonmonotone techniques to SGD. A straightforward adaptation of the nonmonotone line search from \citet{grippo86a} is
\begin{equation}\label{eq:grippo_naive}
\fik(w_k - \eta_k \gradik(w_k)) \leq \max_{0\leq j \leq W-1} \fik(w_{k-j}) -c \eta_k \| \gradik(w_k) \|^2.
\end{equation}
The nonmonotone term in \eqref{eq:grippo_naive} can be computed in two ways. Either by keeping in memory $W$ (usually $10$ or $20$) previous vector weights $\{w_{k-W}, \dots, w_k\}$ and computing the current mini-batch function $\fik$ on all of them. Or by computing all the following $W$ mini-batch functions $\{ \fik(\cdot), \dots, f_{i_{k+W}} (\cdot)\}$ on $w_k$. Both options are very expensive in the case of large neural networks and they should be avoided. The condition \eqref{eq:grippo_naive} is the same proposed in \citet{hafshejani23a}.
\par To reduce the cost of computing \eqref{eq:grippo_naive}, we here propose two other stochastic adaptations of the nonmonotone line search from \citet{grippo86a}. We name  \textit{cross-batch Grippo} the following line search condition
\begin{equation}\label{eq:nonmonotone}
	f_{i_k}(w_k - \eta_k \nabla f_{i_k}(w_k)) \leq f_{i_{k-\tilde{r}}} (w_{\tilde{r}}) -c \eta_k \| \nabla f_{i_k}(w_k) \|^2,
\end{equation}
where $\tilde{r}$ is a short notation for $\tilde{r}(k, i_k)$, which is the (say, largest) iteration index such that\\ $f_{i_{k-\tilde{r}(k, i_k)}}(w_{\tilde{r}(k, i_k)}) = \displaystyle\max_{0\leq j \leq W-1} f_{i_{k-j}}(w_{k-j})$. The computation of \eqref{eq:nonmonotone} does not introduce overhead since it directly uses the function values computed in the previous iterations. However, the fact that \eqref{eq:nonmonotone} computes the maximum over different mini-batch functions $\{f_{i_{k-W}} (\cdot), \dots, \fik(\cdot)\}$ complicates the convergence analysis. We conjecture that under the interpolation assumption alone, it is not possible to achieve convergence if \eqref{eq:nonmonotone} replaces \eqref{eq:zhang}, not even in the strongly convex case.\\
\noindent We call \textit{single-batch Grippo} the following line search condition
\begin{equation}\label{eq:true_nonmonotone}
	\fik(w_k - \eta_k \gradik(w_k)) \leq \fik(w_{r(k, i_k)}) -c \eta_k \| \gradik(w_k) \|^2,
\end{equation}
where $r(k, i_k)$ is the (say, largest) iteration index such that $\fik (w_{r(k, i_k)}) = \displaystyle\max_{0\leq j \leq W-1} \fik(w_{k-j\frac{M}{b}})$. This line search is similar to \eqref{eq:grippo_naive} since it focuses on $\fik$. However, it also reduces the cost of computing the nonmonotone term. In fact, given a certain set of indexes $i_k$, it exploits the function values computed in the previous epochs, without re-computing $\fik$. This requires saving a matrix of $M\!\times\!W$ floating-point numbers. On the other hand, to use this computational trick, the nonmonotone term can only be computed starting from the second epoch. Moreover, \eqref{eq:true_nonmonotone} is not computationally as cheap as \eqref{eq:nonmonotone} or \eqref{eq:zhang}. In particular, at every mini-batch iteration \eqref{eq:nonmonotone} requires the extraction of the $b\times W$ values corresponding to the indexes in $i_k$ from the above-mentioned matrix and to compute the maximum over these values.\\
\noindent In Figure \ref{fig:line_search} we compare 
\begin{itemize}
	\item monotone: the monotone stochastic Armijo line search \eqref{eq:armijo}.
	\item cross\_batch\_grippo: the cross-batch nonmonotone Grippo's line search  \eqref{eq:nonmonotone} with $W=\frac{M}{b}$.  
	\item single\_batch\_grippo: the single-batch nonmonotone Grippo's line search \eqref{eq:true_nonmonotone} with $W=10$.
	\item zhang: the nonmonotone \citet{zhang04a} line search adapted to the stochastic case \eqref{eq:zhang}. This setting corresponds to PoNoS.
\end{itemize}
For all the above algorithms the initial step size is \eqref{eq:loizou}. From Figure \ref{fig:line_search} we can observe that:
\begin{itemize}
	\item zhang (PoNoS) achieves the best performances both in terms of train loss and test accuracy. Also in this case, it is the only algorithm able to reduce the train loss below the threshold of $10^{-6}$ on the problems \res, \dense $ $ and \fashion. Moreover, it always achieves the highest test accuracy. In \fashion$ $it is not as fast as cross\_batch\_grippo and single\_batch\_grippo.
	\item single\_batch\_grippo achieves similar performances as zhang. However, it does not always reach the same test accuracy (e.g., it loses $\sim$1 point on \ress). On \fashion it is the fastest algorithm in terms of train loss. On the other hand, it almost never reduces the amount of backtracks below 100 per epoch.
	\item The step sizes yielded by both Grippo's adaptations are often growing faster than those of zhang, especially in the initial phase of the optimization procedure. On the other extreme, the monotone line search is too strict and yields step sizes that are very small in comparison to those of zhang. In fact, monotone is achieving very poor results both in terms of train loss and test accuracy.
	\item cross\_batch\_grippo is never as fast as single\_batch\_grippo in terms of train loss and it loses $\sim$5 points of accuracy on \ress$ $ and \denses$ $ w.r.t. zhang. In accordance with the conjecture above, cross\_batch\_grippo does not converge on \mlp.
\end{itemize}	
\par In conclusion, zhang (PoNoS) is the best-performing line search technique among those compared in Figure \ref{fig:line_search}. In terms of step size, zhang provides the right middle way between the very permissive Grippo's conditions and the very strict monotone one. Moreover, even if zhang and single\_batch\_grippo behave similarly, the second is computationally more expensive and it is never able to reduce the backtracks to (almost) always zero. 
\renewcommand{\dir}{line_search/}
\renewcommand{\imgS}{0.30}
\begin{figure}[!h]
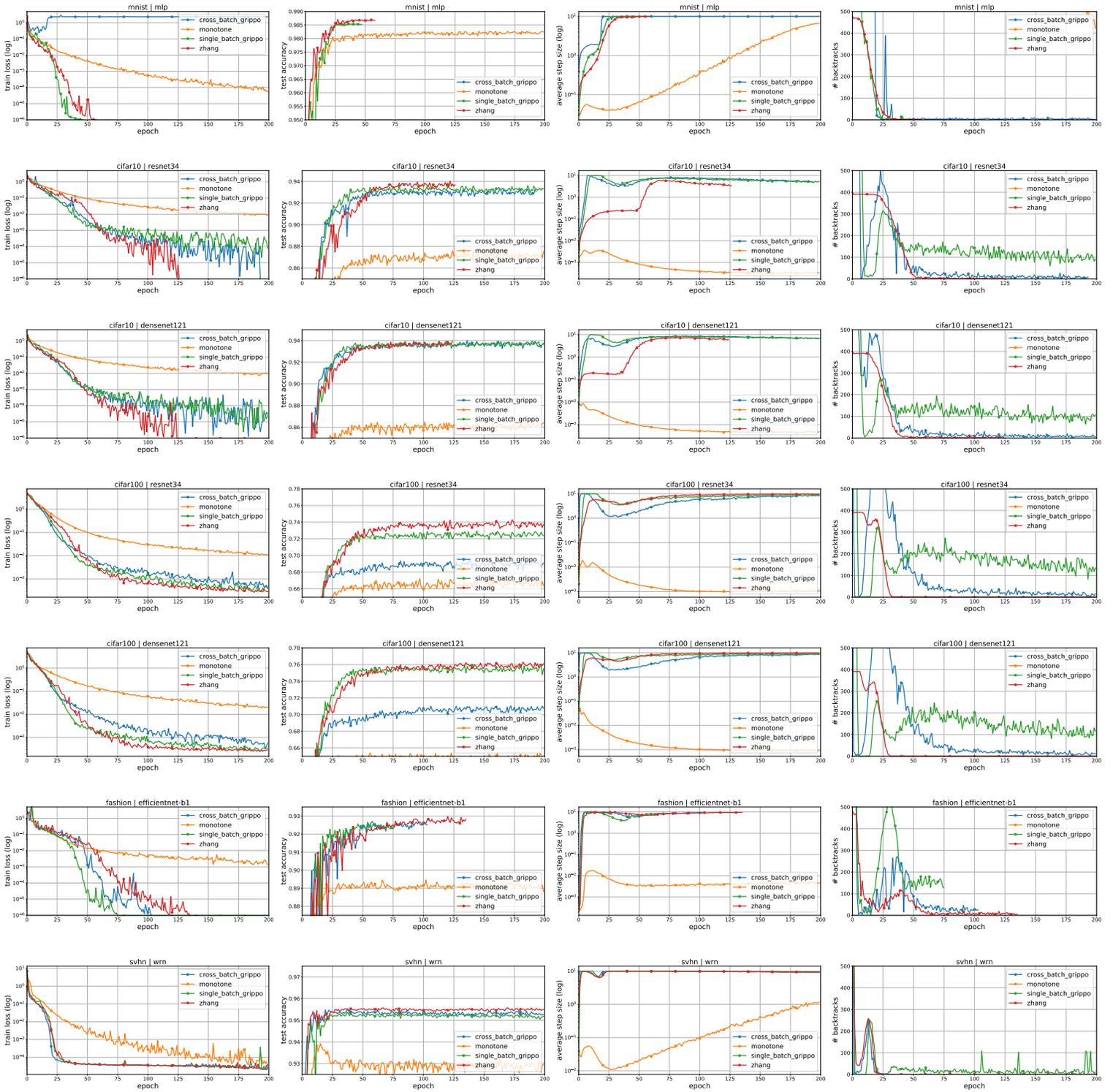
 
\hspace{-9mm}
\begin{minipage}{0.9\textwidth}
\renewcommand{\model}{mlp}
\renewcommand{\modelname}{mlp}
\subfloat{\includegraphics[width=\imgS\linewidth]{\dir/\model/train_loss}}
\subfloat{\includegraphics[width=\imgS\linewidth]{\dir/\model/val_acc_focus}}
\subfloat{\includegraphics[width=\imgS\linewidth]{\dir/\model/agv_step_size}}
\subfloat{\includegraphics[width=\imgS\linewidth]{\dir/\model/backtracks_focus}}\\
\renewcommand{\model}{cifar10_resnet}
\renewcommand{\modelname}{cifar10\_resnet}
\subfloat{\includegraphics[width=\imgS\linewidth]{\dir/\model/train_loss}}
\subfloat{\includegraphics[width=\imgS\linewidth]{\dir/\model/val_acc_focus}}
\subfloat{\includegraphics[width=\imgS\linewidth]{\dir/\model/agv_step_size}}
\subfloat{\includegraphics[width=\imgS\linewidth]{\dir/\model/backtracks_focus}}\\
\renewcommand{\model}{cifar10_densenet}
\renewcommand{\modelname}{cifar10\_densenet}
\subfloat{\includegraphics[width=\imgS\linewidth]{\dir/\model/train_loss}}
\subfloat{\includegraphics[width=\imgS\linewidth]{\dir/\model/val_acc_focus}}
\subfloat{\includegraphics[width=\imgS\linewidth]{\dir/\model/agv_step_size}}
\subfloat{\includegraphics[width=\imgS\linewidth]{\dir/\model/backtracks_focus}}\\
\renewcommand{\model}{cifar100_resnet}
\renewcommand{\modelname}{cifar100\_resnet}
\subfloat{\includegraphics[width=\imgS\linewidth]{\dir/\model/train_loss}}
\subfloat{\includegraphics[width=\imgS\linewidth]{\dir/\model/val_acc_focus}}
\subfloat{\includegraphics[width=\imgS\linewidth]{\dir/\model/agv_step_size}}
\subfloat{\includegraphics[width=\imgS\linewidth]{\dir/\model/backtracks_focus}}\\
\renewcommand{\model}{cifar100_densenet}
\renewcommand{\modelname}{cifar100\_densenet}
\subfloat{\includegraphics[width=\imgS\linewidth]{\dir/\model/train_loss}}
\subfloat{\includegraphics[width=\imgS\linewidth]{\dir/\model/val_acc_focus}}
\subfloat{\includegraphics[width=\imgS\linewidth]{\dir/\model/agv_step_size}}
\subfloat{\includegraphics[width=\imgS\linewidth]{\dir/\model/backtracks_focus}}\\
\renewcommand{\model}{fashion_effb1}
\renewcommand{\modelname}{fashion\_effb1}
\subfloat{\includegraphics[width=\imgS\linewidth]{\dir/\model/train_loss}}
\subfloat{\includegraphics[width=\imgS\linewidth]{\dir/\model/val_acc_focus}}
\subfloat{\includegraphics[width=\imgS\linewidth]{\dir/\model/agv_step_size}}
\subfloat{\includegraphics[width=\imgS\linewidth]{\dir/\model/backtracks_focus}}\\
\renewcommand{\model}{svhn_wrn}
\renewcommand{\modelname}{svhn\_wrn}
\subfloat{\includegraphics[width=\imgS\linewidth]{\dir/\model/train_loss}}
\subfloat{\includegraphics[width=\imgS\linewidth]{\dir/\model/val_acc_focus}}
\subfloat{\includegraphics[width=\imgS\linewidth]{\dir/\model/agv_step_size}}
\subfloat{\includegraphics[width=\imgS\linewidth]{\dir/\model/backtracks_focus}}
\caption{Comparison between different monotone and nonmonotone line search conditions. Each row focus on a dataset/model combination. First column: train loss. Second column: test accuracy. Third column: average step size of the epoch. Fourth column: cumulative number of backtracks in the epoch.}\label{fig:line_search}
	\end{minipage}
\end{figure}

	\subsection{Zoom in on the Amount of Backtracks}\label{sec:supp_f_eval}
	In this subsection, we zoom in on the amount of backtracks employed by PoNoS and PoNoS\_reset0. Instead of showing the cumulative number of backtracks in each epoch (as in the fourth column of Figure \ref{fig:reset}), Figure \ref{fig:f_eval} reports the amount of backtracks in each iteration. To help visualizing the whole optimization procedure, we average the number of backtracks over 10 consecutive iterations. Respectively on the leftmost and rightmost column of Figure \ref{fig:f_eval} we report the first 20000 iterations and 100 iterations (out of 78200-114600). From these columns we can observe:
\begin{itemize}
	\item PoNoS\_reset0 employs a stable amount of backtracks across iterations.
	\item PoNoS reduces the number of backtracks to 1 on average already after the first iteration. Afterwards, this value is first stable for the first 1000-10000 iterations (5-25 epochs) and it then reaches a median of 0 after a transition phase.
\end{itemize}
In the second column of Figure \ref{fig:f_eval}, we report the average difference between two consecutive amount of backtracks in the first 20000 iterations. In particular, this value is higher in the initial phase, while (almost) always 0 in a later stage. Focusing on PoNoS\_reset0 this difference is almost always below 1. Regarding the newly proposed resetting technique \eqref{eq:new_etak}, we can conclude that the value $l_{k-1}$ is a good estimate for $l_{k}$.
\renewcommand{\dir}{f_eval/}
\renewcommand{\imgS}{0.37}
\begin{figure}[!h]
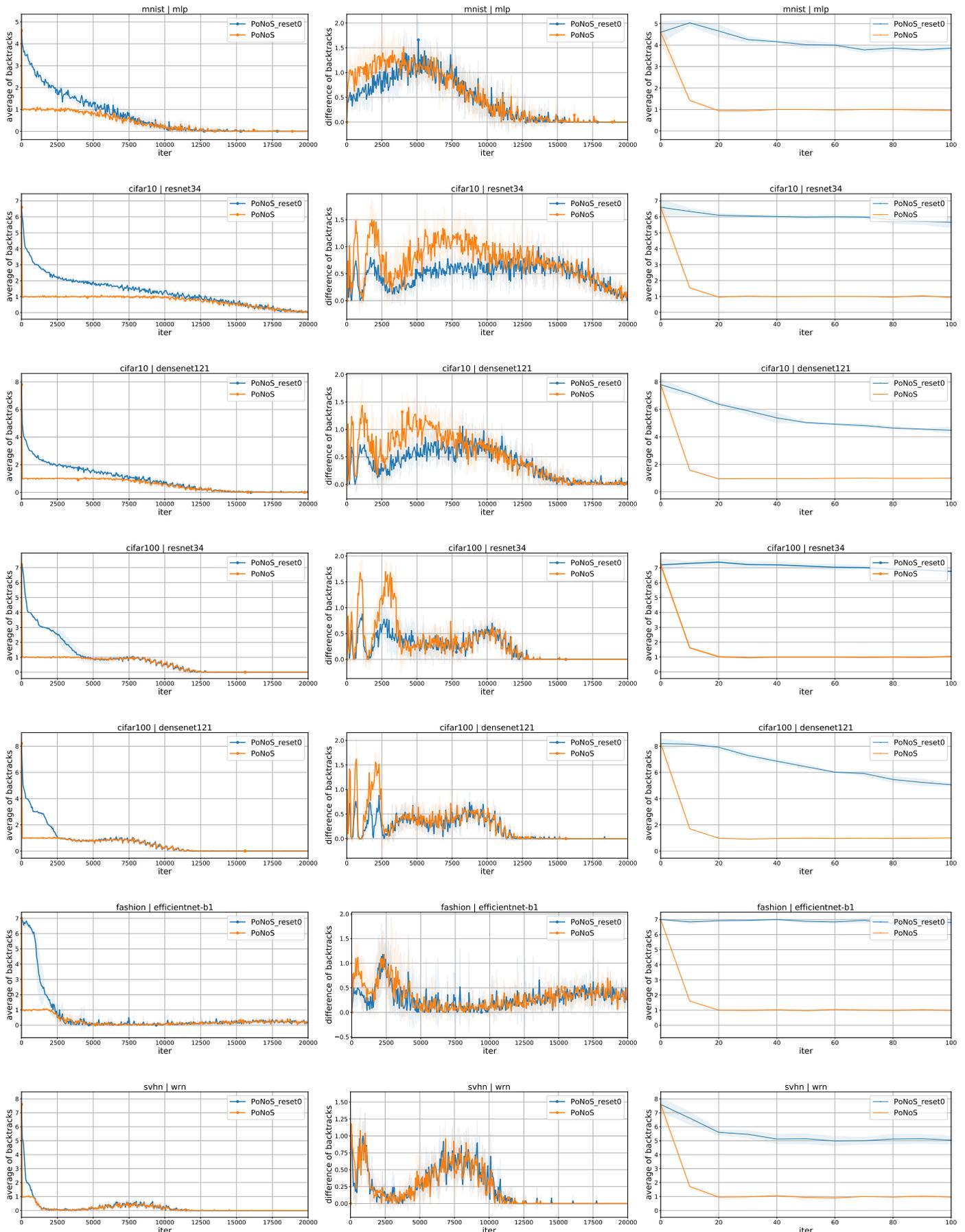
 
	\hspace{-3mm}
	\begin{minipage}{0.9\textwidth}
		\renewcommand{\model}{mlp}
		\renewcommand{\modelname}{mlp}
		\subfloat{\includegraphics[width=\imgS\linewidth]{\dir/\model/avg_backtracks}}
		\subfloat{\includegraphics[width=\imgS\linewidth]{\dir/\model/diff_backtracks}}
		\subfloat{\includegraphics[width=\imgS\linewidth]{\dir/\model/avg_backtracks_focus}}\\
		\renewcommand{\model}{cifar10_resnet}
		\renewcommand{\modelname}{cifar10\_resnet}
		\subfloat{\includegraphics[width=\imgS\linewidth]{\dir/\model/avg_backtracks}}
		\subfloat{\includegraphics[width=\imgS\linewidth]{\dir/\model/diff_backtracks}}
		\subfloat{\includegraphics[width=\imgS\linewidth]{\dir/\model/avg_backtracks_focus}}\\
		\renewcommand{\model}{cifar10_densenet}
		\renewcommand{\modelname}{cifar10\_densenet}
		\subfloat{\includegraphics[width=\imgS\linewidth]{\dir/\model/avg_backtracks}}
		\subfloat{\includegraphics[width=\imgS\linewidth]{\dir/\model/diff_backtracks}}
		\subfloat{\includegraphics[width=\imgS\linewidth]{\dir/\model/avg_backtracks_focus}}\\
		\renewcommand{\model}{cifar100_resnet}
		\renewcommand{\modelname}{cifar100\_resnet}
		\subfloat{\includegraphics[width=\imgS\linewidth]{\dir/\model/avg_backtracks}}
		\subfloat{\includegraphics[width=\imgS\linewidth]{\dir/\model/diff_backtracks}}
		\subfloat{\includegraphics[width=\imgS\linewidth]{\dir/\model/avg_backtracks_focus}}\\
		\renewcommand{\model}{cifar100_densenet}
		\renewcommand{\modelname}{cifar100\_densenet}
		\subfloat{\includegraphics[width=\imgS\linewidth]{\dir/\model/avg_backtracks}}
		\subfloat{\includegraphics[width=\imgS\linewidth]{\dir/\model/diff_backtracks}}
		\subfloat{\includegraphics[width=\imgS\linewidth]{\dir/\model/avg_backtracks_focus}}\\
		\renewcommand{\model}{fashion_effb1}
		\renewcommand{\modelname}{fashion\_effb1}
		\subfloat{\includegraphics[width=\imgS\linewidth]{\dir/\model/avg_backtracks}}
		\subfloat{\includegraphics[width=\imgS\linewidth]{\dir/\model/diff_backtracks}}
		\subfloat{\includegraphics[width=\imgS\linewidth]{\dir/\model/avg_backtracks_focus}}\\
		\renewcommand{\model}{svhn_wrn}
		\renewcommand{\modelname}{svhn\_wrn}
		\subfloat{\includegraphics[width=\imgS\linewidth]{\dir/\model/avg_backtracks}}
		\subfloat{\includegraphics[width=\imgS\linewidth]{\dir/\model/diff_backtracks}}
		\subfloat{\includegraphics[width=\imgS\linewidth]{\dir/\model/avg_backtracks_focus}}
		\caption{Comparison on the number of backtracks between the proposed method with (PoNoS) or without the new resetting technique (PoNoS\_reset0). Each row focus on a dataset/model combination. First column: average number of backtracks in the first 200000 iterations. Second column: average difference between two consecutive amount of backtracks in the first 200000 iterations. Third column: average number of backtracks in the first 1000 iterations.}\label{fig:f_eval}
	\end{minipage}
\end{figure}
	\subsection{Study on the Choice of $\etamax$}\label{sec:supp_etamax}
	In this subsection, we report an ablation study on $\etamax$ for PoNoS, SLS and SPS. In Figure \ref{fig:etamax}, we compare
\begin{itemize}
	\item SPS|10: SPS with $\etamax=10$. This setting corresponds to SPS.
	\item SPS|100:SPS with $\etamax=100$. 
	\item SLS|10: SLS with $\etamax=10$. This setting corresponds to SLS.
	\item SLS|100: SLS with $\etamax=100$. 
	\item PoNoS|10: PoNoS\_reset0 with $\etamax=10$. This setting corresponds to PoNoS\_reset0.
	\item PoNoS|100: PoNoS\_reset0 with $\etamax=100$. 
\end{itemize}
We report PoNoS\_reset0 (without \eqref{eq:new_etak}) instead of PoNoS because we want to observe the difference in the amount of backtracks.
To directly observe the effect of changing $\etamax$, we report the average initial step size within each epoch in the third column of Figure \ref{fig:etamax}. From Figure \ref{fig:etamax} we can observe:
\begin{itemize}
	\item The value of $\etamax=100$ is never reached by the step sizes of the three algorithms. The step size of PoNoS|100, SLS|100 and SPS|100 can be considered unbounded.
	\item The three algorithms are robust to the choice of $\etamax$. In fact, the train loss and test accuracy of PoNoS|100, SLS|100 and SPS|100 are similar to those of PoNoS|10, SLS|10 and SPS|10.
	\item In terms of train loss, there are some small differences on \ress, \denses$ $ and \fashion. On these problems, PoNoS|100, SLS|100 and SPS|100 are sometimes slower during the intermediate phase of the optimization. However, the three unbounded algorithms are able to catch up towards the end.
	\item In terms of test accuracy, both SLS|100 and SPS|100 lose more than $2$ points on \denses$ $ w.r.t. SLS|10 and SPS|10, while PoNoS|100 and SLS|100 lose more than $2$ points on \denses$ $.
	\item PoNoS|100 performs overall better than SLS|100 and SPS|100.
	\item The amount backtracks of PoNoS|100 is sometimes remarkably larger than for PoNoS|10. However, the use of \eqref{eq:new_etak} would reduce this number to (almost) always zero.
\end{itemize}

\renewcommand{\dir}{etamax/}
\renewcommand{\imgS}{0.30}
\begin{figure}[!h]
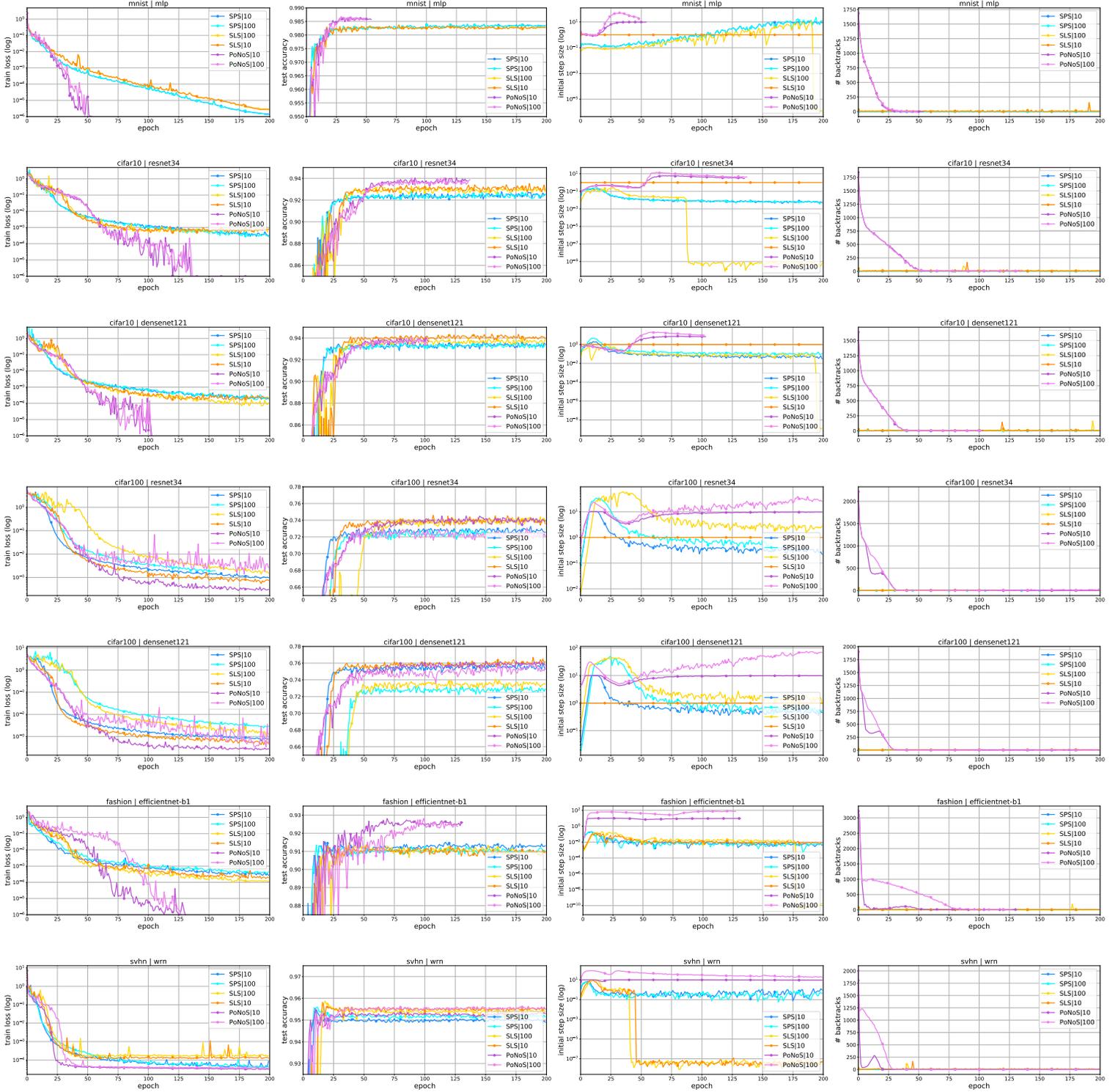
 
	\hspace{-9mm}
	\begin{minipage}{0.9\textwidth}
		\renewcommand{\model}{mlp}
		\renewcommand{\modelname}{mlp}
		\subfloat{\includegraphics[width=\imgS\linewidth]{\dir/\model/train_loss}}
		\subfloat{\includegraphics[width=\imgS\linewidth]{\dir/\model/val_acc_focus}}
		\subfloat{\includegraphics[width=\imgS\linewidth]{\dir/\model/orig_step}}
		\subfloat{\includegraphics[width=\imgS\linewidth]{\dir/\model/backtracks}}\\
		\renewcommand{\model}{cifar10_resnet}
		\renewcommand{\modelname}{cifar10\_resnet}
		\subfloat{\includegraphics[width=\imgS\linewidth]{\dir/\model/train_loss}}
		\subfloat{\includegraphics[width=\imgS\linewidth]{\dir/\model/val_acc_focus}}
		\subfloat{\includegraphics[width=\imgS\linewidth]{\dir/\model/orig_step}}
		\subfloat{\includegraphics[width=\imgS\linewidth]{\dir/\model/backtracks}}\\
		\renewcommand{\model}{cifar10_densenet}
		\renewcommand{\modelname}{cifar10\_densenet}
		\subfloat{\includegraphics[width=\imgS\linewidth]{\dir/\model/train_loss}}
		\subfloat{\includegraphics[width=\imgS\linewidth]{\dir/\model/val_acc_focus}}
		\subfloat{\includegraphics[width=\imgS\linewidth]{\dir/\model/orig_step}}
		\subfloat{\includegraphics[width=\imgS\linewidth]{\dir/\model/backtracks}}\\
		\renewcommand{\model}{cifar100_resnet}
		\renewcommand{\modelname}{cifar100\_resnet}
		\subfloat{\includegraphics[width=\imgS\linewidth]{\dir/\model/train_loss}}
		\subfloat{\includegraphics[width=\imgS\linewidth]{\dir/\model/val_acc_focus}}
		\subfloat{\includegraphics[width=\imgS\linewidth]{\dir/\model/orig_step}}
		\subfloat{\includegraphics[width=\imgS\linewidth]{\dir/\model/backtracks}}\\
		\renewcommand{\model}{cifar100_densenet}
		\renewcommand{\modelname}{cifar100\_densenet}
		\subfloat{\includegraphics[width=\imgS\linewidth]{\dir/\model/train_loss}}
		\subfloat{\includegraphics[width=\imgS\linewidth]{\dir/\model/val_acc_focus}}
		\subfloat{\includegraphics[width=\imgS\linewidth]{\dir/\model/orig_step}}
		\subfloat{\includegraphics[width=\imgS\linewidth]{\dir/\model/backtracks}}\\
		\renewcommand{\model}{fashion_effb1}
		\renewcommand{\modelname}{fashion\_effb1}
		\subfloat{\includegraphics[width=\imgS\linewidth]{\dir/\model/train_loss}}
		\subfloat{\includegraphics[width=\imgS\linewidth]{\dir/\model/val_acc_focus}}
		\subfloat{\includegraphics[width=\imgS\linewidth]{\dir/\model/orig_step}}
		\subfloat{\includegraphics[width=\imgS\linewidth]{\dir/\model/backtracks}}\\
		\renewcommand{\model}{svhn_wrn}
		\renewcommand{\modelname}{svhn\_wrn}
		\subfloat{\includegraphics[width=\imgS\linewidth]{\dir/\model/train_loss}}
		\subfloat{\includegraphics[width=\imgS\linewidth]{\dir/\model/val_acc_focus}}
		\subfloat{\includegraphics[width=\imgS\linewidth]{\dir/\model/orig_step}}
		\subfloat{\includegraphics[width=\imgS\linewidth]{\dir/\model/backtracks}}
		\caption{Comparison between the use of the constant $\etamax=10$ and $\etamax=100$ on SPS, SLS and PoNoS. Each row focus on a dataset/model combination. First column: train loss. Second column: test accuracy. Third column: average initial step size of the epoch. Fourth column: cumulative number of backtracks in the epoch.}\label{fig:etamax}
	\end{minipage}
\end{figure}
	\subsection{Study on the Choice of $c_p$: Doubling the Legacy Value}\label{sec:supp_c_p}
	In this subsection, we report an ablation study on $c_p$ in \eqref{eq:polyak} for PoNoS and SPS. PoNoS utilize the Polyak step size as an initial guess for the line search method and not as a direct learning rate for SGD as in SPS \citep{loizou21a}. We suggest doubling the value employed in \citet{loizou21a} by employing $c_p=0.1$ in \eqref{eq:polyak} instead of the default value of $c_p=0.2$. Given the fact that $\delta=0.5$, the line search will half the step size when needed, achieving only in this case the same effect of reducing $c_p$ back to $c_p=0.2$. In Figure \ref{fig:c_p} we compare
\begin{itemize}
	\item SPS|0.1: SPS with $c_p=0.1$. 
	\item SPS|0.2: SPS with $c_p=0.2$. This setting corresponds to SPS.
	\item PoNoS|0.1: SLS with $c_p=0.1$. This setting corresponds to PoNoS.
	\item PoNoS|0.2: SLS with $c_p=0.2$. 
\end{itemize}
From Figure \ref{fig:c_p} we can observe that the differences between $c_p=0.1$ and $c_p=0.2$ are not remarkable. However, PoNoS|0.1 always achieves slightly better performance than PoNoS|0.2 both in terms of train loss and test accuracy. On the other hand, the test accuracy of SPS|0.1 is not always as high as that achieved by SPS|0.2. To conclude, we can observe from the third column of Figure \ref{fig:c_p} that the step size of PoNoS|0.1 is not always larger than that of PoNoS|0.2. In fact, this does not happen in the initial phase of the optimization, but in the intermediate phase and only on some problems. Given the improved performance of PoNoS|0.1 over PoNoS|0.2, one could argue that the line search finds the regions in which a larger step is beneficial. 
\renewcommand{\dir}{c_p/}
\renewcommand{\imgS}{0.37}
\begin{figure}[!h]
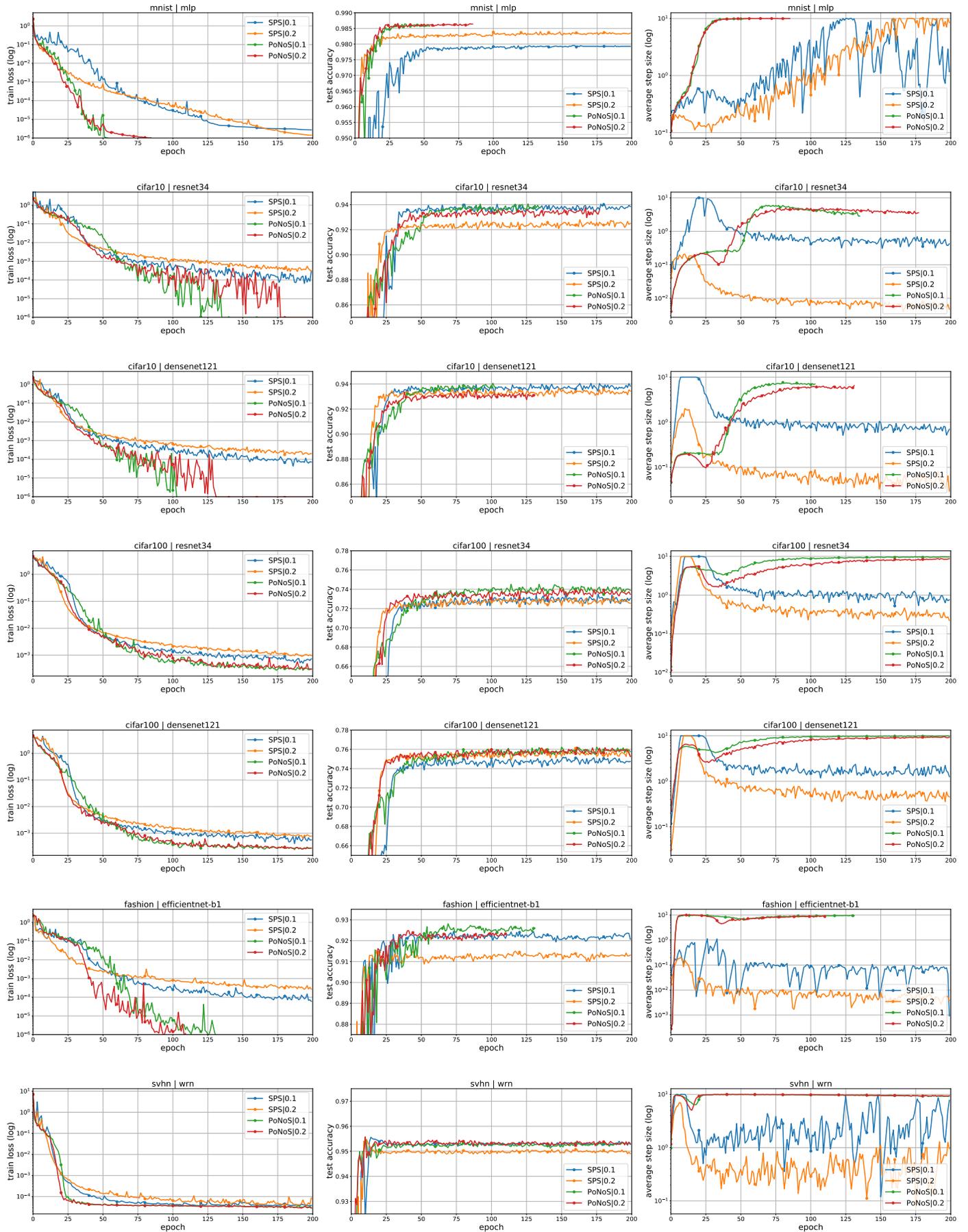
 
	\hspace{-3mm}
	\begin{minipage}{0.9\textwidth}
		\renewcommand{\model}{mlp}
		\renewcommand{\modelname}{mlp}
		\subfloat{\includegraphics[width=\imgS\linewidth]{\dir/\model/train_loss}}
		\subfloat{\includegraphics[width=\imgS\linewidth]{\dir/\model/val_acc_focus}}
		\subfloat{\includegraphics[width=\imgS\linewidth]{\dir/\model/agv_step_size}}\\
		\renewcommand{\model}{cifar10_resnet}
		\renewcommand{\modelname}{cifar10\_resnet}
		\subfloat{\includegraphics[width=\imgS\linewidth]{\dir/\model/train_loss}}
		\subfloat{\includegraphics[width=\imgS\linewidth]{\dir/\model/val_acc_focus}}
		\subfloat{\includegraphics[width=\imgS\linewidth]{\dir/\model/agv_step_size}}\\
		\renewcommand{\model}{cifar10_densenet}
		\renewcommand{\modelname}{cifar10\_densenet}
		\subfloat{\includegraphics[width=\imgS\linewidth]{\dir/\model/train_loss}}
		\subfloat{\includegraphics[width=\imgS\linewidth]{\dir/\model/val_acc_focus}}
		\subfloat{\includegraphics[width=\imgS\linewidth]{\dir/\model/agv_step_size}}\\
		\renewcommand{\model}{cifar100_resnet}
		\renewcommand{\modelname}{cifar100\_resnet}
		\subfloat{\includegraphics[width=\imgS\linewidth]{\dir/\model/train_loss}}
		\subfloat{\includegraphics[width=\imgS\linewidth]{\dir/\model/val_acc_focus}}
		\subfloat{\includegraphics[width=\imgS\linewidth]{\dir/\model/agv_step_size}}\\
		\renewcommand{\model}{cifar100_densenet}
		\renewcommand{\modelname}{cifar100\_densenet}
		\subfloat{\includegraphics[width=\imgS\linewidth]{\dir/\model/train_loss}}
		\subfloat{\includegraphics[width=\imgS\linewidth]{\dir/\model/val_acc_focus}}
		\subfloat{\includegraphics[width=\imgS\linewidth]{\dir/\model/agv_step_size}}\\
		\renewcommand{\model}{fashion_effb1}
		\renewcommand{\modelname}{fashion\_effb1}
		\subfloat{\includegraphics[width=\imgS\linewidth]{\dir/\model/train_loss}}
		\subfloat{\includegraphics[width=\imgS\linewidth]{\dir/\model/val_acc_focus}}
		\subfloat{\includegraphics[width=\imgS\linewidth]{\dir/\model/agv_step_size}}\\
		\renewcommand{\model}{svhn_wrn}
		\renewcommand{\modelname}{svhn\_wrn}
		\subfloat{\includegraphics[width=\imgS\linewidth]{\dir/\model/train_loss}}
		\subfloat{\includegraphics[width=\imgS\linewidth]{\dir/\model/val_acc_focus}}
		\subfloat{\includegraphics[width=\imgS\linewidth]{\dir/\model/agv_step_size}}
		\caption{Comparison between the use of the constant $c_p=0.1$ and $c_p=0.2$ on PoNoS and SPS. Each row focus on a dataset/model combination. First column: train loss. Second column: test accuracy. Third column: average step size of the epoch.}\label{fig:c_p}
	\end{minipage}
\end{figure}
	\subsection{Profiling PoNoS}\label{sec:profiling}
	In this subsection, we profile a single mini-batch iteration of PoNoS. More precisely, in Figure \ref{fig:profiling} we report the time (s) employed by the different operations required to perform an iteration of PoNoS. Averaging along the epoch (over $\frac{M}{b}$ values), in Figure \ref{fig:profiling} we compare 
\begin{itemize}
\item 1st\_fwd: average time employed to perform the first forward pass;
\item 2nd\_fwd: average time employed to perform the second forward pass;
\item extra\_fwd: average time employed to perform all the extra forward passes beyond the second. Notice that the total amount of extra forward passes could be more or less than $\frac{M}{b}$. Despite the actual amount of extra forward passes, we still divide the sum by $\frac{M}{b}$;
\item backward: average time employed to perform the backward pass;
\item batch\_load: average time employed to load a mini-batch of instances into the GPU memory;
\end{itemize}
From Figure \ref{fig:profiling}, we can observe the following.
\begin{itemize}
	\item 1st\_fwd and 2nd\_fwd employ roughly the same time. 2nd\_fwd is always slightly faster than 1st\_fwd, however, the difference is not substantial. 
	\item A backward pass employs roughly twice the time of a forward pass.
	\item The time employed to load the mini-batch in the GPU memory is negligible on some networks (\dense, \denses and \fashion). On the other networks, this time is between one-half and 4 times that of a forward pass. 
	\item As previously shown in Figure \ref{fig:f_eval}, PoNoS transitions from a phase in which it employs 1 backtrack to 0 (on median). In Figure \ref{fig:profiling}, this same behavior can be also observed in the time employed by all the forward passes beyond the second.
\end{itemize}
To conclude, the time employed by a single forward pass may reach up to one-third of the computation required for SGD (i.e., batch\_load + 1st\_fwd + backward). Following these calculations and referring to the two phases of Figure \ref{fig:time} and Figure \ref{fig:profiling}, one iteration of PoNoS only costs $\frac{5}{3}$ that of SGD in the first phase and $\frac{4}{3}$ in the second.

\renewcommand{\dir}{fwd_times/}
\renewcommand{\imgS}{0.42}
\begin{figure}[!h]
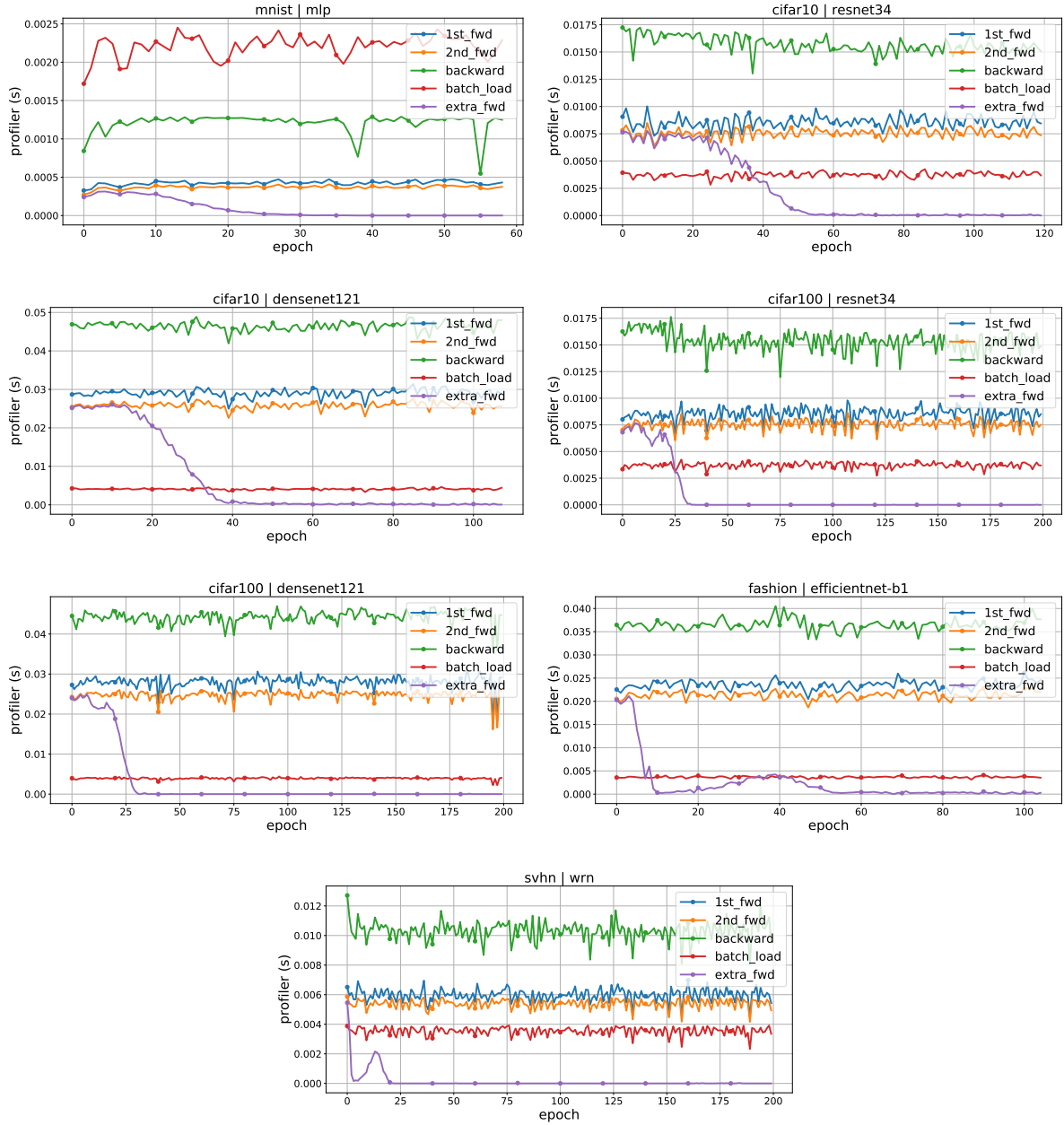
 
	\begin{minipage}{1\textwidth}
		\centering
		\renewcommand{\model}{mlp}
		\renewcommand{\modelname}{mlp}
		\subfloat{\includegraphics[width=\imgS\linewidth]{\dir/\model/profiler_time}}
		\renewcommand{\model}{cifar10_resnet}
		\renewcommand{\modelname}{cifar10\_resnet}
		\subfloat{\includegraphics[width=\imgS\linewidth]{\dir/\model/profiler_time}}
		\renewcommand{\model}{cifar10_densenet}
		\renewcommand{\modelname}{cifar10\_densenet}
		\subfloat{\includegraphics[width=\imgS\linewidth]{\dir/\model/profiler_time}}
		\renewcommand{\model}{cifar100_resnet}
		\renewcommand{\modelname}{cifar100\_resnet}
		\subfloat{\includegraphics[width=\imgS\linewidth]{\dir/\model/profiler_time}}\\
		\renewcommand{\model}{cifar100_densenet}
		\renewcommand{\modelname}{cifar100\_densenet}
		\subfloat{\includegraphics[width=\imgS\linewidth]{\dir/\model/profiler_time}}
		\renewcommand{\model}{fashion_effb1}
		\renewcommand{\modelname}{fashion\_effb1}
		\subfloat{\includegraphics[width=\imgS\linewidth]{\dir/\model/profiler_time}}
		\renewcommand{\model}{svhn_wrn}
		\renewcommand{\modelname}{svhn\_wrn}
		\subfloat{\includegraphics[width=\imgS\linewidth]{\dir/\model/profiler_time}}
		\caption{Profiling of a single iteration of PoNoS. We report the time of the 1st and second forward passes, the time of all the extra forward passes beyond the second, the time of the backward pass and the time to load the mini-batch in the GPU.}\label{fig:profiling}
	\end{minipage}
\end{figure}

\end{document}